\newtheorem{same}{This should never appear}[section]
\newtheorem{defin}[same]{Definition}
\newtheorem{remark}[same]{Remark}
\newtheorem{theorem}[same]{Theorem}
\newtheorem{lemma}[same]{Lemma}
\newtheorem{fact}[same]{Fact}
\newtheorem{cor}[same]{Corollary}
\newtheorem{prop}[same]{Proposition}
\newtheorem{hypothesis}[same]{Hypothesis}
\newtheorem{nota}[same]{Notation}
\newtheorem{defin*}{Definition}
\newtheorem*{theorem*}{Theorem}
\newcommand{\skipitems}[1]{%
  \addtocounter{\@enumctr}{#1}%
}
\newcommand{\bb}{\mathbf{b}}
\newcommand{\rest}{\mathord{\upharpoonright}}
\newcommand{\id}{\textrm{id}}
\newcommand{\K}{\mathbf{K}}
\newcommand{\Ka}{\K^{T}}
\newcommand{\LS}{\operatorname{LS}}
\newcommand{\leap}[1]{\le_{#1}}
\newcommand{\lea}{\leap{\K}}
\newcommand{\gtp}{\mathbf{gtp}}
\newcommand{\gS}{\mathbf{gS}}
\newcommand{\tupop}{\textup{(}}
\newcommand{\tupcp}{\textup{)}}
\DeclareMathOperator{\cof}{cf}    
\title{Superstability, noetherian rings and pure-semisimple rings}
\date{\today.} 
\author{Marcos Mazari-Armida}
\email{mmazaria@andrew.cmu.edu}
\urladdr{http://www.math.cmu.edu/~mmazaria/ }
\address{Department of Mathematical Sciences \\ Carnegie Mellon
University \\ Pittsburgh, Pennsylvania, USA}
\begin{document}

\begin{abstract}

We uncover a connection between the model-theoretic notion of superstability and that of noetherian rings and pure-semisimple rings.

We characterize noetherian rings via superstability  of the class of left modules with embeddings.

\begin{theorem}
For a ring $R$ the following are equivalent.
\begin{enumerate}\item $R$ is left noetherian.
\item The class of left $R$-modules with embeddings is superstable.
\item For every $\lambda \geq |R| + \aleph_0$, there is $\chi \geq \lambda$ such that the class of left $R$-modules with embeddings  has uniqueness of limit models of cardinality $\chi$.
 \item Every limit model in the class of left $R$-modules with embeddings is $\Sigma$-injective. 
\end{enumerate}

\end{theorem}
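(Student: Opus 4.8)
The plan is to run the cycle $(1)\Rightarrow(2)\Rightarrow(3)\Rightarrow(4)\Rightarrow(1)$, using throughout that $\K=(R\text{-}\mathrm{Mod},\subseteq)$ is an AEC with amalgamation, joint embedding and no maximal models (amalgamation because a pushout of monomorphisms of modules is again a monomorphism in an abelian category) and that $\LS(\K)=|R|+\aleph_0$. Two preliminary observations organize everything. First, for each $M\in\K$ and each $\lambda\ge|M|+\LS(\K)$ there is a universal extension of $M$ of size $\lambda$: amalgamate over $M$ a set of representatives of all extensions of $M$ of size $\le\lambda$, the amalgamated sum existing because pushouts preserve embeddings; iterating along an ordinal $\delta$ produces the $(\lambda,\delta)$-limit models. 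Second, the bridge to algebra: a limit model $M_\delta=\bigcup_{i<\delta}M_i$ with $\cof(\delta)>|R|$ is injective. Indeed, given a left ideal $I$ and $f\colon I\to M_\delta$, the image has size $\le|R|<\cof(\delta)$, so it lies in some $M_i$; encoding $f$ by the one-point extension $N_f=(R\oplus M_i)/\{(a,-f(a)):a\in I\}\supseteq M_i$ and using universality of $M_{i+1}$ over $M_i$, the map $f$ extends to $R\to M_{i+1}\subseteq M_\delta$, so Baer's criterion applies.

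For $(1)\Rightarrow(2)$ I would assume $R$ left noetherian and show that every limit model is injective, in fact $\Sigma$-injective, which forces their uniqueness and hence superstability. The key input is the Bass--Papp--Faith theorem: $R$ is left noetherian iff every direct sum of injective left modules is injective iff every injective left module is $\Sigma$-injective. Combined with the preliminary observations, over a noetherian ring injective hulls are $\Sigma$-injective, nonsplitting acquires the local character defining superstability, and limit models of all lengths are pinned to a single $\Sigma$-injective object; I would package this through the standard characterization of superstability via local character of $\mu$-splitting.

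For $(2)\Rightarrow(3)$ I would invoke the established implication, for superstable AECs with amalgamation, from superstability to uniqueness of limit models in a tail (hence cofinally many) cardinals $\chi$. For $(3)\Rightarrow(4)$, fix such $\chi$: a long limit model of size $\chi$ is injective by the preliminary observation, so uniqueness makes every limit model of size $\chi$ injective; to promote injective to $\Sigma$-injective I would argue that $M^{(\aleph_0)}$ is again a limit model of size $\chi$, whence $M^{(\aleph_0)}\cong M$ is injective and $M$ is $\Sigma$-injective, with limit models in other cardinalities handled by the cofinality of the set of good $\chi$. Finally, for $(4)\Rightarrow(1)$, given any injective $E$ I would embed it by universality into a long, hence injective, limit model $M$; since $E$ is injective it is a direct summand of $M$, and direct summands of $\Sigma$-injective modules are $\Sigma$-injective, so $E$ is $\Sigma$-injective. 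Thus every injective left module is $\Sigma$-injective and $R$ is noetherian by Bass--Papp--Faith.

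The main obstacle I anticipate is the $\Sigma$-injectivity upgrade inside $(3)\Rightarrow(4)$: turning uniqueness of limit models at a single cardinal into $\Sigma$-injectivity is the genuinely delicate point, since one must either verify that $M^{(\aleph_0)}$ really is a limit model, or extract two non-isomorphic limit models from a failure of $\Sigma$-injectivity (via an infinite ascending chain of $M$-annihilators in Faith's criterion). By contrast $(1)\Rightarrow(2)$ is technically heaviest but routine once Bass--Papp--Faith is in hand, and $(4)\Rightarrow(1)$ is the cleanest step, resting only on injectivity of long limit models and closure of $\Sigma$-injectivity under direct summands.
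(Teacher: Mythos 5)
Your overall strategy is the paper's: identify long limit models with injective modules via Baer's criterion, use Bumby's theorem to get uniqueness from injectivity, use the $M^{(\aleph_0)}\cong M$ trick to upgrade injective to $\Sigma$-injective, and close the loop with Bass--Papp--Faith. Your $(4)\Rightarrow(1)$ is clean and essentially matches the paper's argument (the paper routes it through $\Sigma$-pure-injectivity, but the direct-summand argument you give works). However, there are two genuine gaps. The first is your preliminary existence claim. ``Amalgamate over $M$ a set of representatives of all extensions of $M$ of size $\le\lambda$'' does not produce a model of size $\lambda$: there can be up to $2^{\lambda}$ pairwise non-isomorphic-over-$M$ extensions, so the amalgam is far too large. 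Worse, the conclusion itself is false for general rings: the paper shows (Corollary \ref{st-no} together with the proof of $(6)\Rightarrow(1)$ of Theorem \ref{main2}) that $(R\text{-Mod},\subseteq_R)$ is $\lambda$-stable if and only if $\lambda^{<\gamma_R}=\lambda$, and for non-noetherian $R$ this fails at $\lambda=\beth_\omega(|R|+\aleph_0)$, so there are \emph{no} limit models of that cardinality. The correct input is Eklof's theorem (Fact \ref{uni-mod}): an injective universal model of size $\lambda$ exists exactly when $\lambda^{<\gamma_R}=\lambda$, which holds cofinally (and, under noetherianity, everywhere); universal extensions over a given $E$ then come from $E\oplus U$ for $E$ injective (Proposition \ref{easyp}). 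Since you cannot assume noetherianity in steps $(3)\Rightarrow(4)$ and $(4)\Rightarrow(1)$, you must quote existence only at these cofinally many cardinals.

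The second gap is the step you yourself flag in $(3)\Rightarrow(4)$, and it is worse than you anticipate because of your chosen cycle order. At a good cardinal $\chi$ the argument works: the long limit model $N$ is injective, Proposition \ref{count-n} identifies the $(\chi,\omega)$-limit with $N^{(\aleph_0)}$, and uniqueness forces $N^{(\aleph_0)}\cong N$, giving $\Sigma$-injectivity via Faith's criterion. But ``limit models in other cardinalities handled by the cofinality of the set of good $\chi$'' does not go through as stated: a limit model $M$ at a non-good cardinal $\lambda$ is merely a \emph{submodule} of the $\Sigma$-injective limit model at a good $\chi>\lambda$, and submodules of $\Sigma$-injective modules need not be $\Sigma$-injective (consider $\mathbb{Z}\subseteq\mathbb{Q}$). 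The transfer requires $M$ to be a direct summand, i.e.\ injective, which you only know for limit models of cofinality $\ge\gamma_R$; since noetherianity is the \emph{conclusion} of your cycle, you cannot rule out $\gamma_R>\aleph_0$ at this stage, and limit models of cofinality $\omega$ (or any cofinality below $\gamma_R$) are not covered. The paper avoids this by proving $(3)\Rightarrow(1)$ first --- working with an arbitrary injective module, which is automatically a direct summand of the big limit model --- and only afterwards deducing that all limit models are $\Sigma$-injective from noetherianity. Your argument can be repaired by reordering in the same way (your $(4)\Rightarrow(1)$ argument, applied to the injective universal model at a good $\chi$, already yields $(3)\Rightarrow(1)$), but as written the cycle has a hole at $(3)\Rightarrow(4)$.
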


We characterize left pure-semisimple rings via superstability of the class of left modules with pure embeddings.

\begin{theorem}
For a ring $R$ the following are equivalent.
\begin{enumerate}
\item $R$ is left pure-semisimple.
\item The class of left $R$-modules with pure embeddings is superstable.
\item There exists $\lambda \geq (|R| + \aleph_0)^+$ such that the class of left $R$-modules with pure embeddings has uniqueness of limit models of cardinality $\lambda$.
\item Every limit model in the class of left $R$-modules with pure embeddings is $\Sigma$-pure-injective. 
\end{enumerate}

\end{theorem}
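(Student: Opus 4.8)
The plan is to establish the cycle $(1) \Rightarrow (2) \Rightarrow (3) \Rightarrow (4) \Rightarrow (1)$, drawing on three ingredients: the general theory of superstable abstract elementary classes, the structure theory of limit models in the AEC $(R\text{-Mod}, \leq_p)$ of left $R$-modules with pure embeddings, and the classical algebraic characterization that $R$ is left pure-semisimple if and only if every left $R$-module is $\Sigma$-pure-injective, equivalently every left $R$-module satisfies the descending chain condition on pp-definable subgroups. Throughout I would use that $(R\text{-Mod}, \leq_p)$ is an AEC with $\LS = |R| + \aleph_0$ having amalgamation, joint embedding, and no maximal models, and that it is stable, so that limit models of each cardinality $\geq (|R|+\aleph_0)^+$ exist and are universal over their bases.

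For $(1) \Rightarrow (2)$, assuming $R$ left pure-semisimple, every module is $\Sigma$-pure-injective, so in particular every limit model is. I would then invoke the characterization of superstability of this class in terms of $\Sigma$-pure-injective limit models: the uniform DCC on pp-definable subgroups is precisely what rules out the long splitting chains obstructing superstability, so the class is superstable. For $(2) \Rightarrow (3)$ I would appeal to the standard fact that superstable AECs with amalgamation, joint embedding and no maximal models have uniqueness of limit models in every cardinal above the Löwenheim--Skolem number; this yields $(3)$ for every $\lambda \geq (|R|+\aleph_0)^+$, in particular for some such $\lambda$.

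For $(3) \Rightarrow (4)$, fix a $\lambda$ witnessing $(3)$. The high-cofinality limit model of size $\lambda$ is saturated, hence pure-injective; the content of uniqueness of limit models at $\lambda$---that limit models of all cofinalities agree up to isomorphism---is exactly the condition forcing the relevant unions of saturated models to remain saturated, and for modules this upgrades pure-injectivity to $\Sigma$-pure-injectivity. Since the module AEC is tame, uniqueness at a single $\lambda$ above the Löwenheim--Skolem number propagates to superstability and hence to every cardinal, so every limit model is $\Sigma$-pure-injective. Finally, for $(4) \Rightarrow (1)$: because limit models are universal, every module of size $\leq \lambda$ purely embeds into the limit model of size $\lambda$, and pure submodules of $\Sigma$-pure-injective modules are again $\Sigma$-pure-injective (a strictly descending chain of pp-definable subgroups in a pure submodule $N \leq_p M$ lifts, via $\phi(N) = \phi(M)\cap N^n$, to a strictly descending chain in $M$); hence every module of size $\leq \lambda$ is $\Sigma$-pure-injective. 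If $R$ failed to be pure-semisimple, some module would violate DCC on pp-definable subgroups, and choosing witnesses $a_i \in \phi_i(M)\setminus \phi_{i+1}(M)$ and closing under a pure submodule of size $\leq |R|+\aleph_0 \leq \lambda$ by downward Löwenheim--Skolem produces a small counterexample, contradicting the previous sentence. Thus $R$ is left pure-semisimple.

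The main obstacle is the bridge between the model-theoretic and algebraic sides, concentrated in $(3) \Rightarrow (4)$ and symmetrically in $(1) \Rightarrow (2)$: one must show that uniqueness of limit models, or superstability, forces the full strength of $\Sigma$-pure-injectivity (the descending chain condition on pp-definable subgroups) rather than merely pure-injectivity (algebraic compactness). This is exactly the distinction between superstability and stability on the algebraic side, and I expect the delicate point to be verifying that the saturated-chains-are-saturated behavior characteristic of superstability translates into the chain condition; the remaining bookkeeping (universality of limit models, the pure-submodule inheritance, and the Löwenheim--Skolem descent to a small witness) is comparatively routine.
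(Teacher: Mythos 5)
Your skeleton (the cycle $(1)\Rightarrow(2)\Rightarrow(3)\Rightarrow(4)\Rightarrow(1)$) is workable, and your $(4)\Rightarrow(1)$ is essentially the paper's argument (modulo choosing $\lambda$ with $\lambda^{|R|+\aleph_0}=\lambda$ so that limit models of size $\lambda$ actually exist: your blanket claim that the class is stable in every cardinal above $(|R|+\aleph_0)^+$ is false for a general ring, and is in fact equivalent to pure-semisimplicity by item (6) of Theorem \ref{main}). But the two bridge implications are asserted rather than proved, and each is missing its key idea. For $(1)\Rightarrow(2)$ you ``invoke the characterization of superstability of this class in terms of $\Sigma$-pure-injective limit models,'' which is circular --- that characterization is a form of what the theorem asserts --- and the appeal to ``DCC ruling out long splitting chains'' is not an argument in the limit-model formulation of superstability used here. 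The actual mechanism is the Schr\"{o}der--Bernstein property for pure-injective modules (Fact \ref{ipi}): any two limit models of the same cardinality purely embed into each other by universality (Fact \ref{univ}), so once they are pure-injective they are isomorphic (Corollary \ref{easy}); existence of limit models in every $\lambda\geq|R|+\aleph_0$ comes from total transcendence of $\tilde{T}$ (Fact \ref{sigma-stability} together with Lemma \ref{stable}). Nothing in your sketch supplies either ingredient.

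The more serious gap is $(3)\Rightarrow(4)$. You say that uniqueness of limit models at one $\lambda$ ``upgrades pure-injectivity to $\Sigma$-pure-injectivity'' via saturated unions, but you never exhibit the device that does the upgrading. The paper's device is Fact \ref{countablelim}: because the class is closed under direct sums and $E\oplus U$ is universal over $E$ when $E$ is pure-injective and $U$ is universal, the $(\lambda,\omega)$-limit model is isomorphic to $N^{(\aleph_0)}$, where $N$ is the $(\lambda,(|R|+\aleph_0)^+)$-limit model. Uniqueness then gives $N\cong N^{(\aleph_0)}$, and since $N$ is pure-injective (Fact \ref{bigpi}) this is literally $\Sigma$-pure-injectivity of $N$. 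To pass from one cardinal to \emph{every} limit model one uses that all limit models are elementarily equivalent (Fact \ref{limeq}) and that $\Sigma$-pure-injectivity is preserved under elementary equivalence (Fact \ref{pp-inj}); your proposed alternative --- tameness propagating uniqueness ``to every cardinal'' --- is not available, since the general tame-AEC transfer theorems only yield a tail starting near $\beth_{(2^{|R|+\aleph_0})^+}$, and lowering that threshold is precisely one of the points of the paper. Likewise, your $(2)\Rightarrow(3)$ needs only the trivial observation that a tail of cardinals meets $[(|R|+\aleph_0)^+,\infty)$; the stronger ``standard fact'' you cite (uniqueness in every cardinal above $\LS(\K)$) is not standard and is part of what is being proved.
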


Both equivalences provide evidence that the notion of superstability could shed light in the understanding of algebraic concepts.

As this paper is aimed at model theorists and algebraists an effort
was made to provide the background for both.
\end{abstract}


\maketitle

{\let\thefootnote\relax\footnote{{AMS 2010 Subject Classification:
Primary: 03C48, 16B70. Secondary: 03C45, 03C60, 13L05, 16P40, 16D10.
Key words and phrases. Superstability; Noetherian ring; Pure-semisimple ring; Limit models; Abstract Elementary
Classes.}}}

\tableofcontents

\section{Introduction}


 An abstract elementary class (AEC) is a pair $\K=(K, \lea)$, where $K$ is class of structures and $\lea$ is a strong substructure relation extending the substructure relation. Among the requirements we have that an AEC is closed under directed colimits and that every set is contained in a small model in the class (see Definition \ref{aec-def}). These were introduced by Shelah in \cite{sh88}. Natural examples in the context of algebra are abelian groups with embeddings, torsion-free abelian groups with pure embeddings, first-order axiomatizable classes of modules with pure embeddings and flat modules with pure embeddings.\footnote{The first two classes were studied in \cite{grp}, \cite{baldwine} and \cite{maz}, the next ones were studied in \cite{kuma} and the last one was studied in \cite{lrv} and \cite{maz1}.}

Dividing lines in complete first-order theories were introduced by Shelah in the late sixties and early seventies. One of the best behaved classes is that of \emph{superstable} theories. Extensions of superstability in a non-elementary setting were first considered in \cite{grsh}. In the context of AECs, superstability was introduced in \cite{sh394} and until recently it was believed to suffer from ``schizophrenia" \cite[p. 19]{shelahaecbook}.  In \cite[1.3]{grva} and \cite{vaseyt}, it was shown (under additional hypothesis that are satisfied by the classes studied in this paper\footnote{The hypothesis are amalgamation, joint embedding, no maximal models and $\LS(\K)$-tameness.})  that superstability is a well-behaved concept and many conditions that were believed to characterize superstability were found to be equivalent. Based on this and the key role \emph{limit models} play in this paper, we will say that an AEC is \emph{superstable} if it has uniqueness of limit models in a tail of cardinals.\footnote{For a complete first-order theory $T$, $(Mod(T), \preceq)$ is superstable if and only if $T$ is superstable as a first-order theory, i.e., $T$ is $\lambda$-stable as for every $\lambda \geq 2^{|T|}$.} Intuitively the reader can think of limit models as universal models with some level of homogeneity (see Definition \ref{limit}).  Further details on the development of the notion of superstability can be consulted in the introduction of \cite{grva}.

 In this paper, we show that the notion of superstability has algebraic substance if one chooses the right context. More specifically, we characterize noetherian rings and pure-semisimple rings via superstability in certain classes of modules with embeddings and with pure embeddings respectively.

\emph{Noetherian rings} are rings with the ascending chain condition for ideals. The precise equivalence we obtain is the following\footnote{Conditions (4) through (6) of the theorem below were motivated by \cite[1.3]{grva}.}.

\textbf{Theorem \ref{main2}.} \textit{For a ring $R$ the following are equivalent.
\begin{enumerate}
\item $R$ is left noetherian.
\item The class of left $R$-modules with embeddings is superstable.
\item For every $\lambda \geq |R| + \aleph_0$, there is $\chi \geq \lambda$ such that the class of left $R$-modules with embeddings  has uniqueness of limit models of cardinality $\chi$.
\item   For every $\lambda \geq |R| + \aleph_0$,  the class of left $R$-modules with embeddings  has uniqueness of limit models of cardinality $\lambda$.
\item   For every $\lambda \geq (|R| + \aleph_0)^+$,  the class of left $R$-modules with embeddings has a superlimit of cardinality $\lambda$.
 \item For every $\lambda \geq |R| + \aleph_0$,  the class of left $R$-modules with embeddings is stable.
 \item Every limit model in the class of left $R$-modules with embeddings is $\Sigma$-injective. 
\end{enumerate}}
\textit{If $R$ is left coherent, they are further equivalent to:
\begin{enumerate}
\skipitems{7}
\item (Lemma \ref{absol}) The class of left absolutely pure modules with embeddings is superstable.
\end{enumerate}}

 A ring $R$ is \emph{left pure-semisimple} if every left $R$-module is pure-injective. It was pointed out to us by Daniel Simson that pure-semisimple rings were introduced by him in \cite{simson77}. There are many papers where several characterizations of pure-semisimple rings are obtained, for example  \cite{cha}, \cite{auslander1}, \cite{auslander}, \cite{zim}, \cite{simson81} and \cite{prest84}. For additional information on what is known about pure-semisimple rings the reader can consult \cite{simson00}, \cite{hui} and \cite[\S 4.5.1]{prest09}. 

In this paper, we give several new characterizations of left pure-semimple rings via superstability. More precisely, we show\footnote{Conditions (4) through (7) of the theorem below were motivated by \cite[1.3]{grva}.}.

\textbf{Theorem \ref{main}.}
\textit{For a ring $R$ the following are equivalent.
\begin{enumerate}
\item $R$ is left pure-semisimple.
\item The class of left $R$-modules with pure embeddings is superstable.
\item There exists $\lambda \geq (|R| + \aleph_0)^+$ such that the class of left $R$-modules with pure embeddings has uniqueness of limit models of cardinality $\lambda$.
\item  For every $\lambda \geq|R| + \aleph_0$,  the class of left $R$-modules with pure embeddings  has uniqueness of limit models of cardinality $\lambda$.
\item For every $\lambda \geq (|R| + \aleph_0)^+$,  the class of left $R$-modules with pure embeddings  has a superlimit of cardinality $\lambda$.
\item  For every $\lambda \geq |R| + \aleph_0$,  the class of left $R$-modules with pure embeddings  is $\lambda$-stable.
\item  For every $\lambda \geq (|R| + \aleph_0)^+$, an increasing chain of $\lambda$-saturated models is $\lambda$-saturated in the class of left $R$-modules with pure embeddings.
\item There exists $\lambda \geq  (|R| + \aleph_0)^+$ such that the class of left $R$-modules with pure embeddings has a $\Sigma$-pure-injective universal model of cardinality $\lambda$.
\item Every limit model in the class of left $R$-modules with pure embeddings is $\Sigma$-pure-injective. 
\end{enumerate}}

A key difference between our results and those of \cite[1.3]{grva} is that in \cite{grva} the cardinal where the \emph{nice property} starts to show up is eventual (bounded by $\beth_{(2^{ |R|+\aleph_0})^+}$), while in our case the cardinal is exactly $|R|+\aleph_0$ or $(|R|+\aleph_0)^+$. In the introduction of \cite{grva} is asked if it was  possible to lower these bounds (see Theorem \ref{equivalent} and the remark below it). 

Although the results of Theorem \ref{main2} and Theorem \ref{main} are similar, the techniques used to prove the results differ significantly. The proof of Theorem \ref{main2} is more rudimentary and depends heavily on the fact that we are working with the class of all modules. On the other hand, Theorem \ref{main} is a corollary of the theory of superstable classes of modules with pure embeddings closed under direct sums which is developed in the fourth section of the paper. One could give a proof of Theorem \ref{main} similar to that of  Theorem \ref{main2}, but we think that the theory of superstable classes of modules with pure embeddings is an interesting theory that should be developed.

Another result of the paper is a positive solution above $\LS(\K)^+$ to Conjecture 2 of \cite{bovan} in the case of classes of modules axiomatizable in first-order with joint embedding and amalgamation (see Theorem \ref{lim-big}). This also provides a partial solution to Question 4.12 of \cite{kuma}.

Algebraically the key idea is to identify limit models with well-understood classes of modules. First, we show that \textit{long} limit models in the class of modules with  embeddings are injective modules (see Lemma 3.7) and that \textit{long} limit models in the class of modules with pure embeddings are pure-injective modules (see Fact \ref{bigpi}) for arbitrary rings. Then by assuming that the ring is noetherian or pure-semisimple we show that all limit models are $\Sigma$-injective (see Theorem \ref{main2}.(7)) or $\Sigma$-pure-injective (see Theorem \ref{main}.(9)) respectively. From these characterizations one can obtain the equivalence with superstability.

The paper is divided into four sections. Section 2 presents necessary background. Section 3  provides a new characterization of noetherian rings. Section 4  characterizes superstability with pure embeddings in classes of modules closed under direct sums and provides a new characterization of pure-semisimple rings. Moreover, a positive solution above $\LS(\K)^+$ to Conjecture 2 of \cite{bovan} is given for certain classes of modules.

It was pointed out to us by Vasey that already in \cite{shlaz} Shelah noticed some connections between superstability of the theory of modules, noetherian rings and pure-semisimple rings. Regarding noetherian rings, Shelah has a remark on page 299 immediately after an unproven theorem (Theorem 8.6) that indicates that he knew that superstability of the theory of modules implies that the ring is left noetherian. The precise equivalence he noticed is similar to that of (6) implies (1) of Theorem \ref{main2}, but the equivalence of (1) and (6) in Theorem 3.12 is new. As for pure-semisimple rings, Shelah claims in Theorem 8.7 that superstability of the theory of modules is equivalent to the ring being pure-semisimple (without mentioning pure-semisimple rings). Shelah fails to provide a proof that if a ring $R$ is pure-semisimple then the theory of $R$-modules is superstable  ((2) to (1) of his Theorem 8.7).  The precise equivalence he noticed is similar to that of (1) and (6) of Theorem \ref{main}.  As the theory of modules is not a complete first-order theory, it is unclear the precise notion of superstability that Shelah refers to in his paper, but he seems to be working with syntactic superstability with respect to positive primitive formulas.

This paper was written while the author was working on a Ph.D. under the direction of Rami Grossberg at Carnegie Mellon University and I would like to thank Professor Grossberg for his guidance and assistance in my research in general and in this work in particular. After reading a preliminary preprint, Sebastien Vasey informed us that he independently discovered the equivalence between superstability and noetherian rings (the equivalence between (1) and (2) of Theorem \ref{main2}), but has not circulated it yet. His proof follows from \cite[3.7]{vaseyu}, \cite[Proposition 3]{eklof} and \cite[5.9]{vaseyt}.  I thank an anonymous referee for comments on another paper of mine that prompted the development of Subsection 4.4. I would also like to thank John T. Baldwin, Daniel Simson, Sebastien Vasey and a couple of referees for comments that helped improve the paper. Finally, I would like to dedicate this work to Marquititos, you will always be loved and remembered.

\section{Preliminaries}

We present the basic concepts of abstract elementary classes that are used in this paper. These are further studied in \cite[\S 4 - 8]{baldwinbook09} and  \cite[\S 2, \S 4.4]{ramibook}. An introduction from an algebraic perspective is given in \cite[\S 2]{maztor}. Regarding the background on module theory, we give a brief survey of the concepts we will use in this paper. An excellent resource for the module theory we will use in this paper are \cite{prest} and \cite{prest09}.

\subsection{Basic concepts}
Abstract elementary classes (AECs) were introduced by Shelah in
\cite[1.2]{sh88}. Among the requirements we have that an AEC is closed under directed colimits and that every set is contained in a small model in the class. Given a model $M$, we
will write $|M|$ for its underlying set and $\| M \|$ for its
cardinality.

\begin{defin}\label{aec-def}
  An \emph{abstract elementary class} is a pair $\K = (K, \lea)$,
where:

  \begin{enumerate}
    \item $K$ is a class of $\tau$-structures, for some fixed
language $\tau = \tau (\K)$. 
    \item $\lea$ is a partial order on $K$. 
    \item $(K, \lea)$ respects isomorphisms: 
    
    If $M \lea N$ are in $K$
and $f: N \cong N'$, then $f[M] \lea N'$. 

In particular 
\tupop taking $M =
N$\tupcp, $K$ is closed under isomorphisms.
    \item If $M \lea N$, then $M \subseteq N$. 
    \item Coherence: If $M_0, M_1, M_2 \in K$ satisfy $M_0 \lea M_2$,
$M_1 \lea M_2$, and $M_0 \subseteq M_1$, then $M_0 \lea M_1$.
    \item Tarski-Vaught axioms: Suppose $\delta$ is a limit ordinal
and $\{ M_i \in K : i < \delta \}$ is an increasing chain. Then:

        \begin{enumerate}

            \item $M_\delta := \bigcup_{i < \delta} M_i \in K$ and
$M_i \lea M_\delta$ for every $i < \delta$.
            \item\label{smoothness-axiom}Smoothness: If there is some
$N \in K$ so that for all $i < \delta$ we have $M_i \lea N$, then we
also have $M_\delta \lea N$.

        \end{enumerate}

    \item L\"{o}wenheim-Skolem-Tarski axiom: There exists a cardinal
$\lambda \ge |\tau(\K)| + \aleph_0$ such that for any $M \in K$ and
$A \subseteq |M|$, there is some $M_0 \lea M$ such that $A \subseteq
|M_0|$ and $\|M_0\| \le |A| + \lambda$. We write $\LS (\K)$ for the
minimal such cardinal.
  \end{enumerate}
\end{defin}

\begin{nota}\
\begin{itemize}

\item If $\lambda$ is a cardinal and $\K$ is an AEC, then $\K_{\lambda}=\{ M \in \K : \| M \|=\lambda \}$.

\item Let $M, N \in \K$. If we write ``$f: M \to N$" we assume that
$f$ is a $\K$-embedding, i.e., $f: M \cong f[M]$ and $f[M] \lea N$.
In particular $\K$-embeddings are always monomorphisms.

\item  Let $M, N \in \K$ and $A \subseteq M$.  If we write ``$f : M \xrightarrow[A]{} N$"  we assume that
$f$ is a $\K$-embedding and that $f\rest_A=\id_{A}$.
\end{itemize}
\end{nota}

  In \cite{sh300} Shelah introduced a notion of semantic type. The
original definition was refined and extended by many authors who
following \cite{grossberg2002} call
these semantic types Galois-types (Shelah recently named them orbital
types).
We present here the modern definition and call them Galois-types
throughout the text. We follow the notation of \cite[2.5]{mv}.

\begin{defin}\label{gtp-def}
  Let $\K$ be an AEC.
  
  \begin{enumerate}
    \item Let $\K^3$ be the set of triples of the form $(\bb, A, N)$, where $N \in \K$, $A \subseteq |N|$, and $\bb$ is a sequence of elements from $N$. 
    \item For $(\bb_1, A_1, N_1), (\bb_2, A_2, N_2) \in \K^3$, we say $(\bb_1, A_1, N_1)E_{\text{at}} (\bb_2, A_2, N_2)$ if $A := A_1 = A_2$, and there exists $f_\ell : N_\ell \xrightarrow[A]{} N$ for $\ell \in\{1, 2\}$ such that $f_1 (\bb_1) = f_2 (\bb_2)$.
    \item Note that $E_{\text{at}}$ is a symmetric and reflexive relation on $\K^3$. We let $E$ be the transitive closure of $E_{\text{at}}$.
    \item For $(\bb, A, N) \in \K^3$, let $\gtp_{\K} (\bb / A; N) := [(\bb, A, N)]_E$. We call such an equivalence class a \emph{Galois-type}. Usually, $\K$ will be clear from context and we will omit it.
\item For $M \in \K$, $\gS_{\K}(M)= \{  \gtp_{\K}(b / M; N) : M
\leq_{\K} N\in \K \text{ and } b \in N\} $.
\item For $\gtp_{\K} (\bb / A; N)$ and $C \subseteq A$, $\gtp_{\K} (\bb / A; N)\upharpoonright_{C}:= [(\bb, C, N)]_E$.

  \end{enumerate}
\end{defin}

\begin{defin} An AEC is \emph{$\lambda$-stable}  if for any $M \in
\K_\lambda$, $| \gS_{\K}(M) | \leq \lambda$. 
\end{defin}

\begin{remark}
 Recall that given $T$ a complete first-order theory and $A \subseteq M$ with $M$ a model of $T$, $S^T(A)$ is the set of complete first-order types with parameters in $A$. For a complete first-order theory $T$ and $\lambda\geq |T|$, $(Mod(T), \preceq)$ is $\lambda$-stable (where $ \preceq$ is the elementary substructure relation) if and only if  $T$ is $\lambda$-stable as a first-order theory, i.e., $|S^T(A)| \leq \lambda$ 
 for every $A \subseteq M$ where $|A|=\lambda$ and $M$ is a model of $T$.
\end{remark}

The following notion was isolated by  Grossberg and VanDieren in \cite{tamenessone}.

\begin{defin} 
$\K$ is \emph{$(< \kappa)$-tame} if for any $M \in \K$ and $p \neq q \in \gS(M)$,  there is $A \subseteq |M|$ such that $|A |< \kappa$ and $p\upharpoonright_{A} \neq q\upharpoonright_{A}$.
\end{defin}

 \subsection{Limit models, saturated models and superlimits} Before introducing the concept of limit model we recall the concept of universal extension.

\begin{defin}
$M$ is \emph{universal over} $N$ if and only if $N \lea M$, $\| M \|=\| N\| =\lambda$ and for any $N^* \in \K_\lambda$ such that $N \lea N^*$, there is $f: N^* \xrightarrow[N]{} M$. 
\end{defin}

With this we are ready to introduce limit models, they were originally introduced in \cite{kosh}.

\begin{defin}\label{limit}
Let $\lambda$ be an infinite cardinal and $\alpha < \lambda^+$ be a limit ordinal.  $M$ is a \emph{$(\lambda,
\alpha)$-limit model over} $N$ if and only if there is $\{ M_i : i <
\alpha\}\subseteq \K_\lambda$ an increasing continuous chain such
that $M_0 :=N$, $M_{i+1}$ is universal over $M_i$ for each $i <
\alpha$ and $M= \bigcup_{i < \alpha} M_i$. We say that $M$ is a $(\lambda, \alpha)$-limit model if there is $N \in
\K_\lambda$ such that $M$ is a $(\lambda, \alpha)$-limit model over
$N$. We say that $M$ is a limit model of cardinality $\lambda$ if there exists a limit ordinal 
$\alpha < \lambda^+$ such that $M$  is a $(\lambda,
\alpha)$-limit model.

\end{defin}

Observe that if $M$ is a $(\lambda, \alpha)$-limit model, then $M$ has cardinality $\lambda$. 

\begin{defin}
Let $\K$ be an AEC and $\lambda$ be a cardinal. $M \in \K$ is a \emph{universal model in
$\K_\lambda$} if $M \in \K_\lambda$ and if given any $N \in \K_\lambda$, there is $f: N \to M$ a $\K$-embedding.
\end{defin}

The following is a simple exercise, a proof is given in \cite[2.10]{maz}.
\begin{fact}\label{univ}
Let $\K$ be an AEC with joint embedding and amalgamation. If $M$ is a limit model of cardinality $\lambda$, then $M$ is a universal model in $\K_\lambda$.
\end{fact}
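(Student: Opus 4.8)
The plan is to unwind the definition of a limit model and reduce the whole statement to the universality of the first successor step of the defining chain. Since $M$ is a limit model of cardinality $\lambda$, I would fix a limit ordinal $\alpha < \lambda^+$ and an increasing continuous chain $\{M_i : i < \alpha\} \subseteq \K_\lambda$ witnessing that $M$ is a $(\lambda, \alpha)$-limit model, so that $M_{i+1}$ is universal over $M_i$ for each $i < \alpha$ and $M = \bigcup_{i<\alpha} M_i$. By the Tarski--Vaught axioms $M_1 \lea M$, and since $\lea$ is transitive, any $\K$-embedding of a model $N$ into $M_1$ composes with $M_1 \lea M$ to give a $\K$-embedding $N \to M$. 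Thus it suffices to show that an arbitrary $N \in \K_\lambda$ admits a $\K$-embedding into $M_1$, and for this the only tool I need is that $M_1$ is universal over $M_0$.

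To build such an embedding, first I would use joint embedding to find $P \in \K$ together with $\K$-embeddings $g : M_0 \to P$ and $h : N \to P$. Applying the L\"owenheim--Skolem--Tarski axiom to the set $g[M_0] \cup h[N]$, which has cardinality $\lambda$ (I may assume $\lambda \geq \LS(\K)$, as is the case for all cardinals carrying a limit model), produces $P_0 \lea P$ with $g[M_0] \cup h[N] \subseteq P_0$ and $\|P_0\| = \lambda$; coherence then gives $g[M_0] \lea P_0$ and $h[N] \lea P_0$. The key maneuver is to convert $P_0$ into a genuine strong extension of $M_0$: using the axiom that $\K$ respects isomorphisms, I would transport the structure of $P_0$ along a bijection $\sigma : P_0 \to N^*$ extending $g^{-1} : g[M_0] \to M_0$, obtaining $N^* \in \K_\lambda$ with $\sigma[g[M_0]] = M_0 \lea N^*$ (now literally a substructure) and with $\sigma \circ h : N \to N^*$ a $\K$-embedding.

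Finally, since $M_0 \lea N^*$ and $N^* \in \K_\lambda$, the universality of $M_1$ over $M_0$ yields a $\K$-embedding $f : N^* \xrightarrow[M_0]{} M_1$; precomposing with $\sigma \circ h$ gives a $\K$-embedding $N \to M_1$, and composing with $M_1 \lea M$ completes the argument. I expect the main obstacle to be precisely the isomorphism-transport step: joint embedding only places $M_0$ and $N$ inside a common model as images of embeddings, whereas the definition of universality of $M_1$ speaks about extensions $N^*$ in which $M_0$ sits as an actual $\lea$-submodel. Renaming elements so that $M_0 \lea N^*$ holds on the nose while preserving an embedding of $N$ is the one point requiring care; everything else is a direct application of the AEC axioms and the definitions of limit model and universal extension. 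It is worth noting that amalgamation plays no role in this direction---joint embedding suffices---though it is customarily assumed alongside it to guarantee that the universal extensions appearing in the chain exist at all.
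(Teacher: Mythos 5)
Your argument is correct and is essentially the standard proof that the paper delegates to the cited reference: joint embedding places $N$ and $M_0$ in a common model, a L\"owenheim--Skolem--Tarski cut-down plus coherence and an isomorphism-transport turn this into a genuine $\lea$-extension $N^*$ of $M_0$ of cardinality $\lambda$, and then universality of $M_1$ over $M_0$ together with transitivity of $\lea$ finishes the job. The only implicit convention is $\lambda \geq \LS(\K)$ (needed so the cut-down has size exactly $\lambda$), which is indeed in force wherever limit models appear in this paper, and your observation that amalgamation is not actually used in this direction is accurate.
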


The next fact gives conditions for the existence of limit models.

\begin{fact}[{\cite[\S II]{shelahaecbook}, \cite[2.9]{tamenessone}}]\label{existence}
Let $\K$ be an AEC with joint embedding, amalgamation and no maximal models. If $\K$ is $\lambda$-stable, then for every $N \in \K_\lambda$ and $\alpha < \lambda^+$ limit ordinal there is $M$ a $(\lambda, \alpha)$-limit model over $N$. Conversely, if $\K$ has a limit model of cardinality $\lambda$, then $\K$ is $\lambda$-stable
\end{fact}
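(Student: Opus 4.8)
The plan is to route both implications through the single notion of a universal extension of a model of size $\lambda$, and to isolate two lemmas. The first (realization) says that whenever $M' \in \K_\lambda$ is universal over $M \in \K_\lambda$, then $M'$ realizes every Galois type in $\gS_\K(M)$: given $p = \gtp_\K(b/M; N)$, I would first use the L\"owenheim--Skolem--Tarski axiom to replace $N$ by a $\lea$-submodel of size $\lambda$ containing $M$ and $b$ (so that $p$ is unchanged, by coherence and the inclusion witnessing $E_{\text{at}}$), then apply universality to get $f : N \to M'$ fixing $M$, so that $f(b)$ realizes $p$ in $M'$. Since $a \mapsto \gtp_\K(a/M; M')$ then maps $M'$ onto $\gS_\K(M)$, this yields $\card{\gS_\K(M)} \le \| M' \| = \lambda$. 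The second lemma (construction) says that $\lambda$-stability produces universal extensions: for $M \in \K_\lambda$ I would build a continuous increasing chain $\langle N_j : j \le \lambda \rangle$ in $\K_\lambda$ with $N_0 = M$, where each $N_{j+1}$ realizes every type in $\gS_\K(N_j)$; stability keeps $\card{\gS_\K(N_j)} \le \lambda$, so each $N_{j+1}$ can be found of size $\lambda$ by amalgamating $\lambda$-many one-type realizations, and $M' := N_\lambda$ has size $\lambda$.

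For the first (existence) assertion I would feed the construction lemma into the definition of a limit model. Given $N \in \K_\lambda$ and a limit ordinal $\alpha < \lambda^+$, I define an increasing continuous chain $\langle M_i : i < \alpha \rangle$ by $M_0 = N$, taking $M_{i+1}$ to be a universal extension of $M_i$ (construction lemma, using amalgamation and no maximal models to guarantee the proper extensions needed to realize types), and unions at limit stages; each such union stays in $\K_\lambda$ by the Tarski--Vaught axioms since $|i| \le \lambda$. Then $\bigcup_{i < \alpha} M_i$ is a $(\lambda, \alpha)$-limit model over $N$ directly by the definition.

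For the converse I would argue that a single limit model already forces stability at every model of size $\lambda$. Let $M = \bigcup_{i < \alpha} M_i$ be a $(\lambda, \alpha)$-limit model; since $\alpha \ge \omega$, the indices $1, 2$ exist. Because $M_2$ is universal over $M_1$, the realization lemma gives $\card{\gS_\K(M_1)} \le \lambda$. Next I would check that $M_1$ is universal in $\K_\lambda$: given any $P \in \K_\lambda$, joint embedding and amalgamation let me build $R \in \K_\lambda$ with $M_0 \lea R$ and a $\K$-embedding $P \to R$, and then universality of $M_1$ over $M_0$ sends $R$ into $M_1$ over $M_0$, carrying $P$ into $M_1$. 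Finally, for an arbitrary $N \in \K_\lambda$ I would fix such an embedding realizing $N$ as some $N' \lea M_1$ and use amalgamation to show the restriction map $\gS_\K(M_1) \to \gS_\K(N')$ is onto: any $q = \gtp_\K(c/N'; Q)$ extends to a type over $M_1$ by amalgamating $Q$ and $M_1$ over $N'$. Hence $\card{\gS_\K(N)} = \card{\gS_\K(N')} \le \card{\gS_\K(M_1)} \le \lambda$, so $\K$ is $\lambda$-stable.

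The main obstacle is the universality clause in the construction lemma: proving that $M' = N_\lambda$ is universal over $M$ requires a back-and-forth embedding of an arbitrary $N^* \in \K_\lambda$ extending $M$ into $M'$ over $M$, built by enumerating the $\lambda$-many new elements of $N^*$ and, at each step, using that the Galois type of the next element over the current image is realized at the next level $N_{j+1}$ of the chain. The bookkeeping that keeps the partial images confined to levels $N_{j(\xi)}$ with $j(\xi) < \lambda$ at limit stages, together with the verification that all intermediate models stay of size exactly $\lambda$, is the delicate part. By contrast, once universal extensions are available the two main implications are essentially formal consequences of the realization lemma and the surjectivity of restriction under amalgamation.
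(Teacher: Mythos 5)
The paper does not prove this statement; it is quoted as a known Fact with citations to Shelah's book and to Grossberg--VanDieren, and your outline is essentially the standard argument from those sources: a realization lemma for universal extensions, a construction of universal extensions from $\lambda$-stability via a chain of length $\lambda$ realizing all types, iteration to get $(\lambda,\alpha)$-limit models, and for the converse the observation that one universal link in the chain bounds $\card{\gS(M_1)}$ while joint embedding, amalgamation and universality over $M_0$ make $M_1$ universal in $\K_\lambda$ with surjective restriction of types. The one step you rightly flag as delicate --- the back-and-forth showing $N_\lambda$ is universal over $N_0$, including extending realizations over submodels of $N_{j(\xi)}$ via amalgamation and keeping $j(\xi)<\lambda$ at limits --- is exactly where the work lies in the cited proofs, so your sketch is correct and faithful to the intended argument.
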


The key question regarding limit models is the uniqueness of limit models of a given cardinality but with chains of different lengths. This has been studied thoroughly in the context of abstract elementary classes \cite{shvi}, \cite{van06}, \cite{grvavi}, \cite{extendingframes}, \cite{vand}, \cite{bovan} and \cite{vasey18}. 

\begin{defin}
$\K$ has \emph{uniqueness of limit models of cardinality $\lambda$} if $\K$ has a limit model of cardinality $\lambda$ and if any two limit models of cardinality $\lambda$ are isomorphic.
\end{defin}

 Since we will only deal with AECs with amalgamation, joint embedding, no maximal models and $\LS(\K)$-tame and it is known (by \cite[1.3]{grva} and \cite{vaseyt}) that in this context the definition below is equivalent to every other definition of superstability considered in the context of AECs, we introduce the following as the definition of superstability.

\begin{defin}
$\K$ is \emph{superstable} if and only if $\K$ has uniqueness of limit models in a tail of cardinals.
\end{defin}

\begin{remark}
For a complete first-order $T$, $(Mod(T), \preceq)$ is superstable if and only if $T$ is superstable as a first-order theory, i.e.,  $T$ is $\lambda$-stable for every $\lambda \geq 2^{|T|}$.
\end{remark} 

 It is important to point out that to establish that $\K$ has uniqueness of limit models of cardinality $\lambda$, one needs to show first the existence of limit models. Due to Fact \ref{existence}, this is equivalent to $\lambda$-stability. 

Another important class of models is that of saturated models.

\begin{defin}
$M \in \K$ is \emph{$\lambda$-saturated} if for every $N \lea M$ and $p \in \gS(N)$ with $\| N \| < \lambda$, there is $a \in M$ such that $p=\gtp(a/N; M)$. $M$ is saturated if $M$ is $\| M \|$-saturated.
\end{defin}

A model $M$ is \emph{$\lambda$-model-homogeneous} if for every $N, N' \in \K$ with $N \lea M$, $N \lea N'$ and $\| N' \| < \lambda$, there is $f: N' \xrightarrow[N]{} M$. Recall that for $\lambda > \LS(\K)$,  a model is $\lambda$-saturated if and only if it is $\lambda$-model-homogeneous. A proof of it appears in \cite[\S II.1.4]{shelahaecbook}.

Superlimit models were introduced in \cite[3.1.(1)]{sh88} as another possible notion of saturation on AECs.

\begin{defin} Let $\K$ be an AEC.
Let $M \in \K$ and $\lambda \geq \LS(\K)$. M is a superlimit in $\lambda$ if:
\begin{enumerate}
\item $M \in \K_\lambda$.
\item For every $N \in \K_\lambda$, there is $f: N \to M$ such that $f[N] \neq M$.
\item If $\{M_i : i < \delta \}\subseteq \K_\lambda$ is an increasing chain, $\delta < \lambda^+$ is an ordinal and $M_i \cong M$ for all $i < \delta$, then $\bigcup_{i < \delta} M_i \cong M$. 
\end{enumerate}
\end{defin}

The following fact has some known connections between limit models, saturated models and superlimits.

\begin{fact}[{\cite[2.8]{grva}, \cite[2.3.10]{dru}}]\label{limits} Let $\K$ be an AEC with amalgamation, joint embedding and no maximal models.
\begin{enumerate}
\item If $\lambda > \LS(\K)$ and $M$ is a $(\lambda, \alpha)$-limit model for $\alpha \in[ \LS(\K)^+, \lambda]$ a regular cardinal, then $M$ is an $\alpha$-saturated model.
\item Let $\lambda > \LS(\K)$ and $\K$ be $\lambda$-stable. $\K$ has uniqueness of limit models in $\lambda$ if and only if every limit model of cardinality $\lambda$ is saturated.
\item Let $\K$ be $\lambda$-stable. If $M$ is a superlimit of cardinality $\lambda$, then $M$ is a $(\lambda, \alpha)$-limit model for every $\alpha < \lambda^+$ limit ordinal. 
\item Let $\lambda > \LS(\K)$, $\K$ be $\lambda$-stable and assume there exists a saturated model of size $\lambda$. $\K$ has a superlimit of cardinality $\lambda$ if and only if the union of an increasing chain (of length less than $\lambda^+$) of saturated models in $\K_\lambda$ is saturated.
\end{enumerate}
\end{fact}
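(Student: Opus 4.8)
The plan is to handle the four items in turn, drawing on a common toolkit: the existence of universal extensions under $\lambda$-stability (which underlies Fact \ref{existence}), amalgamation to transport realizations of Galois-types, and the characterization of saturation as $\lambda$-model-homogeneity valid for $\lambda > \LS(\K)$. For (1), I would argue by bounding. Write $M = \bigcup_{i<\alpha} M_i$ with the chain continuous and $M_{i+1}$ universal over $M_i$. Given $N \lea M$ with $\|N\| < \alpha$ and $p = \gtp(a/N; N') \in \gS(N)$, regularity of $\alpha$ together with $\|N\| < \alpha$ forces $N \subseteq M_i$ for some $i$, whence $N \lea M_i$ by coherence (Definition \ref{aec-def}, clause (5)). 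Using amalgamation I amalgamate $N'$ with $M_i$ over $N$ inside some $N^* \in \K_\lambda$ extending $M_i$, and then invoke universality of $M_{i+1}$ over $M_i$ to embed $N^*$ into $M_{i+1}$ over $M_i$; the image of $a$ lies in $M_{i+1} \subseteq M$ and realizes $p$, since the composite embedding fixes $N$. Hence $M$ is $\alpha$-saturated.

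For (2), the backward direction is the routine one: two saturated models of cardinality $\lambda > \LS(\K)$ are isomorphic by a back-and-forth using $\lambda$-model-homogeneity, so if limit models exist (by $\lambda$-stability and Fact \ref{existence}) and every limit model of cardinality $\lambda$ is saturated, then any two are isomorphic. The forward direction reduces to exhibiting a single saturated limit model of cardinality $\lambda$, after which uniqueness forces all limit models to be saturated. When $\lambda$ is regular this is immediate from (1) with $\alpha = \lambda$, so that a $(\lambda,\lambda)$-limit is $\lambda$-saturated. The main obstacle I anticipate is the singular case: when $\cof(\lambda) < \lambda$ the bounding argument of (1) fails for submodels of size $\geq \cof(\lambda)$, so a $(\lambda,\lambda)$-limit is not visibly saturated, and one must instead construct a saturated model of size $\lambda$ separately (assembling it from saturated models of smaller regular cardinalities) and verify it is a limit model. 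This is the delicate point where I expect to rely on the cited arguments.

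For (3), I would realize the superlimit $M$ as a $(\lambda,\alpha)$-limit through a transfinite construction of copies of $M$. The key lemma is: for every $N \in \K_\lambda$ there is $M' \cong M$ with $N \lea M'$ and $M'$ universal over $N$. To prove it, take a universal extension $N^*$ of $N$ (available by $\lambda$-stability), embed $N^*$ into $M$ by universality of the superlimit, and transport the structure of $M$ along the isomorphism identifying $N$ with its image; universality is inherited because the image of $N^*$ is universal over the image of $N$ and sits inside $M$. Using this lemma I build a continuous increasing chain $\langle M_i : i < \alpha\rangle$ with $M_0 = M$, each $M_{i+1} \cong M$ universal over $M_i$, and at each limit stage $j$ the union $M_j \cong M$ by the defining closure property of superlimits (clause (3)) applied to the chain of copies of $M$ of length $j < \lambda^+$. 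Then $\bigcup_{i<\alpha} M_i$ is a $(\lambda,\alpha)$-limit and is $\cong M$ by the same closure property, so $M$ itself is a $(\lambda,\alpha)$-limit model for every limit $\alpha < \lambda^+$.

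For (4), I would first establish that under $\lambda$-stability a superlimit $M$ coincides with the saturated model of size $\lambda$. By (3), $M$ is a $(\lambda,\alpha)$-limit for every regular $\alpha \in [\LS(\K)^+, \lambda]$ not exceeding $\lambda$, hence $\alpha$-saturated by (1); since every submodel of size $< \lambda$ has size strictly below some such regular $\alpha$ (taking $\alpha = \lambda$ when $\lambda$ is regular, and using that $\lambda$ is a limit cardinal when singular), $M$ is $\lambda$-saturated, and the saturated model of size $\lambda$ is unique up to isomorphism. For the backward direction, assuming unions of increasing chains of length $< \lambda^+$ of saturated models of size $\lambda$ are saturated, I check the saturated model $S$ is a superlimit: it is universal, and properly so (embed a proper extension and restrict), while clause (3) holds because a chain of copies of $S$ is a chain of saturated models whose union is saturated, hence $\cong S$. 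For the forward direction, a superlimit $M \cong S$ makes any increasing chain of saturated models of size $\lambda$ into a chain of copies of $M$ of length $< \lambda^+$, so its union is $\cong M$ by the superlimit closure property and therefore saturated.
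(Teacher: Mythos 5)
This statement is quoted in the paper as a Fact with citations to \cite[2.8]{grva} and \cite[2.3.10]{dru}; the paper supplies no proof of its own, so there is nothing internal to compare against. Your arguments for (1), (3) and (4) are correct and are the standard ones: the cofinality/coherence argument plus amalgamation and universality for (1); the ``renamed universal extension'' lemma plus clause (3) of the superlimit definition for (3); and for (4) the identification of the superlimit with the saturated model via (1) and (3), together with the observation that a chain of saturated models is a chain of copies of the superlimit. The one genuine gap is in the forward direction of (2), which you explicitly leave open when $\lambda$ is singular, proposing to build a saturated model separately and verify it is a limit model.

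That detour is unnecessary, and you already have all the ingredients to close the gap. Suppose $\K$ has uniqueness of limit models in $\lambda$ and let $N$ be any limit model of cardinality $\lambda$. If $\lambda$ is regular, $N$ is isomorphic to the $(\lambda,\lambda)$-limit model, hence $\lambda$-saturated by (1). If $\lambda$ is singular, then $\lambda$ is a limit cardinal, so the regular cardinals $\mu^+$ with $\LS(\K)\le\mu<\lambda$ are cofinal in $\lambda$; by $\lambda$-stability and Fact \ref{existence} the $(\lambda,\mu^+)$-limit model exists for each such $\mu^+$, by uniqueness $N$ is isomorphic to it, and by (1) $N$ is $\mu^+$-saturated. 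Since every $N'\lea N$ with $\|N'\|<\lambda$ satisfies $\|N'\|<\mu^+$ for some such $\mu^+$, it follows that $N$ is $\lambda$-saturated. So the forward direction needs only (1) and uniqueness, with no separate construction of a saturated model; with this substitution your proof of the whole Fact is complete.
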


\subsection{Module theory} All rings considered in this paper are associative with an identity element. A module $M$ is \emph{injective} if and only if  for every module
$N$, if $M \leq N$ then $M$ is a direct summand of $N$. 
We say that $M$ is \emph{$\Sigma$-injective} if and only if $M^{(I)}$ is injective for every index set $I$. To consider only countable index sets one needs $M$ to be injective.

\begin{fact}[{\cite[Proposition 3]{faith}}]\label{c-inj} For $M$ an injective module the following are equivalent.
\begin{enumerate}
\item $M$ is $\Sigma$-injective.
\item $M^{(\aleph_0)}$ is injective.
\end{enumerate}
\end{fact}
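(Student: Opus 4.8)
The implication $(1) \Rightarrow (2)$ is immediate: take $I = \aleph_0$ in the definition of $\Sigma$-injectivity. All the content is in the converse, and the plan is to route it through the ascending chain condition on annihilators. Concretely, I would establish two implications. First, that injectivity of $M^{(\aleph_0)}$ forces $R$ to satisfy the ACC on the left ideals $\{\operatorname{ann}_R(S) : S \subseteq M\}$. Second, that this chain condition, together with injectivity of $M$, gives injectivity of $M^{(I)}$ for every index set $I$. The common tool in both steps is Baer's criterion: a module $Q$ is injective if and only if every homomorphism from a left ideal of $R$ into $Q$ extends to $R$.

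For the first implication I would argue by contraposition. Given a strictly ascending chain $\operatorname{ann}_R(S_1) \subsetneq \operatorname{ann}_R(S_2) \subsetneq \cdots$, after passing to $S_n := \bigcup_{k \geq n} S_k$ one may assume $S_1 \supseteq S_2 \supseteq \cdots$ while keeping the same (strictly ascending) annihilators. Choosing $r_n \in \operatorname{ann}_R(S_{n+1}) \setminus \operatorname{ann}_R(S_n)$ and a witness $m_n \in S_n$ with $r_n m_n \neq 0$, the inclusion $m_k \in S_{n+1}$ for $k > n$ forces $r_n m_k = 0$ for all $k > n$. The point is that $(m_1, m_2, \dots)$ lives in the product $M^{\aleph_0}$ but \emph{not} in the sum $M^{(\aleph_0)}$, and this is exactly the obstruction I would exploit. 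Setting $L = \{r \in R : r m_n = 0 \text{ for all but finitely many } n\}$, the assignment $f(r) = (r m_n)_n$ is a well-defined homomorphism $L \to M^{(\aleph_0)}$ with each $r_n \in L$. If $M^{(\aleph_0)}$ were injective, Baer's criterion would produce $x = (x_1, x_2, \dots) \in M^{(\aleph_0)}$ of finite support with $f(r) = rx$ for all $r \in L$; reading off the $n$-th coordinate at $r_n$ gives $r_n m_n = r_n x_n$, which vanishes for large $n$ since $x$ has finite support, contradicting $r_n m_n \neq 0$.

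For the second implication, fix an index set $I$ and a homomorphism $f : L \to M^{(I)}$ from a left ideal $L$; by Baer's criterion it suffices to extend $f$ to $R$. Each coordinate $f_i : L \to M$ extends, by injectivity of $M$, to left multiplication by some $x_i \in M$, so $f(r) = (r x_i)_{i \in I}$ on $L$. The crux is to show $f(L)$ lies in a finite sub-sum, i.e.\ that $L x_i = 0$ for all but finitely many $i$. If infinitely many $i_1, i_2, \dots$ had $L x_{i_n} \neq 0$, the tail annihilators $C_n := \operatorname{ann}_R(\{x_{i_k} : k \geq n\})$ would form an ascending chain, which by the ACC stabilizes at some $N$; choosing $r \in L$ with $r x_{i_N} \neq 0$ and using that $f(r)$ has finite support to place $r$ in $C_K = C_N \subseteq \operatorname{ann}_R(x_{i_N})$ for large $K$ then forces $r x_{i_N} = 0$, a contradiction. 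Once $f(L) \subseteq M^{(I_0)}$ with $I_0$ finite, the module $M^{(I_0)}$ is a finite direct sum of injectives, hence injective, so $f$ extends into $M^{(I_0)} \subseteq M^{(I)}$ and thus into $M^{(I)}$.

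The step I expect to be most delicate is the construction in the first implication: arranging the chain so that the elements $r_n, m_n$ satisfy $r_n m_k = 0$ precisely for $k > n$, and then isolating the left ideal $L$ together with the product-but-not-sum element that Baer's criterion can detect. By contrast, the ACC-to-$\Sigma$-injectivity direction is comparatively routine once one has the idea of bounding the support of $f(L)$ through a stabilizing tail chain of annihilators.
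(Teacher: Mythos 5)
Your argument is correct: both halves (injectivity of $M^{(\aleph_0)}$ forces the ascending chain condition on annihilators $\operatorname{ann}_R(S)$ for $S\subseteq M$, and that chain condition plus injectivity of $M$ bounds the support of $f(L)$ so that Baer's criterion applies to $M^{(I)}$) are carried out without gaps. The paper states this as a Fact and gives no proof, citing Faith's Proposition~3; your proof is essentially the classical argument from that reference, so there is nothing to compare beyond noting the match.
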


Recall that a formula $\phi$ is a positive primitive formula ($pp$-formula for short), if $\phi$ is an existentially quantified system of linear equations. Given $M$ and $N$ $R$-modules, $M$ is a \emph{pure submodule} of $N$, denoted by $M \leq_{pp} N$, if and only if $M$ is a submodule of $N$  and for every $pp$-formula $\phi$ it holds that $\phi[N] \cap M = \phi[M]$. Equivalently if for every $L$ right $R$-module $L \otimes M \to L \otimes N$ is a monomorphism.

($\Sigma$-)Pure-injective modules generalize the notion of ($\Sigma$-)injective modules.
A module $M$ is \emph{pure-injective} if in the definition of injective module one substitutes ``$\leq$" by ``$\leq_{pp}$". A module $M$ is  \emph{$\Sigma$-pure-injective} if in the definition of $\Sigma$-injective module one substitutes ``injective" for ``pure-injective". In the case of $\Sigma$-pure-injectivity it is enough to consider countable index sets.

\begin{fact}[{\cite[3.4]{zimm}}]
$M$ is \emph{$\Sigma$-pure-injective} if and only if $M^{(\aleph_0)}$ is pure-injective. 
\end{fact}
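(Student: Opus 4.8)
The forward direction is immediate: if $M$ is $\Sigma$-pure-injective then $M^{(I)}$ is pure-injective for every index set $I$, and taking $I=\aleph_0$ gives that $M^{(\aleph_0)}$ is pure-injective. So all the content lies in the converse, and the plan is to route it through the descending chain condition on $pp$-definable subgroups. For a $pp$-formula $\phi$ in $n$ free variables I write $\phi(N)=\{\bar b\in N^n : N\models\phi(\bar b)\}$, a subgroup of $N^n$, and I will prove the cycle of implications: $M$ is $\Sigma$-pure-injective $\Rightarrow$ $M^{(\aleph_0)}$ is pure-injective $\Rightarrow$ $M$ has the descending chain condition (DCC) on $pp$-definable subgroups $\Rightarrow$ $M$ is $\Sigma$-pure-injective, the first arrow being the trivial one just noted. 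Throughout I would use the standard facts that $pp$-formulas commute with direct sums and with direct products, i.e. $\phi(M^{(\aleph_0)})=\phi(M)^{(\aleph_0)}$ and $\phi(M^{\aleph_0})=\phi(M)^{\aleph_0}$.

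For the implication that $M^{(\aleph_0)}$ pure-injective forces the DCC I would argue by contraposition. Suppose there is a strictly descending chain $\phi_0(M)\supsetneq\phi_1(M)\supsetneq\cdots$; replacing $\phi_n$ by $\phi_0\wedge\cdots\wedge\phi_n$ I may assume $\phi_{n+1}(N)\subseteq\phi_n(N)$ in every module $N$. Pick $a_n\in\phi_n(M)\setminus\phi_{n+1}(M)$, set $s_k=(a_0,\dots,a_{k-1},0,0,\dots)\in M^{(\aleph_0)}$, and consider the $pp$-type $p(x)=\{\phi_k(x-s_k):k<\omega\}$ over $N:=M^{(\aleph_0)}$. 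Using the commutation facts one checks that $p$ is finitely satisfiable in $N$, since $s_{K+1}$ satisfies the first $K+1$ formulas, while $p$ is realized by $c=(a_0,a_1,\dots)$ in the product $M^{\aleph_0}$. If some $d\in N$ realized $p$, then $c-d$ would lie in $\phi_k(M)^{\aleph_0}$ for every $k$; comparing the $j$-th coordinate gives $a_j-d_j\in\phi_{j+1}(M)$, and since $d$ has finite support this forces $a_j\in\phi_{j+1}(M)$ for large $j$, contradicting the choice of $a_j$. Hence $p$ witnesses that $N$ is not algebraically compact, so $M^{(\aleph_0)}$ is not pure-injective.

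For the final implication I would invoke the algebraic compactness criterion: a module is pure-injective if and only if every set of $pp$-formulas in one free variable with parameters in the module that is finitely satisfiable is realized. Two observations then combine. First, the DCC transfers from $M$ to every $M^{(I)}$: since $\phi(M^{(I)})=\phi(M)^{(I)}$ and, for nonempty $I$, a strict inclusion $\phi(M)^{(I)}\subsetneq\psi(M)^{(I)}$ forces $\phi(M)\subsetneq\psi(M)$, any strictly descending chain of $pp$-definable subgroups of $M^{(I)}$ would pull back to one in $M$. Second, over any module $N$ with DCC on $pp$-definable subgroups every finitely satisfiable $pp$-$1$-type $p$ is realized: each $\phi(x,\bar a)\in p$ defines a coset of the $pp$-definable subgroup $\phi(x,\bar 0)(N)$, finite satisfiability says these cosets have the finite intersection property, and the DCC lets the finite intersections of the underlying subgroups stabilize, after which a standard coset argument produces a point common to all the cosets, i.e. a realization of $p$. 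Applying this with $N=M^{(I)}$ shows $M^{(I)}$ is pure-injective for every $I$, so $M$ is $\Sigma$-pure-injective.

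The genuinely substantive step is this last implication, and within it the coset-stabilization argument that converts a chain condition on subgroups into the solvability of an infinite, merely finitely-solvable system of $pp$-conditions; the bookkeeping that a finitely satisfiable $pp$-type is precisely a finite-intersection-property family of cosets, together with verifying that their total intersection is nonempty once the subgroup parts stabilize, is the crux. The transfer of the DCC to arbitrary direct sums $M^{(I)}$ is what upgrades pure-injectivity of $M$ to full $\Sigma$-pure-injectivity. By contrast the forward direction and the construction of the unrealized type in $M^{(\aleph_0)}$ are comparatively routine once one knows that $pp$-formulas commute with sums and products.
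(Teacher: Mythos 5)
Your argument is correct. The paper does not prove this statement at all --- it is quoted as a black-box Fact with a citation to Zimmermann --- so there is nothing internal to compare against; but your route through the descending chain condition on $pp$-definable subgroups (the unrealized finitely-satisfiable $pp$-type $\{\phi_k(x-s_k):k<\omega\}$ in $M^{(\aleph_0)}$ for one direction, and the coset-stabilization argument for the other) is exactly the standard proof underlying the cited result, and indeed the same DCC characterization the paper itself invokes immediately afterwards to justify Fact \ref{pp-inj}.
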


A module $M$ is \emph{absolutely pure} if every extension of $M$ is pure. The next fact relates $\Sigma$-injectivity and $\Sigma$-pure-injectivity.

\begin{fact}[{\cite[4.4.16]{prest09}}]\label{sigma-n} For $M$ an $R$-module the following are equivalent.
\begin{enumerate}
\item $M$ is $\Sigma$-injective.
\item $M$ is absolutely pure and $\Sigma$-pure-injective.
\end{enumerate}
\end{fact}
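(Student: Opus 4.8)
The plan is to prove both implications by reducing injectivity to pure-injectivity using absolute purity, and to absorb the ``$\Sigma$'' by passing to the countable power $M^{(\aleph_0)}$ via Fact \ref{c-inj}. Throughout I will use two elementary observations. First, every injective module is pure-injective, since a splitting over all submodules is in particular a splitting over pure submodules. Second, if a module is both absolutely pure and pure-injective then it is injective: given $M \leq N$, absolute purity yields $M \leq_{pp} N$, and pure-injectivity then splits this pure inclusion, exhibiting $M$ as a direct summand of $N$.

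For $(1) \Rightarrow (2)$, assume $M$ is $\Sigma$-injective. In particular $M$ is injective, so whenever $M \leq N$ the module $M$ is a direct summand of $N$; since direct summands are pure submodules, $M \leq_{pp} N$, so $M$ is absolutely pure. For $\Sigma$-pure-injectivity, observe that for every index set $I$ the module $M^{(I)}$ is injective, hence pure-injective by the first observation; thus $M$ is $\Sigma$-pure-injective.

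For $(2) \Rightarrow (1)$, assume $M$ is absolutely pure and $\Sigma$-pure-injective. Taking the index set to be a singleton shows $M$ itself is pure-injective, so by the second observation $M$ is injective. By Fact \ref{c-inj} it now suffices to prove that $M^{(\aleph_0)}$ is injective. Since $M$ is $\Sigma$-pure-injective, $M^{(\aleph_0)}$ is pure-injective, so invoking the second observation once more, I only need to show that $M^{(\aleph_0)}$ is absolutely pure.

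The main point, and the step I expect to be the crux, is the lemma that a direct sum of absolutely pure modules is again absolutely pure. I would establish it through the homological description of absolute purity: a module $M$ is absolutely pure if and only if $\mathrm{Ext}^1_R(F, M) = 0$ for every finitely presented left $R$-module $F$. Fix such an $F$ together with a presentation $0 \to K \to P \to F \to 0$ in which $P$ is finitely generated free and $K$ is finitely generated, which is exactly what finite presentation provides. Applying $\mathrm{Hom}_R(-, N)$ and using $\mathrm{Ext}^1_R(P, N) = 0$ identifies $\mathrm{Ext}^1_R(F, N)$ with the cokernel of $\mathrm{Hom}_R(P, N) \to \mathrm{Hom}_R(K, N)$. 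Because $P$ and $K$ are finitely generated, both $\mathrm{Hom}$-functors commute with arbitrary direct sums, and so does their cokernel; hence $\mathrm{Ext}^1_R(F, -)$ commutes with direct sums. Consequently $\mathrm{Ext}^1_R(F, M^{(\aleph_0)}) \cong \mathrm{Ext}^1_R(F, M)^{(\aleph_0)} = 0$, so $M^{(\aleph_0)}$ is absolutely pure, which supplies the remaining ingredient. The only delicate point is the interchange of $\mathrm{Ext}^1$ with the infinite direct sum, and the finitely generated presentation above is precisely what legitimizes that interchange for finitely presented $F$.
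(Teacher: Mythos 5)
Your argument is correct. Note, however, that the paper does not prove this statement at all: it is imported as a Fact with a citation to Prest's book \cite[4.4.16]{prest09}, so there is no in-paper proof to compare against; what you have written is essentially the standard textbook argument. Both directions are sound: injective $\Rightarrow$ pure-injective and (absolutely pure $+$ pure-injective) $\Rightarrow$ injective are exactly the right elementary reductions, and the crux --- closure of absolutely pure modules under direct sums --- is handled correctly via the identification of absolutely pure with FP-injective (i.e., $\mathrm{Ext}^1_R(F,M)=0$ for all finitely presented $F$) together with the observation that, for a presentation $0 \to K \to P \to F \to 0$ with $P$ finitely generated free and $K$ finitely generated, $\mathrm{Ext}^1_R(F,-)$ is the cokernel of $\mathrm{Hom}_R(P,-) \to \mathrm{Hom}_R(K,-)$ and hence commutes with arbitrary direct sums. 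The appeal to Fact \ref{c-inj} to reduce $\Sigma$-injectivity to injectivity of $M^{(\aleph_0)}$ is legitimate (you first establish that $M$ itself is injective, as Fact \ref{c-inj} requires); alternatively, since $\Sigma$-pure-injectivity already gives pure-injectivity of $M^{(I)}$ for every $I$ and your Ext computation works for arbitrary direct sums, you could have concluded injectivity of every $M^{(I)}$ directly without invoking Fact \ref{c-inj}. The only ingredient you assume rather than prove is the Ext-vanishing characterization of absolute purity, which is itself a standard (but not completely trivial) fact; flagging a reference for it would make the write-up self-contained.
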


Using the equivalence between $\Sigma$-pure-injectivity and the descending chain condition on $pp$-definable subgroups one can show the following (see for example \cite[2.11]{prest}).

\begin{fact}\label{pp-inj}\
\begin{itemize}
\item If $N$ is $\Sigma$-pure-injective and $M \leq_{pp} N$, then $M$ is $\Sigma$-pure-injective.
\item If $N$ is $\Sigma$-pure-injective and $M$ is elementary equivalent to $N$, then $M$ is $\Sigma$-pure-injective.
\end{itemize}
\end{fact}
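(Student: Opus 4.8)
The plan is to deduce both bullets from the characterization that the text flags immediately above the statement: a module is $\Sigma$-pure-injective if and only if it has the descending chain condition (DCC) on $pp$-definable subgroups. In each part I assume $N$ has DCC on $pp$-definable subgroups and argue by contradiction: if $M$ failed to be $\Sigma$-pure-injective it would carry an infinite strictly descending chain of $pp$-definable subgroups, and the point is to transport that chain up to $N$ so as to contradict the DCC there. The only difference between the two parts is the transport mechanism — purity in the first, elementary equivalence in the second.

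The common preparatory step is a normalization of descending chains. Suppose $M$ lacks DCC, so there are $pp$-formulas $\phi_n(x)$ with $\phi_0[M] \supsetneq \phi_1[M] \supsetneq \cdots$ (here $x$ may be a tuple; the argument is unaffected). Replacing $\phi_n$ by $\psi_n := \bigwedge_{i \le n}\phi_i$, which is again a $pp$-formula, leaves the subgroups unchanged, $\psi_n[M] = \phi_n[M]$, while ensuring that $\psi_{n+1} \vdash \psi_n$ holds in \emph{every} $R$-module; in particular $\psi_{n+1}[A] \subseteq \psi_n[A]$ for $A \in \{M,N\}$. Thus I may assume the containments in the chain are logical, so the only information I must actually transfer to $N$ is the strictness $\psi_{n+1}[M] \subsetneq \psi_n[M]$.

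For the first bullet, where $M \leq_{pp} N$, purity gives $\psi_n[M] = \psi_n[N] \cap M$ for each $n$. If we had $\psi_{n+1}[N] = \psi_n[N]$, then intersecting with $M$ would force $\psi_{n+1}[M] = \psi_n[M]$, contradicting strictness; together with the logical containment this yields $\psi_{n+1}[N] \subsetneq \psi_n[N]$. Hence $N$ has an infinite strictly descending chain of $pp$-definable subgroups, contradicting the DCC in $N$, so $M$ has DCC and is $\Sigma$-pure-injective.

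For the second bullet, where $M$ is elementarily equivalent to $N$, the strictness $\psi_{n+1}[M] \subsetneq \psi_n[M]$ is witnessed by the first-order sentence $\exists x\,(\psi_n(x) \wedge \neg\,\psi_{n+1}(x))$. Since this sentence holds in $M$ and $M \equiv N$, it holds in $N$, giving $\psi_{n+1}[N] \subsetneq \psi_n[N]$ (the containment again being logical), and once more this contradicts DCC in $N$. The step that must be handled with care — and the only real obstacle — is precisely the normalization: before passing to the conjunctions one has merely set-theoretic containments $\phi_{n+1}[M] \subseteq \phi_n[M]$ that need not transfer to $N$, whereas the logical containments $\psi_{n+1} \vdash \psi_n$ transfer automatically and reduce each case to moving a single object across the map, an intersection identity under purity or a single existential sentence under elementary equivalence.
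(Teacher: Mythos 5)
Your proof is correct and follows exactly the route the paper indicates: the paper states this as a known fact, deducible from the equivalence between $\Sigma$-pure-injectivity and the descending chain condition on $pp$-definable subgroups (citing Prest), and your argument is a correct fleshing-out of precisely that reduction. The normalization to logical implications $\psi_{n+1} \vdash \psi_n$ and the transfer of strictness via the purity identity $\psi_n[M]=\psi_n[N]\cap M$ in the first bullet, and via the sentence $\exists x\,(\psi_n(x)\wedge\neg\psi_{n+1}(x))$ in the second, are exactly the standard steps.
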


We will also use that $\Sigma$-pure-injective modules are totally transcendental.

\begin{fact}[{\cite[3.2]{prest}}]\label{sigma-stability}
If $M$ is $\Sigma$-pure-injective, then $(Mod(Th(M)), \preceq)$ is $\lambda$-stable for every $\lambda \geq |Th(M)|$.
\end{fact}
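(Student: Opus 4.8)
The plan is to prove the sharper statement that $T:=Th(M)$ is totally transcendental, and to read off $\lambda$-stability for every $\lambda\geq|T|$ by directly bounding the number of complete types over a parameter set of size $\lambda$. Note $|T|=|R|+\aleph_0$, and fix $A$ with $|A|=\lambda\geq|T|$ sitting inside some $N\models T$; by the remark following the definition of $\lambda$-stability it suffices to show $|S^{T}(A)|\leq\lambda$. The first ingredient I would use is the Baur--Monk theorem ($pp$-elimination of quantifiers for modules): modulo $T$ every formula is equivalent to a Boolean combination of $pp$-formulas and invariant sentences. Since invariant sentences have no free variables they are decided by $T$, so for any $\theta(\bar x,\bar a)$ with $\bar a\in A$ whether $\theta(\bar x,\bar a)\in p$ is determined by which $pp$-formulas $\phi(\bar x,\bar a)$ lie in $p$. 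Thus the map sending $p\in S^{T}(A)$ to its \emph{$pp$-part} $p^{+}=\{\phi(\bar x,\bar a):\phi\text{ is }pp,\ \bar a\in A,\ \phi(\bar x,\bar a)\in p\}$ is injective, and it remains to bound the number of $pp$-types over $A$.

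The second ingredient is the chain condition. Since $M$ is $\Sigma$-pure-injective it has the descending chain condition on $pp$-definable subgroups (the equivalence recalled before Fact \ref{pp-inj}), and by the second bullet of Fact \ref{pp-inj} every model of $T$ is elementarily equivalent to $M$, hence again $\Sigma$-pure-injective; so each such $N$ satisfies this descending chain condition on its $pp$-definable subgroups in every $N^{n}$. Now fix a $pp$-type $p^{+}$ in variables $\bar x$ of length $n$, realized by a tuple $\bar b$ in some $N\models T$ with $A\cup\{\bar b\}\subseteq N$. Each $\phi(\bar x,\bar a)\in p^{+}$ defines in $N$ a coset $\bar b+H_{\phi}$ of the parameter-free $pp$-definable subgroup $H_{\phi}=\phi(N^{n},\bar 0)$, and a finite intersection of such cosets is again a coset, of the corresponding finite intersection of the $H_{\phi}$. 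By the descending chain condition there are finitely many $\phi_{1}(\bar x,\bar a_{1}),\dots,\phi_{m}(\bar x,\bar a_{m})\in p^{+}$ for which $H:=\bigcap_{i\leq m}H_{\phi_{i}}$ is minimal among all finite intersections arising from $p^{+}$; minimality forces $H\subseteq H_{\psi}$ for every $\psi(\bar x,\bar a')\in p^{+}$. Consequently the solution set of $p^{+}$ in $N$ equals the single coset $\bar b+H=\bigcap_{i\leq m}\phi_{i}(N^{n},\bar a_{i})$, and $\phi(\bar x,\bar a)\in p^{+}$ holds if and only if $\bar b+H\subseteq\phi(N^{n},\bar a)$. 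Hence $p^{+}$ is completely recovered from the finite tuple $(\phi_{1},\bar a_{1},\dots,\phi_{m},\bar a_{m})$.

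Finally I would count. Choosing such a witnessing tuple for each $p^{+}$ gives an injection from the set of $pp$-types over $A$ into the set of finite sequences of pairs $(\phi,\bar a)$ with $\phi$ a $pp$-formula and $\bar a\in A^{<\omega}$; there are at most $\bigl(|T|\cdot\lambda^{<\omega}\bigr)^{<\omega}=\lambda$ of these, using $\lambda\geq|T|\geq\aleph_{0}$. Combining with the Baur--Monk reduction yields $|S^{T}(A)|\leq\lambda$ for each $n$, i.e. $T$ is $\lambda$-stable. The main obstacle is the second paragraph: one must argue carefully that the descending chain condition on \emph{parameter-free} $pp$-definable subgroups (rather than on subgroups defined with parameters from $A$) already pins down each $pp$-type by a finite intersection of cosets, and that this genuinely determines the full type rather than only its restriction to finitely many formulas; the Baur--Monk reduction and the cardinal arithmetic are then routine.
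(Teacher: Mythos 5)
This statement is quoted in the paper as a Fact from Prest's book \cite[3.2]{prest} and is not proved there, so there is no in-paper argument to compare against; your write-up is a correct, self-contained rendition of the standard proof behind that citation (Zimmermann's descending chain condition on $pp$-definable subgroups, the reduction of complete types to their $pp$-parts via Baur--Monk, and the observation that each $pp$-type is the set of $pp$-formulas containing a single coset cut out by finitely many formulas from the type, followed by counting). The only point to state more carefully is in your second paragraph: take $\bar b$ to realize the \emph{complete} type $p$ in an elementary extension of the fixed ambient model (not merely a realization of $p^{+}$ in an arbitrary model containing $A$), so that $N\vDash\phi(\bar b,\bar a)$ really yields $\phi\in p^{+}$ and the inclusion criterion $\bigcap_{i\leq m}\phi_i(N^n,\bar a_i)\subseteq\phi(N^n,\bar a)$ is a statement over $A$ decided uniformly in all such extensions, which is what makes the recovery of $p^{+}$ from the finite witnessing tuple, and hence the injectivity of your counting map, legitimate.
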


 A ring $R$ is \emph{left noetherian} if every increasing chain of left ideals is stationary. These were introduced by Noether in \cite{noe}. Following \cite{eklof}, denote by $\gamma_R$ the smallest cardinal such that every left ideal of $R$ is generated by less than $\gamma_R$ elements. Observe that $\gamma_R \leq |R|^+$.  We will use the following equivalence later in the paper. The equivalence between one and four is due to Cartan-Eilenberg-Bass-Papp and the equivalence between one and two is trivial

\begin{fact}[{\cite[4.4.17]{prest09}}]\label{equivnoe} For a ring $R$ the following are equivalent.
\begin{enumerate}
\item $R$ is left noetherian.
\item $\gamma_R \leq \aleph_0$.
\item Every injective left $R$-module is $\Sigma$-injective.
\item Every direct sum of injective left $R$-modules is injective.
\item Every absolutely pure left $R$-module is injective.
\end{enumerate}
\end{fact}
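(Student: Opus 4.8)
The plan is to run a cycle of implications, dispatching $(1)\Leftrightarrow(2)$ as a definitional observation and organizing the rest around Baer's criterion and the descending chain condition on $pp$-definable subgroups. For $(1)\Leftrightarrow(2)$, by definition $\gamma_R$ is the least cardinal such that every left ideal is generated by fewer than $\gamma_R$ elements, so $\gamma_R\leq\aleph_0$ says exactly that every left ideal is finitely generated, which is the ascending chain condition. For the classical Cartan--Eilenberg--Bass--Papp direction $(1)\Rightarrow(4)$ I would invoke Baer's criterion: a module $E$ is injective iff every homomorphism $f\colon I\to E$ from a left ideal $I$ extends to $R$. Writing $E=\bigoplus_{i}E_i$ with each $E_i$ injective and assuming $R$ noetherian, any left ideal $I$ is finitely generated, so $f[I]$ lands in a finite subsum $\bigoplus_{i\in F}E_i$, which is injective; hence $f$ extends and $E$ is injective. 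The implication $(4)\Rightarrow(3)$ is then immediate, since $M^{(I)}$ is a direct sum of copies of the injective module $M$.

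For the converse $(4)\Rightarrow(1)$ I would argue contrapositively. Suppose $R$ is not noetherian and fix a strictly ascending chain $I_0\subsetneq I_1\subsetneq\cdots$ with union $I$; put $E=\bigoplus_n E(R/I_n)$, which is injective by $(4)$. The assignment $a\mapsto(a+I_n)_n$ defines a homomorphism $I\to E$ (each $a$ lies in some $I_N$, so the image is finitely supported); extending it to $R\to E$ by injectivity and evaluating at $1$ forces $a\in I_n$ for all large $n$, so the chain stabilizes, a contradiction.

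The step I expect to be the main obstacle is $(3)\Rightarrow(1)$, Faith's direction, because $(3)$ only controls the powers $M^{(\aleph_0)}$ of a \emph{single} injective and must be leveraged into a genuine chain condition. Here I would use that a $\Sigma$-injective module is $\Sigma$-pure-injective and hence satisfies the descending chain condition on $pp$-definable subgroups (the characterization quoted just before Fact~\ref{pp-inj}). Assuming $R$ is not noetherian, I would choose a non-finitely-generated left ideal together with elements $a_1, a_2, \dots$ so that the left ideals $J_n=\langle a_1,\dots,a_n\rangle$ strictly increase, and set $E=E\big(\bigoplus_n R/J_n\big)$, which is $\Sigma$-injective by $(3)$. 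The subgroups $U_n=\{e\in E: a_1e=\cdots=a_ne=0\}$ are $pp$-definable and form a descending chain; the image of $\overline{1}\in R/J_n\subseteq E$ satisfies $a_ie=0$ for $i\leq n$ while $a_{n+1}\overline{1}\neq 0$, so the chain is strictly descending. This contradicts the descending chain condition and yields $(1)$.

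Finally I would incorporate $(5)$ through the identification of absolutely pure with $FP$-injective modules. For $(1)\Rightarrow(5)$, over a noetherian ring every left ideal is finitely generated, so each $R/I$ is finitely presented, and an absolutely pure $M$ then satisfies $\mathrm{Ext}^1(R/I,M)=0$ for all left ideals $I$, hence is injective by Baer. For $(5)\Rightarrow(4)$, given injectives $E_i$ I would use that $\bigoplus_i E_i$ is a pure submodule of the injective product $\prod_i E_i$; since purity into one injective transfers to purity into the injective hull, $\bigoplus_i E_i$ is absolutely pure and thus injective by $(5)$. Combined with the cycle $(1)\Rightarrow(4)\Rightarrow(3)\Rightarrow(1)$ above, this closes all five equivalences.
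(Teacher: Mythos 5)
The paper does not actually prove this statement: it is quoted from \cite[4.4.17]{prest09}, with the remark that $(1)\Leftrightarrow(2)$ is trivial and $(1)\Leftrightarrow(4)$ is Cartan--Eilenberg--Bass--Papp. Your argument is a correct, essentially self-contained reconstruction, and the cycle $(1)\Leftrightarrow(2)$, $(1)\Rightarrow(4)\Rightarrow(3)\Rightarrow(1)$, $(1)\Rightarrow(5)\Rightarrow(4)$ does cover all five conditions. The individual steps are sound: Baer's criterion plus finite generation for $(1)\Rightarrow(4)$; the finite-support evaluation at $1$ for $(4)\Rightarrow(1)$ (the one point worth spelling out is that the bound $N$ on the support of the image of $1$ is uniform over all $a$ in the union ideal --- that uniformity, not the trivial fact that each $a$ eventually lies in some $I_n$, is what stabilizes the chain); and the identification of absolutely pure with $FP$-injective together with the purity of $\bigoplus_i E_i$ in the injective product $\prod_i E_i$ for the two implications involving $(5)$. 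Your treatment of $(3)\Rightarrow(1)$ is where you genuinely depart from the textbook route: Faith's original argument runs through the ascending chain condition on annihilators, whereas you pass through $\Sigma$-pure-injectivity and the descending chain condition on $pp$-definable subgroups. This is legitimate --- the subgroups $U_n$ are quantifier-free $pp$-definable annihilator conditions, and $\overline{1}\in R/J_n$ witnesses $U_n\supsetneq U_{n+1}$ precisely because $a_{n+1}\notin J_n$ --- and it has the advantage of reusing machinery the paper already quotes (Fact \ref{sigma-n} and the DCC characterization mentioned just before Fact \ref{pp-inj}), at the cost of importing the Gruson--Jensen/Zimmermann theorem where a more elementary annihilator argument would suffice. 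I see no gaps.
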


Recall the notion of a left pure-semisimple ring. 

\begin{defin}\label{pss}
A ring $R$ is \emph{left pure-semisimple} if and only if every left $R$-module $M$ is pure-injective.
\end{defin}

Many equivalent conditions have been found for the notion of a pure-semisimple ring, see for example  \cite{cha}, \cite{auslander1}, \cite{auslander}, \cite{zim}, \cite{simson81} and \cite{prest84}. A more updated set of equivalences is given in \cite{simson00} and \cite[\S 4.5.1]{prest09}. Below we give some of the equivalent conditions for a ring to be left pure-semisimple.

\begin{fact}[{\cite[11.3]{prest}}]\label{pps} For a ring $R$ the following are equivalent.
\begin{enumerate}
\item $R$ is left pure-semisimple.
\item Every left $R$-module $M$ is $\Sigma$-pure-injective.
\item Every left $R$-module is the direct sum of indecomposable submodules.
\end{enumerate}

\end{fact}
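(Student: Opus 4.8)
The plan is to split the three conditions into two groups: first dispatch the equivalence of (1) and (2), which is immediate from the machinery already recorded in the excerpt, and then treat the equivalence with the decomposition property (3), whose two directions have very different difficulty. For (2) $\Rightarrow$ (1) there is essentially nothing to prove: if every $M^{(I)}$ is pure-injective then so is $M$ itself, and in any case $M$ is a direct summand (hence a pure submodule) of $M^{(\aleph_0)}$, which is pure-injective, and pure-injectivity passes to direct summands. For (1) $\Rightarrow$ (2), fix a left $R$-module $M$; since the class of left $R$-modules is closed under arbitrary direct sums, $M^{(\aleph_0)}$ is again a left $R$-module, so (1) says it is pure-injective, and by Zimmermann's criterion \cite[3.4]{zimm} this is precisely the assertion that $M$ is $\Sigma$-pure-injective. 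Hence (1) and (2) are equivalent, and the real content of the statement is the equivalence of either of these with (3).

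For (2) $\Rightarrow$ (3) I would invoke the structure theory of $\Sigma$-pure-injective modules. Using the characterization of $\Sigma$-pure-injectivity by the descending chain condition on $pp$-definable subgroups — the same equivalence underlying Fact \ref{pp-inj} — one shows that every $\Sigma$-pure-injective module is a direct sum of indecomposable modules, each with local endomorphism ring: the chain condition produces indecomposable direct summands, and a Zorn's-lemma exhaustion argument, using Azumaya's exchange property for summands with local endomorphism rings, shows that these indecomposables fill up the whole module. Granting (2), every left $R$-module is $\Sigma$-pure-injective and therefore admits such a decomposition, which is exactly (3).

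The substantive implication, and the step I expect to be the main obstacle, is (3) $\Rightarrow$ (1) (equivalently $\Rightarrow$ (2)). The natural plan is to show that (3) forces every module to coincide with its pure-injective envelope. First I would argue that every indecomposable module is pure-injective: passing to the pure-injective envelope $\widehat{M}$ of an indecomposable $M$, in which $M$ sits as a pure and pure-essential submodule, one decomposes $\widehat{M}$ via (3) and uses pure-essentiality to see the decomposition is trivial, so that $M = \widehat{M}$ is pure-injective. The harder, genuinely structural point is then to obtain a uniform cardinal bound on the indecomposable left $R$-modules and to convert the decomposition of an arbitrary module — now into pure-injective indecomposables of bounded size — into $\Sigma$-pure-injectivity of the whole; since a direct sum of pure-injective modules need not be pure-injective, this last conversion is exactly where the difficulty concentrates. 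In the literature this is handled through the descending chain condition on $pp$-subgroups together with functor-category duality (Gruson--Jensen), or through the theory of the Ziegler spectrum, and I would cite that machinery rather than reconstruct it, consistent with the result being quoted here from \cite[11.3]{prest}.
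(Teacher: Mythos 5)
The paper does not actually prove this statement: it is recorded as Fact \ref{pps} and quoted from \cite[11.3]{prest}, so there is no internal argument to compare yours against, and the honest benchmark is the standard literature proof. Measured against that, your handling of (1) $\Leftrightarrow$ (2) (trivial one way, and via \cite[3.4]{zimm} applied to $M^{(\aleph_0)}$ the other way) and of (2) $\Rightarrow$ (3) (descending chain condition on $pp$-definable subgroups, indecomposable summands with local endomorphism rings, Azumaya-type exchange) is correct and is exactly the usual route; and deferring the genuinely hard implication (3) $\Rightarrow$ (1) to the Gruson--Jensen/Ziegler-spectrum machinery is consistent with the fact being cited rather than proved in the paper.

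One concrete caveat about the portion of (3) $\Rightarrow$ (1) that you do sketch: the argument that every indecomposable module is pure-injective does not work as stated. Pure-essentiality of $M$ in its pure-injective hull $\widehat{M}$ does not force a direct sum decomposition of $\widehat{M}$ to be trivial: over $\mathbb{Z}$, the group $\mathbb{Z}$ is indecomposable and pure-essential in its hull $\prod_p \mathbb{Z}_p$, which decomposes nontrivially (of course (3) fails for $\mathbb{Z}$, but this shows the formal step ``pure-essential over an indecomposable, hence the decomposition is trivial'' is not a valid principle, so hypothesis (3) would have to enter in a much more global way). Moreover, even if $\widehat{M}$ were indecomposable, that would not give $M=\widehat{M}$: the localization $\mathbb{Z}_{(p)}$ is indecomposable, is not pure-injective, and its pure-injective hull is the indecomposable module of $p$-adic integers $\mathbb{Z}_p$. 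So this intermediate step really belongs to the ``cited machinery'' part of your plan (dcc on $pp$-subgroups, functor-category duality, or the Ziegler spectrum) rather than to the elementary part; with that reassignment, and given that the paper itself treats the whole statement as a black box from \cite[11.3]{prest}, your overall plan is reasonable.
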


Recall Bumby's result \cite{bumby} and its generalization to pure-injective modules. A proof of both results (and a discussion of 
the general setting) appears in
\cite[2.5]{gks}.

\begin{fact}\
\label{ipi}\begin{itemize}
\item Let $M, N$ be injective modules. If there are $f: M \to N$ an embedding and $g: N \to M$ an 
embedding, then $M$ is isomorphic to $N$. 
\item Let $M, N$ be pure-injective modules. If there are $f: M \to N$ a pure embedding and $g: N \to M$ a 
pure embedding, then $M$ is isomorphic to $N$. 
\end{itemize}
\end{fact}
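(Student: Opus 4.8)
The plan is to derive both items from a single module-theoretic input together with a Cantor--Schr\"oder--Bernstein argument. The input is a splitting fact coming straight from the definitions recalled earlier: if $X$ is injective and $X \leq Y$, then $X$ is a direct summand of $Y$; and if $X$ is pure-injective and $X \leq_{pp} Y$, then $X$ is a direct summand of $Y$. Hence in the first item, since $f$ is an embedding and $f(M)\cong M$ is injective, $f(M)$ is a direct summand of $N$; symmetrically $g(N)$ is a direct summand of $M$. In the second item, since $f$ is a \emph{pure} embedding, $f(M)\cong M$ is a pure submodule of $N$ and is pure-injective (pure-injectivity being isomorphism-invariant), so again $f(M)$ is a direct summand of $N$, and symmetrically for $g(N)$. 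In both cases we are reduced to the following purely structural statement: each of $M,N$ is isomorphic to a direct summand of the other.

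Next I would set $\alpha=g\circ f\colon M\to M$ and $\beta=f\circ g\colon N\to N$, both monomorphisms whose images are direct summands of the whole. Writing $M=\alpha(M)\oplus D$ and iterating gives, for each $n$, a decomposition $M=\alpha^{n}(M)\oplus\bigoplus_{i<n}\alpha^{i}(D)$, so that the internal sum $\bigoplus_{i\geq 0}\alpha^{i}(D)$ is direct; the analogous telescoping is carried out for $N$ with $\beta$. The goal is then to match the telescoping summands on the two sides and to account for the common ``limit'' part $\bigcap_{n}\alpha^{n}(M)$, assembling these matchings into an isomorphism $M\cong N$. This is exactly the Cantor--Schr\"oder--Bernstein scheme, and the role of injectivity (resp. pure-injectivity) is to guarantee that the limit/remainder submodule is again injective (resp. pure-injective) and splits off the rest, which is precisely the step at which a naive diagrammatic argument would break down.

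The main obstacle is this remainder term: the Schr\"oder--Bernstein property fails for modules in general, so the construction cannot be formal and must genuinely use that the class under consideration is closed under the operations producing the limit summand (direct summands, together with injective hulls, respectively products, as appropriate). I expect the injective case to follow Bumby's original argument \cite{bumby}, and the pure-injective case to run in parallel once ``$\leq$'' is replaced throughout by ``$\leq_{pp}$'' and one checks that purity is preserved under the relevant direct-sum decompositions; both are instances of the general Cantor--Schr\"oder--Bernstein framework recorded in \cite[2.5]{gks}.
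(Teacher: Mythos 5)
Your opening reduction is fine, and it matches how one has to start: images of embeddings of injectives (resp.\ pure embeddings of pure-injectives) are direct summands, so each module is, up to isomorphism, a direct summand of the other. Note that the paper itself does not prove this Fact at all --- it quotes Bumby \cite{bumby} and \cite[2.5]{gks} --- so your attempt has to be measured against those arguments, and the core ``assembly'' step of your sketch is where it goes wrong. Your plan needs the decomposition $M=\bigl(\bigcap_{n}\alpha^{n}(M)\bigr)\oplus\bigoplus_{i<\omega}\alpha^{i}(D)$ (or at least that the ``limit'' part is again injective/pure-injective and splits off against the telescoping sum), and neither assertion is justified; the decomposition is in fact false even for injective $M$. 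Take $R=\mathbb{Z}$, $M=N=\mathbb{Q}^{\omega}$ (injective, being a product of injectives), $f$ the right shift and $g=\mathrm{id}$, so $\alpha=g\circ f$ is the shift: each $\alpha^{n}(M)$ is a direct summand isomorphic to $M$, $D\cong\mathbb{Q}$, $\bigcap_{n}\alpha^{n}(M)=0$, yet $\bigoplus_{i<\omega}\alpha^{i}(D)=\mathbb{Q}^{(\omega)}$ is a proper submodule of $M$; so ``limit plus telescoping'' does not exhaust $M$, and no matching of those pieces can by itself produce $M\cong N$. Likewise, an intersection of a descending chain of injective summands has no reason to be injective; injectivity simply does not enter the proof at that spot.

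What Bumby actually does avoids the limit part entirely. Reduce to $N\subseteq M$, write $\varphi=g\circ f$, $M=\varphi(M)\oplus D\oplus C$ and $N=\varphi(M)\oplus D$ as you did, and form the internal direct sum $X=\bigoplus_{i<\omega}\varphi^{i}(C\oplus D)$ (your telescoping gives directness). Now take an injective hull $E$ of $X$ \emph{inside} $M$, so $M=E\oplus M'$ with $M'$ untouched; since $X\cong (C\oplus D)^{(\omega)}\cong D\oplus X$ and injective hulls satisfy $E(A\oplus B)\cong E(A)\oplus E(B)$, one gets $E\cong D\oplus E$, hence $M\cong D\oplus M\cong N$. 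So injectivity is used to split images and to form and split the hull of the telescoping sum --- not to control $\bigcap_{n}\alpha^{n}(M)$. For the second item the same scheme works with pure-injective hulls, but ``replace $\leq$ by $\leq_{pp}$'' is not automatic: you need $X$ to be pure in $M$ (true, since it is a directed union of direct summands, but it must be said) and the existence and splitting properties of pure-injective hulls, or else you should directly invoke the decomposition theory of pure-injectives as in \cite[2.5]{gks}; closure under products, which you mention, is not the relevant ingredient. As it stands, the proposal states the two correct endpoints (splitting, and ``some CSB-type bookkeeping'') but the bookkeeping it describes is the one that provably fails for modules.
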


\subsection{Notation}
We will use the following notation which was introduced in \cite[3.1]{kuma}.

\begin{nota}\label{not} Given $R$ a ring, we denote by
$\textbf{Th}_R$ the theory of left $R$-modules. A (not necessarily complete) first-order theory $T$ is \emph{a theory of modules} if it extends $\textbf{Th}_R$. For $T$ a theory of modules, let $\K^{T}= ( Mod(T),
\leq_{pp})$
and $|T|=|R|+\aleph_0$.

\end{nota}

Since we will also work with embeddings we introduce the following notation.

\begin{nota}
Given $R$ a ring, we will use the standard notation $(R\text{-Mod}, \subseteq_R)$ instead of the model-theoretic notation $(Mod(\textbf{Th}_R), \leq)$ to denote the AEC of left $R$-modules with embeddings.
\end{nota}

\section{A new characterization of noetherian rings}

In this section we will work in the class of modules with embeddings. Since complete theories of modules only have $pp$-quantifier elimination, we do not think that in the case of classes of modules with embeddings there is a deep theory as the one we will develop in the next section for pure embeddings. Instead, using some more rudimentary methods, we will study the class of modules with embeddings.

\begin{remark}
It is well-known that $(R\text{-Mod}, \subseteq_R)$ is an AEC that has amalgamation, joint embedding and no maximal models. 
\end{remark}

The next assertion describes Galois-types in this context.

\begin{lemma}
Let $M, N_1, N_2 \in R\text{-Mod}$,  $M \subseteq_R N_1, N_2$, $\bar{b}_{1}
\in  N_1^{<\omega}$  and $\bar{b}_{2} \in N_2^{<\omega}$. Then:
 \[ \gtp_{(R\text{-Mod}, \subseteq_R)}(\bar{b}_{1}/M; N_1) = \gtp_{(R\text{-Mod}, \subseteq_R)}(\bar{b}_{2}/M; N_2) \text{ if
and
only if } qf\text{-}tp(\bar{b}_{1}/M , N_1) = qf\text{-}tp(\bar{b}_{2}/M, N_2).\]
\end{lemma}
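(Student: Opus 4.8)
The plan is to prove both implications using two standard facts. First, a map between $R$-modules is an $(R\text{-Mod}, \subseteq_R)$-embedding precisely when it is an injective module homomorphism, and such maps both preserve and reflect quantifier-free formulas: atomic formulas in the language of modules assert that an $R$-linear combination of the variables and parameters equals zero, homomorphisms respect these, and injectivity reflects equalities (hence also inequalities, and so all quantifier-free formulas). Second, $(R\text{-Mod}, \subseteq_R)$ has amalgamation, as recorded in the Remark preceding the statement. Throughout I abbreviate $\gtp = \gtp_{(R\text{-Mod}, \subseteq_R)}$.

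For the forward direction, I would first reduce to a single $E_{\text{at}}$-step. Since the Galois-type is the $E$-class and $E$ is the transitive closure of $E_{\text{at}}$, a finite chain of $E_{\text{at}}$-related triples connects $(\bar{b}_1, M, N_1)$ to $(\bar{b}_2, M, N_2)$; because $E_{\text{at}}$ only relates triples sharing the same parameter set, every triple in the chain has parameter set $M$, and, as equality of quantifier-free types is transitive, it suffices to check that one $E_{\text{at}}$-step preserves the quantifier-free type over $M$. So suppose $f_\ell : N_\ell \xrightarrow[M]{} N$ witness $E_{\text{at}}$ with $f_1(\bar{b}_1) = f_2(\bar{b}_2)$. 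For a quantifier-free $\phi(\bar{x})$ with parameters from $M$, absoluteness of quantifier-free formulas under the embeddings $f_\ell$ together with $f_\ell\rest_M = \id_M$ yields $N_1 \models \phi(\bar{b}_1)$ iff $N \models \phi(f_1(\bar{b}_1))$ iff $N \models \phi(f_2(\bar{b}_2))$ iff $N_2 \models \phi(\bar{b}_2)$, which is exactly equality of the quantifier-free types.

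For the converse, I would turn the equality of quantifier-free types into an isomorphism of generated submodules. Let $A_\ell := \langle M \cup \bar{b}_\ell\rangle \leq_R N_\ell$. The assignment fixing $M$ and sending $\bar{b}_1 \mapsto \bar{b}_2$ extends to a well-defined $R$-linear map $g : A_1 \to A_2$: any relation $\sum_i r_i b_{1,i} + m = 0$ (with $m \in M$) is an atomic formula over $M$ true of $\bar{b}_1$, hence true of $\bar{b}_2$, so the map respects the defining relations; reading the equivalence backwards shows $g$ is injective, and it is clearly onto $A_2$. Thus $g : A_1 \cong A_2$ fixes $M$ pointwise and carries $\bar{b}_1$ to $\bar{b}_2$. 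Applying amalgamation to the span $N_1 \hookleftarrow A_1 \xrightarrow{g} A_2 \hookrightarrow N_2$ produces $N$ with embeddings $h_1 : N_1 \to N$ and $h_2 : N_2 \to N$ agreeing on $A_1$ via $g$, so that $h_1(\bar{b}_1) = h_2(\bar{b}_2)$ and $h_1\rest_M = h_2\rest_M$.

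The main obstacle is the last technicality: the definition of $E_{\text{at}}$ requires the two maps to be the \emph{identity} on $M$, whereas amalgamation only delivers maps that agree on $M$ via a common embedding $e := h_1\rest_M = h_2\rest_M$. I would resolve this by relabelling: transport the module structure of $N$ along a bijection $\theta$ that inverts $e$ on $e[M]$ and is the identity elsewhere, obtaining $N' \cong N$ with $M \subseteq N'$ and $\theta\circ e = \id_M$; then $f_\ell := \theta \circ h_\ell$ are $(R\text{-Mod}, \subseteq_R)$-embeddings fixing $M$ pointwise with $f_1(\bar{b}_1) = f_2(\bar{b}_2)$, witnessing $(\bar{b}_1, M, N_1)\, E_{\text{at}}\, (\bar{b}_2, M, N_2)$ and hence equality of the Galois-types. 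Everything else is routine module algebra.
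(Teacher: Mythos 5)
Your proof is correct and follows essentially the same route as the paper: the forward direction is the routine preservation of quantifier-free formulas under $\K$-embeddings fixing $M$, and for the converse both you and the paper use equality of quantifier-free types to build an isomorphism $\langle M\bar{b}_1\rangle \cong \langle M\bar{b}_2\rangle$ fixing $M$ and sending $\bar{b}_1$ to $\bar{b}_2$, and then conclude by amalgamation (plus the standard renaming to make the maps the identity on $M$, which the paper's sketch glosses over by ``applying amalgamation a couple of times'').
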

\begin{proof}[Proof sketch]
The forward direction is trivial, so let us sketch the backward direction. By the amalgamation property we may assume that $N_1=N_2=N$. Define $f: \langle \bar{b}_1 M \rangle \to \langle \bar{b}_2 M \rangle$ as $f(\Sigma_{i=1}^{n} r_ib_{1,i} + \Sigma_{i=1}^{k} s_i m_i)= \Sigma_{i=1}^{n} r_ib_{2,i} + \Sigma_{i=1}^{k} s_i m_i$ where $\langle \bar{b}_\ell M \rangle$ is the submodule generated by $\bar{b}_\ell M$ inside $N$ for $\ell \in \{1, 2\}$, $r_i, s_i \in R$ for all $i$ and $m_i \in M$ for all $i$. Using that the quantifier free types are equal, it follows that $f$ is an isomorphism. Then the result follows by applying amalgamation a couple of times. \end{proof}

Since we can witness that two Galois-types are different by a quantifier free formula, we obtain.

\begin{cor}\label{tame-mod}
$(R\text{-Mod}, \subseteq_R)$ is $(< \aleph_0)$-tame.
\end{cor}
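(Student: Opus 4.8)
The plan is to combine the preceding lemma, which reduces equality of Galois-types to equality of quantifier-free types, with the elementary observation that any quantifier-free formula mentions only finitely many parameters. Fix $M \in R\text{-Mod}$ and two distinct Galois-types $p = \gtp(\bar{b}_1/M; N_1)$ and $q = \gtp(\bar{b}_2/M; N_2)$ in $\gS(M)$. By the lemma, $p \neq q$ forces $qf\text{-}tp(\bar{b}_1/M, N_1) \neq qf\text{-}tp(\bar{b}_2/M, N_2)$, so there is a quantifier-free formula $\psi(\bar{x}, \bar{y})$ and a tuple $\bar{m} = (m_1, \dots, m_k) \in M^{<\omega}$ such that, without loss of generality, $N_1 \models \psi[\bar{b}_1, \bar{m}]$ while $N_2 \not\models \psi[\bar{b}_2, \bar{m}]$.

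First I would set $A = \{m_1, \dots, m_k\} \subseteq |M|$, a finite set, and claim that $p\upharpoonright_A \neq q\upharpoonright_A$; since $|A| < \aleph_0$ this is precisely what $(<\aleph_0)$-tameness requires. To prove the claim I would show that whenever $(\bar{b}_1, A, N_1) \mathrel{E_{\text{at}}} (\bar{b}_2, A, N_2)$, the tuples $\bar{b}_1$ and $\bar{b}_2$ satisfy the same quantifier-free formulas with parameters in $A$. This is because $E_{\text{at}}$ provides embeddings $f_\ell : N_\ell \xrightarrow[A]{} N$ with $f_1(\bar{b}_1) = f_2(\bar{b}_2)$; as $R$-module embeddings, both preserve and reflect quantifier-free formulas (these are Boolean combinations of linear equations), and each $f_\ell$ fixes $A$ pointwise, so $N_1 \models \chi[\bar{b}_1]$ iff $N \models \chi[f_1(\bar{b}_1)]$ iff $N \models \chi[f_2(\bar{b}_2)]$ iff $N_2 \models \chi[\bar{b}_2]$ for every quantifier-free $\chi$ over $A$, where the middle equivalence uses $f_1(\bar{b}_1) = f_2(\bar{b}_2)$.

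Then I would propagate this invariance along the transitive closure $E$ of $E_{\text{at}}$: since ``agreeing on all quantifier-free formulas over $A$'' is an equivalence relation containing $E_{\text{at}}$, it also contains $E$. Consequently, if $p\upharpoonright_A = q\upharpoonright_A$, that is, if $(\bar{b}_1, A, N_1) \mathrel{E} (\bar{b}_2, A, N_2)$, then $\bar{b}_1$ and $\bar{b}_2$ would satisfy exactly the same quantifier-free formulas over $A$, contradicting the fact that $\psi$ with parameters $\bar{m} \in A^{<\omega}$ separates them. Hence $p\upharpoonright_A \neq q\upharpoonright_A$, which finishes the argument.

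I expect the main obstacle to be the bookkeeping in the second step: one must check carefully that quantifier-free formulas (as opposed to arbitrary first-order formulas) are \emph{reflected}, not merely preserved, by the embeddings witnessing $E_{\text{at}}$, and that shrinking the parameter set from $M$ down to the finite set $A$ does not accidentally identify the two restricted types. Everything else is formal, and it is exactly the finiteness of the parameter set of $\psi$ that delivers the bound $(<\aleph_0)$ rather than a larger one.
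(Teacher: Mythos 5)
Your proof is correct and takes essentially the same route as the paper, which dispatches this corollary in one line from the preceding lemma (``since we can witness that two Galois-types are different by a quantifier-free formula''). Your write-up merely fills in the details left implicit there --- notably the check that agreement on quantifier-free formulas over the finite parameter set $A$ is preserved by $E_{\text{at}}$ and hence by its transitive closure $E$ --- and those details are all accurate.
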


The above corollary also follows from the general theory of AECs \cite[3.7]{vaseyu}, since $(R\text{-Mod}, \subseteq_R)$ is a universal class in the sense of  \cite{tarski} (see \cite[2.1]{mv} for the definition).

An analogous argument to the one given in \cite[4.8]{kuma} can be used to show the following.

\begin{prop}\label{easyp} Let $\lambda$ be an infinite cardinal.
If $E \in (R\text{-Mod}, \subseteq_R)_\lambda $ is injective and $U \in R\text{-Mod}$ is universal in $(R\text{-Mod}, \subseteq_R)_\lambda$, then $E \oplus U$ is universal over $E$. 
\end{prop}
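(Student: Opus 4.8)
The plan is to verify the three clauses in the definition of \emph{universal over} directly. Two of them are immediate: $E$ is a direct summand of $E \oplus U$, hence $E \subseteq_R E \oplus U$; and since $\lambda$ is infinite with $\|E\| = \|U\| = \lambda$, we get $\|E \oplus U\| = \lambda = \|E\|$, so $E\oplus U \in (R\text{-Mod}, \subseteq_R)_\lambda$. It then remains to establish the universality clause: given any $N^* \in (R\text{-Mod}, \subseteq_R)_\lambda$ with $E \subseteq_R N^*$, I must produce a $\K$-embedding $f \colon N^* \xrightarrow[E]{} E \oplus U$.

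First I would exploit the injectivity of $E$. Since $E \subseteq_R N^*$ and $E$ is injective, $E$ is a direct summand of $N^*$, so $N^* = E \oplus C$, where $C$ is the kernel of a retraction $N^* \to E$ that restricts to $\id_E$. Next, since $N^* \in (R\text{-Mod}, \subseteq_R)_\lambda$ and $U$ is universal in $(R\text{-Mod}, \subseteq_R)_\lambda$, there is an embedding $h \colon N^* \to U$; restricting it to the summand $C$ yields an embedding $g := h\rest_C \colon C \to U$. Finally I would assemble $f := \id_E \oplus g$, that is $f(e + c) = e + g(c)$ for $e \in E$ and $c \in C$, sending $E$ into the first summand of $E \oplus U$ and $C$ into the second. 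This $f$ is an $R$-homomorphism with $f\rest_E = \id_E$, and it is injective because $g$ is injective and the images of $E$ and of $g(C)$ lie in complementary direct summands; hence $f$ is the desired $\K$-embedding over $E$, which finishes the proof.

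The crux is not the universality of $U$ by itself, since a bare embedding $N^* \to U$ need not fix $E$ pointwise. Rather it is the combination of universality with the splitting furnished by the injectivity of $E$: this splitting is exactly what allows the map to be taken as the identity on $E$ while routing the complement $C$ into $U$. Everything else is the routine observation that the assembled $f$ is an injective module homomorphism fixing $E$, which in $(R\text{-Mod}, \subseteq_R)$ is precisely a $\K$-embedding over $E$.
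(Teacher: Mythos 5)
Your proof is correct, and it is essentially the argument the paper intends: the paper simply says the result follows by the analogue of \cite[4.8]{kuma}, whose proof is exactly your combination of the splitting given by injectivity of $E$ (so $N^* = E \oplus C$) with an embedding of $C$ into $U$ obtained from universality, assembled as $\id_E \oplus g$.
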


The next fact from \cite{eklof} will be useful.

\begin{fact}[ {\cite[Proposition 3]{eklof}}]\label{uni-mod}
Let $\lambda$ be an infinite cardinal with $\lambda \geq |R| + \aleph_0$. $\lambda^{<\gamma_R}=\lambda$ if and only if there is an injective universal model in $(R\text{-Mod}, \subseteq_R)_\lambda$. 
\end{fact}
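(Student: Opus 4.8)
The plan is to prove both directions by reducing everything to a count of quantifier-free $1$-types over a module of size $\lambda$, which by the Galois-type characterization preceding Corollary \ref{tame-mod} is the same as counting Galois-types. The key algebraic observation is that a qf-$1$-type over $M$ is coded by a pair $(I,\phi)$, where $I=\{r\in R : rb\in M\}$ is a left ideal and $\phi\colon I\to M$, $\phi(r)=rb$, is an $R$-homomorphism (that two realizations with the same $(I,\phi)$ are conjugate over $M$ is exactly the map-building argument in the Galois-type Lemma above). Since each left ideal is generated by $<\gamma_R$ elements, there are at most $|R|^{<\gamma_R}\le\lambda^{<\gamma_R}$ ideals, and for a fixed $I$ generated by $\kappa<\gamma_R$ elements a homomorphism $\phi\colon I\to M$ is determined by the images of the generators, so there are at most $\|M\|^{\kappa}\le\lambda^{<\gamma_R}$ of them. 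Hence the number of qf-$1$-types over any $M$ of size $\lambda$ is at most $\lambda^{<\gamma_R}$, so $\lambda^{<\gamma_R}=\lambda$ forces $\lambda$-stability of $(R\text{-Mod},\subseteq_R)$.

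For the forward direction, assume $\lambda^{<\gamma_R}=\lambda$. By the previous paragraph the class is $\lambda$-stable, so by Fact \ref{existence} it has a $(\lambda,\alpha)$-limit model, which by Fact \ref{univ} is universal in $(R\text{-Mod},\subseteq_R)_\lambda$; call it $U_0$. I would then pass to the injective hull $U:=E(U_0)$. Since $U_0\subseteq U$ and $U_0$ is universal, $U$ is again universal, and $U$ is injective by construction. It remains to see $\|U\|=\lambda$: clearly $\|U\|\ge\|U_0\|=\lambda$, and for the upper bound one builds $E(M)$ by the standard transfinite process that at each stage adjoins, for each pair $(I,\phi)$ with $\phi\colon I\to M_i$, a solution of the corresponding extension problem (Baer's criterion). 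Each stage adds at most $\lambda^{<\gamma_R}$ new elements, so every stage has size at most $\lambda^{<\gamma_R}=\lambda$ and hence $\|U\|\le\lambda$; this gives $\|U\|=\lambda$ and completes the direction. (The same "realize every $(I,\phi)$ over the whole model" idea is exactly why a sufficiently long limit model is injective via Baer's criterion, which is an alternative route when $\lambda>|R|$.)

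For the reverse direction, suppose there is an injective universal model $U$ in $(R\text{-Mod},\subseteq_R)_\lambda$. For any module $M$ with $\|M\|\le\lambda$ we have an embedding $M\hookrightarrow U$, and since $U$ is injective this extends along the essential extension $M\subseteq E(M)$ to an embedding $E(M)\hookrightarrow U$; hence $\|E(M)\|\le\lambda$ for every such $M$. Distinct qf-$1$-types over $M$ are realized by non-conjugate, hence distinct, elements, so $\|E(M)\|$ is at least the number of qf-$1$-types over $M$ that are realized inside $E(M)$. The plan is then to choose $M$ of size $\le\lambda$ together with a left ideal $I$ requiring $\ge\kappa$ generators (which exists for each $\kappa<\gamma_R$ by minimality of $\gamma_R$) for which $\lambda^{\kappa}$ many homomorphisms $\phi\colon I\to M$ produce $\lambda^{\kappa}$ genuinely distinct elements of $E(M)$; this yields $\lambda^{\kappa}\le\|E(M)\|\le\lambda$ for all $\kappa<\gamma_R$, i.e. $\lambda^{<\gamma_R}\le\lambda$, and since trivially $\lambda\le\lambda^{<\gamma_R}$ we conclude $\lambda^{<\gamma_R}=\lambda$.

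The main obstacle is precisely the sharpness needed in this last step: turning the upper bound $\#\{(I,\phi)\}\le\lambda^{<\gamma_R}$ into a matching \emph{lower} bound on $\|E(M)\|$. The difficulty is that a homomorphism $\phi\colon I\to M$ extends (by injectivity and Baer) to some $q\in E(M)$, but the ideal $\{r:rq\in M\}$ may be strictly larger than $I$, so different pairs $(I,\phi)$ can collapse to the same element, and minimal generation of $I$ does not by itself make the images of the generators independent. Resolving this requires a careful choice of the coefficient module $M$ (for instance built from copies of $R$ or of $R/\operatorname{ann}(a_i)$ arranged so the generators of $I$ map freely) so that the $\lambda^{\kappa}$ homomorphisms are forced to have distinct extensions; this is the genuinely technical combinatorial-algebraic core, and is where Eklof's argument does its work.
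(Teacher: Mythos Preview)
The paper does not supply a proof of this statement: it is quoted as a fact from \cite[Proposition 3]{eklof} and used as a black box (notably to derive Lemma \ref{sta-m}). So there is no in-paper argument to compare against; the relevant benchmark is Eklof's original proof.

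Your forward direction is correct, though it takes a detour relative to Eklof. You prove $\lambda$-stability directly from the $(I,\phi)$ type-count (this is indeed Eklof's key observation), then invoke Fact \ref{existence} to get a universal limit model $U_0$, and finally pass to $E(U_0)$ and bound its size by $\lambda^{<\gamma_R}=\lambda$. Eklof instead builds a $\lambda$-homogeneous-universal injective module directly by a chain of length $\lambda$ in which each successor step realizes all $(I,\phi)$-extension problems over the previous stage; your route through stability and limit models is a legitimate alternative. One cosmetic point: in the present paper, stability (Lemma \ref{sta-m}) is \emph{deduced from} Fact \ref{uni-mod}, so a reader may momentarily worry about circularity when you appeal to Fact \ref{existence}; you avoid this because your type-count establishes stability independently, but it would be worth saying so explicitly.

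Your reverse direction, as you yourself flag, has a genuine gap. You correctly reduce the problem to producing, for each $\kappa<\gamma_R$, a module $M$ of size $\le\lambda$ together with $\lambda^{\kappa}$ homomorphisms $I\to M$ whose extensions to $E(M)$ are pairwise distinct, but you do not carry this out. This is not bookkeeping: it is exactly the substantive content of Eklof's argument for this direction, where one takes $M$ to be a suitable free module so that distinct tuples of images of a minimal generating set of $I$ remain distinguishable in the hull. Without that construction the implication is unproven, so as written your proposal is an outline rather than a proof for this half.
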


With it we will be able to show that $(R\text{-Mod}, \subseteq_R)$ is stable.

\begin{lemma}\label{sta-m}
Let $R$ be a ring and $\lambda$ be an infinite cardinal with $\lambda \geq |R| + \aleph_0$. If $\lambda^{<\gamma_R}=\lambda$, then $(R\text{-Mod}, \subseteq_R)$ is $\lambda$-stable.
\end{lemma}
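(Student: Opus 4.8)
The plan is to bound the number of Galois-types over a fixed $M \in (R\text{-Mod}, \subseteq_R)_\lambda$ by counting quantifier-free $1$-types directly. By the Lemma identifying Galois-types with quantifier-free types, $|\gS_{(R\text{-Mod}, \subseteq_R)}(M)|$ equals the number of quantifier-free $1$-types over $M$ that are realized in some extension, so it suffices to show there are at most $\lambda$ such types.

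First I would describe a realized quantifier-free $1$-type concretely. In the language of left $R$-modules every term in one variable $x$ with parameters in $M$ has the form $rx + m$ with $r \in R$ and $m \in M$, so every atomic formula over $M$ is equivalent to one of the form $rx = m$. Hence a complete quantifier-free type $p$ realized by an element $b$ in some $N \supseteq_R M$ is completely determined by the set $\{(r,m) \in R \times M : rb = m\}$, since the truth of any Boolean combination of atomic formulas is determined by that set. I would repackage this datum as the pair $(I_b, f_b)$, where $I_b = \{r \in R : rb \in M\}$ and $f_b : I_b \to M$ is given by $f_b(r) = rb$. Because $M$ is a submodule, $I_b$ is a left ideal of $R$ and $f_b$ is a homomorphism of left $R$-modules, and the assignment $p \mapsto (I_b, f_b)$ is injective on realized types. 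Therefore the number of realized quantifier-free $1$-types over $M$ is at most the number of pairs $(I, f)$ with $I$ a left ideal of $R$ and $f : I \to M$ an $R$-homomorphism.

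It remains to bound this number of pairs by $\lambda$, which is exactly where the hypotheses $\lambda \geq |R| + \aleph_0$ and $\lambda^{<\gamma_R} = \lambda$ enter. By the definition of $\gamma_R$, every left ideal is generated by fewer than $\gamma_R$ elements, so choosing generators injects the set of left ideals into the subsets of $R$ of size $<\gamma_R$; there are at most $|R|^{<\gamma_R} \leq \lambda^{<\gamma_R} = \lambda$ of these. For a fixed $I$, any homomorphism $f : I \to M$ is determined by its values on a generating set of size $<\gamma_R$, so the number of homomorphisms out of $I$ is at most $|M|^{<\gamma_R} = \lambda^{<\gamma_R} = \lambda$. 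Multiplying, the number of pairs is at most $\lambda \cdot \lambda = \lambda$, giving $|\gS(M)| \leq \lambda$ and hence $\lambda$-stability.

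The only genuinely delicate point is the bookkeeping in the previous paragraph: one must use $\gamma_R$ rather than $|R|^+$ to control \emph{both} the number of left ideals and the number of homomorphisms out of each, since it is precisely the combination with $\lambda^{<\gamma_R} = \lambda$ that pins the count down to $\lambda$; everything else is routine. I note that the same conclusion can also be reached through Fact \ref{uni-mod}: its injective universal model $U \in (R\text{-Mod}, \subseteq_R)_\lambda$, together with Proposition \ref{easyp} applied to an injective hull of $M$ taken inside $U$ (which then has size at most $\lambda$), yields a single model of size $\lambda$ realizing every type over $M$. I prefer the direct count above, as it is self-contained and makes the role of $\lambda^{<\gamma_R}=\lambda$ transparent.
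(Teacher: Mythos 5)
Your proof is correct, but it takes a genuinely different route from the paper. The paper does not count types at all: it invokes Fact \ref{uni-mod} (Eklof's theorem that $\lambda^{<\gamma_R}=\lambda$ gives an injective universal model $U$ of cardinality $\lambda$), then uses Proposition \ref{easyp} to show that the chain of finite direct sums $U, U\oplus U, \dots$ is a chain of universal extensions, so its union is a $(\lambda,\omega)$-limit model, and concludes $\lambda$-stability from Fact \ref{existence} (existence of a limit model implies stability). Your argument instead bounds $|\gS(M)|$ directly: using the lemma identifying Galois-types with quantifier-free types, you code each realized quantifier-free $1$-type by the pair $(I_b,f_b)$ with $I_b=\{r: rb\in M\}$ a left ideal and $f_b\colon I_b\to M$ a homomorphism, and then the hypothesis $\lambda^{<\gamma_R}=\lambda$ (together with $|R|\le\lambda$) bounds both the number of left ideals and the number of homomorphisms out of each by $\lambda$. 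This coding and counting is sound (the only cosmetic slip is that "choosing generators" gives a surjection from small subsets of $R$ onto ideals rather than an injection of ideals into subsets, but the cardinality bound is the same), and in effect it re-proves at the level of type-counting the cardinal-arithmetic content that the paper imports wholesale from Eklof. What each approach buys: yours is self-contained, works purely with $1$-types, and makes the exact role of $\gamma_R$ and of $\lambda^{<\gamma_R}=\lambda$ transparent; the paper's proof is shorter given the cited facts and, more importantly, produces an explicit $(\lambda,\omega)$-limit model built from copies of an injective universal module, a construction that is reused later in the section (e.g., in the analysis of limit models and Proposition \ref{count-n}). Your closing remark essentially recovers the paper's route, so you have both arguments available.
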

\begin{proof}
By Fact \ref{uni-mod} there is $U$ an injective universal model in $(R\text{-Mod}, \subseteq_R)_\lambda$. Build $\{ N_i : i < \omega \}$ by induction such that $N_i$ is equal to $(i+1)$-many direct copies of $U$.

Since $U$ is injective and injective objects are closed under finite direct sums, it follows that $N_i$ is injective for every $i < \omega$. Moreover, by Proposition \ref{easyp} it follows that $N_{i+1}$ is universal over $N_i$ for every $i < \omega$. Let $N= \bigcup_{i<\omega} N_i$. Observe that $N$ is a $(\lambda, \omega)$-limit model, so by Fact \ref{existence} it follows that $(R\text{-Mod}, \subseteq_R)$ is $\lambda$-stable. \end{proof}

From the above theorem and Fact \ref{existence} it follows that there is a $(\lambda, \alpha)$-limit model for every $\alpha < \lambda^+$ limit ordinal and cardinal $\lambda$ such that $\lambda^{<\gamma_R}=\lambda$. The next lemma characterizes limit models in $(R\text{-Mod}, \subseteq_R)$.

\begin{lemma}\label{lim-inj} Let $R$ be a ring, $\lambda$ be an infinite cardinal with $\lambda \geq \gamma_R + |R| + \aleph_0$ and $\alpha < \lambda^+$ be a limit ordinal.
If $M$ is a $(\lambda, \alpha)$-limit model in $(R\text{-Mod}, \subseteq_R)$ and $\cof(\alpha)\geq \gamma_R$, then $M$ is injective.
\end{lemma}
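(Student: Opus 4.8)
The plan is to establish injectivity of $M$ through Baer's criterion: it suffices to show that for every left ideal $I$ of $R$ and every homomorphism $f : I \to M$ there is a homomorphism $\tilde{f} : R \to M$ extending $f$. So I would fix a chain $\{M_i : i < \alpha\} \subseteq (R\text{-Mod})_\lambda$ witnessing that $M$ is a $(\lambda,\alpha)$-limit model, so that each $M_{i+1}$ is universal over $M_i$, the chain is increasing continuous, and $M = \bigcup_{i<\alpha} M_i$. Given $I$ and $f$, the definition of $\gamma_R$ provides a generating set $X$ of $I$ with $|X| < \gamma_R$, and then $f(I)$ is contained in the submodule of $M$ generated by $f(X)$, a set of fewer than $\gamma_R$ elements.

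The first key step is to absorb $f$ into a single member of the chain, and this is exactly where the hypothesis $\cof(\alpha) \geq \gamma_R$ enters. For each $x \in f(X)$ I would pick $j_x < \alpha$ with $x \in M_{j_x}$; since $|f(X)| < \gamma_R \leq \cof(\alpha)$, the set $\{ j_x : x \in f(X)\}$ has cardinality strictly below $\cof(\alpha)$ and is therefore bounded in $\alpha$. Hence there is $i < \alpha$ with $f(X) \subseteq M_i$, and so $f(I) \subseteq M_i$, allowing us to regard $f$ as a homomorphism $f : I \to M_i$. The second key step is to extend $f$ using universality. I would form the pushout of the inclusion $I \hookrightarrow R$ along $f$, namely $P := (M_i \oplus R)/\{(f(a), -a) : a \in I\}$, with canonical maps $\iota : M_i \to P$ and $g : R \to P$ satisfying $\iota \circ f = g\rest_I$. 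Because $I \hookrightarrow R$ is a monomorphism, $\iota$ is a monomorphism, so $M_i \subseteq_R P$; moreover $\|P\| = \lambda$ since $\|M_i\| = \lambda$ and $|R| \leq \lambda$. As $M_{i+1}$ is universal over $M_i$ and $P \in (R\text{-Mod})_\lambda$ with $M_i \subseteq_R P$, there is $h : P \xrightarrow[M_i]{} M_{i+1}$. Then $\tilde{f} := h \circ g : R \to M_{i+1} \subseteq M$ extends $f$, because for $a \in I$ one has $h(g(a)) = h(\iota(f(a))) = f(a)$, using that $h$ fixes $M_i$ and $f(a) \in M_i$. Baer's criterion then yields that $M$ is injective.

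The conceptual crux is recognizing that the problem should be reduced to a single link $M_i \subseteq_R M_{i+1}$ of the chain and then solved by testing universality against the pushout $P$. I expect the main subtlety to be that $f$ need not be injective, so one cannot directly invoke the amalgamation property of $(R\text{-Mod}, \subseteq_R)$ (which concerns embeddings); the pushout construction is what circumvents this, and the only point requiring care there is the standard verification that the pushout of a module homomorphism along a monomorphism keeps the other leg $\iota$ injective. The remaining ingredients — the boundedness of a set of size $< \cof(\alpha)$ in $\alpha$, the cardinality bound $\|P\| = \lambda$, and the computation $\tilde f\rest_I = f$ — are routine, so the decisive input is really the cofinality condition $\cof(\alpha) \geq \gamma_R$ matching the number of generators of an arbitrary left ideal.
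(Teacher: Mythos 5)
Your proof is correct, and its skeleton is the same as the paper's: reduce injectivity to a test involving fewer than $\gamma_R$ parameters, use $\cof(\alpha)\geq\gamma_R$ to absorb those parameters into a single $M_i$, and then invoke universality of $M_{i+1}$ over $M_i$. The only real difference is the injectivity criterion and the resulting bookkeeping. The paper quotes Lemma 2 of \cite{eklof}: it suffices to solve in $M$ every system $\{r_\delta x = a_\delta : \delta < \beta\}$, $\beta < \gamma_R$, that is solvable in some extension $M'$ of $M$; since the solvable extension is part of the hypothesis, the paper only has to locate the $a_\delta$'s in some $M_i$ and embed $M'$ (taken of size $\lambda$, an implicit L\"{o}wenheim--Skolem-type reduction) into $M$ over $M_i$, so no module-theoretic construction is needed. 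You instead use Baer's criterion directly, which forces you to manufacture the test extension yourself via the pushout $P$ of $I \hookrightarrow R$ along $f$; the points you flag --- injectivity of the leg $\iota$, $\|P\|=\lambda$, and identifying $M_i$ with its image so that universality literally applies --- are exactly the extra (routine) verifications this costs. What your route buys is self-containment: you avoid citing Eklof's equational criterion and the cardinality-reduction step comes out automatically from the pushout computation, at the price of a slightly longer argument.
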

\begin{proof}
By \cite[Lemma 2]{eklof} it is enough to show that if $\mathbb{E}=\{ r_{\delta} x= a_\delta : \delta < \beta \}$  is a system of equations in one free variable $x$ with $\beta < \gamma_R$ and $r_\delta \in R$, $a_\delta \in M$ for every $\delta < \beta$ and $\mathbb{E}$  has a solution in an extension of $M$, then $\mathbb{E}$ has a solution in $M$. 

Let $\mathbb{E}$ be a system of equations as in the previous paragraph, $M'\in (R\text{-Mod}, \subseteq_R)_\lambda$ be an extension of $M$ with $b \in M'$ realizing $\mathbb{E}$ and $\{ M_i : i < \alpha \}$ be a witness to the fact that $M$ is a $(\lambda, \alpha)$-limit model. Since $\beta < \gamma_R$ and $\cof(\alpha)\geq \gamma_R$, there is $i < \alpha$ such that $\{a_\delta : \delta < \beta  \} \subseteq M_i$. Since $M_{i+1}$ is universal over $M_i$ there is $f: M' \xrightarrow[M_i]{} M$. It is clear that $f(b) \in M$ realizes $\mathbb{E}$. \end{proof}

Using the above lemma, we can obtain an equivalence in Lemma \ref{sta-m}

\begin{cor}\label{st-no}
Let $R$ be a ring and $\lambda$ be an infinite cardinal with $\lambda \geq (|R| + \aleph_0)^+$.  $\lambda^{<\gamma_R}=\lambda$ if and only if $(R\text{-Mod}, \subseteq_R)$ is $\lambda$-stable.
\end{cor}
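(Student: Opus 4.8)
The plan is to prove the two implications separately, and in fact only the converse has real content. For the forward direction, assuming $\lambda^{<\gamma_R}=\lambda$, Lemma \ref{sta-m} immediately gives that $(R\text{-Mod},\subseteq_R)$ is $\lambda$-stable; note the stronger hypothesis $\lambda \geq (|R|+\aleph_0)^+$ is not even needed here, as Lemma \ref{sta-m} only requires $\lambda \geq |R|+\aleph_0$. So the substance of the corollary is the converse: that $\lambda$-stability forces $\lambda^{<\gamma_R}=\lambda$.

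For the converse the strategy is to manufacture, out of $\lambda$-stability, an injective universal model of cardinality $\lambda$, and then invoke Fact \ref{uni-mod}, which states precisely that the existence of such a model is equivalent to $\lambda^{<\gamma_R}=\lambda$. First I would fix a limit ordinal $\alpha < \lambda^+$ whose cofinality is at least $\gamma_R$; concretely I take $\alpha = (|R|+\aleph_0)^+$, which is a successor cardinal (hence regular, so $\cof(\alpha)=\alpha$) and satisfies $\gamma_R \leq |R|^+ \leq (|R|+\aleph_0)^+ = \cof(\alpha)$. Since $\lambda$-stability holds, Fact \ref{existence} produces a $(\lambda,\alpha)$-limit model $M$. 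Because $\lambda \geq (|R|+\aleph_0)^+ \geq \gamma_R + |R| + \aleph_0$ and $\cof(\alpha)\geq \gamma_R$, Lemma \ref{lim-inj} applies and shows $M$ is injective. Finally, $(R\text{-Mod},\subseteq_R)$ has amalgamation and joint embedding, so by Fact \ref{univ} the limit model $M$ is universal in $(R\text{-Mod},\subseteq_R)_\lambda$. Thus $M$ is an injective universal model of cardinality $\lambda$, and Fact \ref{uni-mod} yields $\lambda^{<\gamma_R}=\lambda$, as desired.

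The single point that requires care — and the reason the hypothesis is strengthened from $\lambda \geq |R|+\aleph_0$ to $\lambda \geq (|R|+\aleph_0)^+$ — is the availability of such an $\alpha$. To feed Lemma \ref{lim-inj} I need a limit ordinal $\alpha$ with $\cof(\alpha)\geq \gamma_R$, while for the resulting limit model to have cardinality $\lambda$ I need $\alpha < \lambda^+$, i.e. $\alpha \leq \lambda$. When $\gamma_R = |R|^+$ (rings far from noetherian) and $\lambda = |R|+\aleph_0$, any ordinal $\alpha \leq \lambda$ has $\cof(\alpha)\leq \lambda < \lambda^+ = \gamma_R$, so the argument would collapse; the assumption $\lambda \geq (|R|+\aleph_0)^+$ is exactly what guarantees that $\alpha = (|R|+\aleph_0)^+ \leq \lambda$ is an admissible choice of the required cofinality. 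Beyond this cardinal-arithmetic bookkeeping the proof is a direct chaining of the quoted facts, so I anticipate no further obstacle.
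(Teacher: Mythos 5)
Your proof is correct and follows essentially the same route as the paper: the forward direction is Lemma \ref{sta-m}, and the converse uses $\lambda$-stability to produce (via Fact \ref{existence}) a limit model of cofinality at least $\gamma_R$, which is injective by Lemma \ref{lim-inj} and universal by Fact \ref{univ}, so that Fact \ref{uni-mod} gives $\lambda^{<\gamma_R}=\lambda$. Your explicit choice $\alpha=(|R|+\aleph_0)^+$ and the accompanying cardinal-arithmetic checks simply make precise what the paper leaves implicit.
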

\begin{proof}
The forward direction is Lemma \ref{sta-m} and the backward direction follows from the existence of limit models, Lemma \ref{lim-inj} and Fact \ref{uni-mod}.
\end{proof}

Doing a similar proof to that of Lemma \ref{lim-inj} and using the equivalence between saturation and model-homogeneity one can obtain the next result.

\begin{lemma}\label{gst-inj}
Let $\lambda \geq (|R| + \aleph_0)^+$. If $M$ is $\lambda$-saturated in $(R\text{-Mod}, \subseteq_R)$, then $M$ is injective.
\end{lemma}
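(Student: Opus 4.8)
The plan is to follow the proof of Lemma \ref{lim-inj} almost verbatim, but to replace the appeal to universality of a successor model in a limit chain by an appeal to $\lambda$-model-homogeneity. By \cite[Lemma 2]{eklof} it suffices to show that every system $\mathbb{E}=\{ r_\delta x = a_\delta : \delta < \beta\}$ in one free variable $x$, with $\beta < \gamma_R$, $r_\delta \in R$ and $a_\delta \in M$, which has a solution in some extension of $M$, already has a solution in $M$. So I would fix such a system $\mathbb{E}$ and let $M' \in (R\text{-Mod}, \subseteq_R)$ be an extension of $M$ together with $b \in M'$ realizing $\mathbb{E}$.

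First I would bound the number of parameters. Since $\beta < \gamma_R \leq |R|^+$, the set $A := \{ a_\delta : \delta < \beta\}$ has cardinality at most $|R| + \aleph_0 < \lambda$, using $\lambda \geq (|R|+\aleph_0)^+$. By the L\"owenheim-Skolem-Tarski axiom there is $N_0 \subseteq_R M$ with $A \subseteq N_0$ and $\| N_0 \| \leq |R| + \aleph_0 < \lambda$. Inside $M'$, let $N'$ be the submodule generated by $N_0 \cup \{ b \}$; then $N_0 \subseteq_R N' \subseteq_R M'$ and $\| N' \| \leq \| N_0\| + |R| + \aleph_0 = |R| + \aleph_0 < \lambda$.

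Now I would invoke the equivalence between $\lambda$-saturation and $\lambda$-model-homogeneity, which is available because $\lambda > \LS((R\text{-Mod}, \subseteq_R)) = |R| + \aleph_0$. Since $N_0 \subseteq_R M$, $N_0 \subseteq_R N'$ and $\| N'\| < \lambda$, model-homogeneity yields an embedding $f: N' \xrightarrow[N_0]{} M$ fixing $N_0$ pointwise. As $f$ is an $R$-homomorphism fixing each $a_\delta$, for every $\delta < \beta$ we get $r_\delta f(b) = f(r_\delta b) = f(a_\delta) = a_\delta$, so $f(b) \in M$ is a solution of $\mathbb{E}$ in $M$. By the Eklof criterion, $M$ is injective.

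The only delicate point is the cardinal bookkeeping: the hypothesis $\lambda \geq (|R|+\aleph_0)^+$ is exactly what guarantees both that the parameter set (hence $N_0$) and the witness module $N'$ have cardinality strictly below $\lambda$, and that $\lambda > \LS((R\text{-Mod},\subseteq_R))$ so that saturation can be traded for model-homogeneity. No further obstacle arises, since the strong substructure relation here is simply the submodule relation, so all the membership and embedding requirements are automatic.
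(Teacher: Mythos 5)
Your proof is correct and is exactly the argument the paper intends: the paper only states that Lemma \ref{gst-inj} follows ``by a similar proof to that of Lemma \ref{lim-inj}, using the equivalence between saturation and model-homogeneity,'' and your write-up fills in precisely that sketch, with the cardinal bookkeeping ($\beta < \gamma_R \leq |R|^+$, so the witness module $N'$ has size at most $|R|+\aleph_0 < \lambda$) handled correctly.
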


Since a ring $R$ is noetherian if and only if $\gamma_R \leq \aleph_0$ (by Fact \ref{equivnoe}), the next result follows from the results we just obtained in this section.

\begin{cor}\label{noe}
If $R$ is a left noetherian ring, then:
\begin{enumerate}
\item $(R\text{-Mod}, \subseteq_R)$ is $\lambda$-stable for every $\lambda \geq |R| + \aleph_0$.
\item There is a $(\lambda, \alpha)$-limit model in $(R\text{-Mod}, \subseteq_R)$ for every $\lambda \geq |R| + \aleph_0$ and $\alpha < \lambda^+$ limit ordinal.
\item Every limit model in $(R\text{-Mod}, \subseteq_R)$ is injective.
\end{enumerate}
\end{cor}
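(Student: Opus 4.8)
The plan is to deduce all three items directly from the translation of the noetherian hypothesis into the invariant $\gamma_R$. By Fact \ref{equivnoe}, $R$ being left noetherian is equivalent to $\gamma_R \leq \aleph_0$, and the entire point is that this bound trivializes the two side conditions, namely $\lambda^{<\gamma_R}=\lambda$ and $\cof(\alpha) \geq \gamma_R$, that appear as hypotheses in Lemma \ref{sta-m} and Lemma \ref{lim-inj}. Thus none of the three items requires genuinely new work: I would simply verify that these hypotheses hold automatically under $\gamma_R \leq \aleph_0$ and then invoke the lemmas already proved in this section.

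For item (1), I would first record that $\gamma_R \leq \aleph_0$ forces $\lambda^{<\gamma_R}=\lambda$ for every infinite $\lambda$. Indeed, every cardinal $\mu < \gamma_R$ is finite, so $\lambda^{\mu}=\lambda$ for infinite $\lambda$ (and $\lambda^0 = 1 \leq \lambda$), and taking the supremum over such $\mu$ gives $\lambda^{<\gamma_R}=\lambda$. Since any $\lambda \geq |R| + \aleph_0$ is infinite, Lemma \ref{sta-m} then yields that $(R\text{-Mod}, \subseteq_R)$ is $\lambda$-stable. For item (2), I would combine the $\lambda$-stability just obtained with Fact \ref{existence}: by the Remark opening this section the class is an AEC with amalgamation, joint embedding and no maximal models, so $\lambda$-stability provides, for every $N \in (R\text{-Mod}, \subseteq_R)_\lambda$ and every limit ordinal $\alpha < \lambda^+$, a $(\lambda, \alpha)$-limit model over $N$.

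For item (3), I would take an arbitrary limit model $M$, say a $(\lambda, \alpha)$-limit model with $\alpha < \lambda^+$ a limit ordinal. Any such $M$ has cardinality $\lambda \geq |R| + \aleph_0$ (this is the relevant regime, $|R|+\aleph_0$ being the L\"owenheim--Skolem number of the class), and since $\gamma_R \leq \aleph_0$ this gives $\lambda \geq \gamma_R + |R| + \aleph_0$, so the size hypothesis of Lemma \ref{lim-inj} is met. The cofinality hypothesis is automatic: because $\alpha$ is a limit ordinal, $\cof(\alpha)$ is an infinite regular cardinal, whence $\cof(\alpha) \geq \aleph_0 \geq \gamma_R$. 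Lemma \ref{lim-inj} then gives that $M$ is injective.

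The proof has essentially no obstacle, since the substantive content lives in Lemma \ref{sta-m} and Lemma \ref{lim-inj}; the only point demanding care is the elementary cardinal-arithmetic check that $\gamma_R \leq \aleph_0$ makes both side conditions hold for all infinite $\lambda$ and all limit $\alpha$. The one thing worth stressing is that the argument for item (3) must cover \emph{every} limit model and not merely those built along a chain of large cofinality, and this is precisely where $\gamma_R \leq \aleph_0$ does the work, collapsing the requirement $\cof(\alpha) \geq \gamma_R$ of Lemma \ref{lim-inj} to a condition satisfied by every limit ordinal.
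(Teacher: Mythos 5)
Your proposal is correct and matches the paper's intended argument: the paper derives the corollary by noting (via Fact \ref{equivnoe}) that noetherian means $\gamma_R \leq \aleph_0$, which makes the hypotheses $\lambda^{<\gamma_R}=\lambda$ of Lemma \ref{sta-m} and $\cof(\alpha)\geq\gamma_R$ of Lemma \ref{lim-inj} automatic, with item (2) then coming from Fact \ref{existence}. Your write-up just makes these routine cardinal-arithmetic checks explicit (including the implicit convention that limit models here have cardinality at least $|R|+\aleph_0$), so nothing further is needed.
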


Moreover, the analogous of \cite[4.9]{kuma} can also be carried out in this context. Since the proof of the proposition is basically the same as that of \cite[4.9]{kuma} we omit it.

\begin{prop}\label{count-n}
Assume $\lambda\geq
(|R| + \aleph_0)^+$. If $M$ is
a $(\lambda, \omega)$-limit model in $(R\text{-Mod}, \subseteq_R)$ and  $N$ is a
$(\lambda,(|R| + \aleph_0)^+)$-limit model in $(R\text{-Mod}, \subseteq_R)$, then $M$ is isomorphic to $ N^{(\aleph_0)}$.
\end{prop}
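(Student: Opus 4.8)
The plan is to identify $N^{(\aleph_0)}$ explicitly as a $(\lambda,\omega)$-limit model and then appeal to the uniqueness of limit models of a fixed length. Throughout write $\kappa := (|R|+\aleph_0)^+$.

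First I would check that $N$ is injective. Since $\kappa$ is a successor cardinal it is regular, so $\cof(\kappa)=\kappa\geq |R|^+\geq \gamma_R$, using $\gamma_R\leq |R|^+$. Moreover $\gamma_R\leq |R|^+\leq\kappa\leq\lambda$ and $|R|+\aleph_0<\kappa\leq\lambda$, so $\lambda\geq\gamma_R+|R|+\aleph_0$ and $\kappa<\lambda^+$. Thus the hypotheses of Lemma \ref{lim-inj} hold for the $(\lambda,\kappa)$-limit model $N$, and $N$ is injective. I would also record that, by Fact \ref{univ} (using that $(R\text{-Mod},\subseteq_R)$ has amalgamation and joint embedding), the limit model $N$ is universal in $(R\text{-Mod},\subseteq_R)_\lambda$.

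Next I would realize $N^{(\aleph_0)}$ as a $(\lambda,\omega)$-limit model over $N$. Put $M_k:=N^{(k+1)}$ for $k<\omega$. Each $M_k$ is a finite direct sum of copies of $N$, hence lies in $(R\text{-Mod},\subseteq_R)_\lambda$ and is injective, since a finite direct sum of injective modules is injective. Writing $M_{k+1}=M_k\oplus N$ with $M_k$ injective of size $\lambda$ and $N$ universal in $(R\text{-Mod},\subseteq_R)_\lambda$, Proposition \ref{easyp} yields that $M_{k+1}$ is universal over $M_k$. The chain $\{M_k:k<\omega\}$ is increasing with $M_0=N$ — continuity being vacuous, as there is no limit ordinal below $\omega$ — and $\bigcup_{k<\omega}M_k=N^{(\aleph_0)}$, a module of cardinality $\lambda$. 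Hence $N^{(\aleph_0)}$ is a $(\lambda,\omega)$-limit model.

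Finally, both $M$ and $N^{(\aleph_0)}$ are $(\lambda,\omega)$-limit models, so by the uniqueness of limit models of a fixed length we conclude $M\cong N^{(\aleph_0)}$. The existence of $M$ guarantees $\lambda$-stability by Fact \ref{existence}, which together with amalgamation, joint embedding and no maximal models is exactly what this uniqueness requires. I expect this last step to be the main obstacle: although same-length uniqueness is folklore, if it is not cited directly it must be produced by a back-and-forth between the two defining chains, alternately extending domain and range. The only subtlety there is that the bases $M_0$ and $N$ need not agree, which is handled by noting that a model universal over a submodel is in fact universal in $(R\text{-Mod},\subseteq_R)_\lambda$; the remaining bookkeeping is routine.
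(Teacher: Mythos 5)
Your proof is correct and follows essentially the same route the paper intends: the paper omits the argument, saying it is "basically the same" as \cite[4.9]{kuma}, which is exactly your strategy of showing $N$ is injective (long limit model), realizing $N^{(\aleph_0)}$ as a $(\lambda,\omega)$-limit model via Proposition \ref{easyp}, and invoking uniqueness of limit models of the same length by a back-and-forth.
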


With this we obtain a new characterization of left noetherian rings via superstability.\footnote{Conditions (4) through (6) of the theorem below were motivated by \cite[1.3]{grva}.}

\begin{theorem}\label{main2} For a ring $R$ the following are equivalent.
\begin{enumerate}
\item $R$ is left noetherian.
\item The class of left $R$-modules with embeddings is superstable.
\item For every $\lambda \geq |R| + \aleph_0$, there is $\chi \geq \lambda$ such that the class of left $R$-modules with embeddings  has uniqueness of limit models of cardinality $\chi$.
\item   For every $\lambda \geq |R| + \aleph_0$,  the class of left $R$-modules with embeddings  has uniqueness of limit models of cardinality $\lambda$.
\item   For every $\lambda \geq (|R| + \aleph_0)^+$,  the class of left $R$-modules with embeddings has a superlimit of cardinality $\lambda$.
 \item For every $\lambda \geq |R| + \aleph_0$,  the class of left $R$-modules with embeddings is $\lambda$-stable.
 \item Every limit model in the class of left $R$-modules with embeddings is $\Sigma$-injective. 
\end{enumerate}
\end{theorem}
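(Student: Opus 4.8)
The plan is to treat condition (1), i.e.\ $R$ left noetherian, equivalently $\gamma_R \leq \aleph_0$ by Fact \ref{equivnoe}, as the hub: I would prove $(1) \Rightarrow (k)$ for each $k$ and then supply a return path $(k) \Rightarrow (1)$. The algebraic heart is the identification of limit models with injective modules, together with the two noetherian dichotomies of Fact \ref{equivnoe}: over a left noetherian ring every injective module is $\Sigma$-injective, and direct sums (hence direct limits) of injectives are injective. All the model-theoretic hypotheses I need — amalgamation, joint embedding, no maximal models, tameness — hold for $(R\text{-Mod}, \subseteq_R)$, and limit models exist at cofinally many cardinals, since $\lambda^{<\gamma_R}=\lambda$ holds on a cofinal class of $\lambda$ and forces $\lambda$-stability by Lemma \ref{sta-m}.

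For the forward implications I would argue as follows. $(1) \Rightarrow (6)$ is immediate from Corollary \ref{noe}(1), and $(1) \Rightarrow (7)$ combines Corollary \ref{noe}(3) (limit models are injective) with Fact \ref{equivnoe}(3) (injective $\Rightarrow \Sigma$-injective over a noetherian ring). For $(1) \Rightarrow (4)$ I would use that under (1) every limit model of cardinality $\lambda$ is injective (Corollary \ref{noe}(3)) and universal (Fact \ref{univ}); hence any two limit models of the same cardinality embed into each other, so Bumby's theorem (Fact \ref{ipi}) makes them isomorphic, giving uniqueness at every $\lambda \geq |R|+\aleph_0$. Then $(4) \Rightarrow (2)$ (uniqueness at all $\lambda$ is uniqueness on a tail) and $(2) \Rightarrow (3)$ (a tail is cofinal) are formal. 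For $(1) \Rightarrow (5)$ I would invoke Fact \ref{limits}(4): under (1) the class is stable and its saturated models of size $\lambda$ are exactly the injective modules isomorphic to the unique limit model; the union of an increasing chain of such models is again injective (over a noetherian ring injectives are closed under direct limits), of size $\lambda$, and universal, hence saturated by Fact \ref{ipi}, which is precisely the condition Fact \ref{limits}(4) converts into the existence of a superlimit.

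The return paths are more delicate. For $(6) \Rightarrow (1)$ I would choose $\lambda \geq (|R|+\aleph_0)^+$ of cofinality $\aleph_0$; $\lambda$-stability gives $\lambda^{<\gamma_R}=\lambda$ by Corollary \ref{st-no}, and if $\gamma_R > \aleph_0$ then $\lambda^{<\gamma_R} \geq \lambda^{\aleph_0} > \lambda$ by K\"onig, a contradiction, so $\gamma_R \leq \aleph_0$. The implications $(3), (5), (7) \Rightarrow (1)$ share a common core. Fix a cofinal class of $\lambda \geq (|R|+\aleph_0)^+$ at which limit models exist, and let $N$ be a $(\lambda, (|R|+\aleph_0)^+)$-limit model; since $\cof((|R|+\aleph_0)^+) \geq \gamma_R$, Lemma \ref{lim-inj} makes $N$ injective. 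In case (7) the hypothesis directly gives that $N$ is $\Sigma$-injective. In case (3), uniqueness identifies the $(\lambda,\omega)$-limit model with $N$, while Proposition \ref{count-n} identifies the $(\lambda,\omega)$-limit model with $N^{(\aleph_0)}$; hence $N \cong N^{(\aleph_0)}$ is injective and $N$ is $\Sigma$-injective by Fact \ref{c-inj}. In case (5), once one knows a superlimit forces $\lambda$-stability, Fact \ref{limits}(3) makes the superlimit a $(\lambda,\alpha)$-limit model for every limit $\alpha$, reducing (5) to the situation of (3). In all three cases $N$ is $\Sigma$-injective; since $N$ is universal (Fact \ref{univ}), every injective module $E$ of size $\leq \lambda$ embeds into $N$ as a direct summand, so $E^{(\aleph_0)}$ is a summand of the injective module $N^{(\aleph_0)}$ and is therefore injective, i.e.\ $E$ is $\Sigma$-injective by Fact \ref{c-inj}. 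Letting $\lambda$ range over the cofinal class, every injective module is $\Sigma$-injective, whence $R$ is noetherian by Fact \ref{equivnoe}(3).

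I expect the main obstacle to be exactly this common core, and especially $(3) \Rightarrow (1)$. The subtlety is that, unlike $(6)$, condition (3) does not let us choose the cardinals $\chi$ at which uniqueness holds, so the purely cardinal-arithmetic argument behind $(6) \Rightarrow (1)$ is unavailable: there are cofinally many $\chi$ with $\chi^{\aleph_0}=\chi$ even when $\gamma_R=\aleph_1$. One is therefore forced to extract genuine algebraic content from uniqueness, and the key device is Proposition \ref{count-n}, which converts ``the $\omega$-limit and the $(|R|+\aleph_0)^+$-limit coincide'' into the concrete isomorphism $N \cong N^{(\aleph_0)}$; the passage from ``$N$ is $\Sigma$-injective'' to ``every injective is $\Sigma$-injective'' then hinges on combining universality with the stability of injectivity under direct summands. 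A secondary technical point is the input ``a superlimit of size $\lambda$ implies $\lambda$-stability'' used for (5), which I would take from the established theory of superlimits underlying Fact \ref{limits} rather than reprove.
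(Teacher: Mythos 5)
Most of your proposal tracks the paper's own argument: the same hub (1), the same key ingredients (Lemma \ref{sta-m}, Lemma \ref{lim-inj}, Corollary \ref{st-no}, Proposition \ref{count-n}, Fact \ref{univ}, Bumby's theorem Fact \ref{ipi}), and the same K\"{o}nig-lemma argument for (6) $\Rightarrow$ (1). Two of your variations are harmless and even slightly cleaner: your (1) $\Rightarrow$ (7) is immediate from Corollary \ref{noe}(3) plus Fact \ref{equivnoe}(3) (the paper takes a longer route through $\Sigma$-pure-injectivity), and in the common core for (3), (7) $\Rightarrow$ (1) you pass from ``the limit model $N$ satisfies $N\cong N^{(\aleph_0)}$ and is injective'' to ``every injective $E$ is $\Sigma$-injective'' by writing $E$ as a direct summand of $N$ and using Fact \ref{c-inj}, where the paper instead uses Facts \ref{pp-inj} and \ref{sigma-n}; both are valid, and the paper's closing of the loop via (7) $\Rightarrow$ (3) rather than (7) $\Rightarrow$ (1) is immaterial.

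The genuine gap is in your treatment of (5). You reduce (5) to the situation of (3) via the input ``a superlimit of cardinality $\lambda$ implies $\lambda$-stability,'' which you propose to cite from ``the established theory of superlimits underlying Fact \ref{limits}.'' No such fact is available: Fact \ref{limits}(3) \emph{assumes} $\lambda$-stability as a hypothesis, and the single-cardinal implication is false in general AECs with amalgamation, joint embedding and no maximal models --- for instance $(\mathbb{Q},<)$ is a superlimit of cardinality $\aleph_0$ in the class of dense linear orders without endpoints with elementary embeddings (it is universal, every countable model embeds into it with proper image, and countable unions of copies of it are again isomorphic to it), yet that theory is unstable, hence not $\aleph_0$-stable. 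The paper avoids this entirely: given $\lambda$, it sets $\chi=(\lambda^+)^{\gamma_R}$, gets $\chi$-stability unconditionally from Lemma \ref{sta-m}, and only then feeds the superlimit of cardinality $\chi$ supplied by (5) into Fact \ref{limits}(3) to identify all limit models of cardinality $\chi$, which yields uniqueness at cofinally many cardinals and hence closes the cycle. Since you already observed that $\lambda^{<\gamma_R}=\lambda$ holds on a cofinal class of cardinals and forces stability there, the repair costs nothing; but as written, your (5) $\Rightarrow$ (1) rests on an unjustified (indeed generally false) step.
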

\begin{proof}
(1) $\Rightarrow$ (4) Let $\lambda \geq |R| + \aleph_0$. By Corollary \ref{noe}.(2) there is $(\lambda, \alpha)$-limit models for every $\alpha < \lambda^+$ limit ordinal. So we only need to show uniqueness of limit models. Let $M$ and $N$  be two limit models of cardinality $\lambda$. By Corollary \ref{noe}.(3) $M$ and $N$ are injective and since $M$ embeds into $N$ and vice versa by Fact \ref{univ}, it follows from Fact \ref{ipi} that $M$ is isomorphic to $N$.

(4) $\Rightarrow$ (2) and (2) $\Rightarrow$ (3) are clear.

(3) $\Rightarrow$ (1) We will use the equivalence given in Fact \ref{equivnoe}.(3), so let $M$ be an injective module. Since $M$ is injective, it is absolutely pure so by Fact \ref{sigma-n} it is enough to show that $M$ is $\Sigma$-pure-injective. Let $\chi \geq (\| M \| + |R| + \aleph_0)^+$ such that $(R\text{-Mod}, \subseteq_R)$ has uniqueness of limit models of cardinality $\chi$. Let $N$ be a $(\chi, (|R| + \aleph_0)^+)$-limit model such that $M \subseteq_R N$.  By Proposition \ref{count-n} $N^{(\aleph_0)}$ is a  $(\chi, \omega )$-limit model, then by uniqueness of limit models and Lemma \ref{lim-inj} (using that $(|R| + \aleph_0)^+\geq \gamma_R$)  $N^{(\aleph_0)}$ is injective. Then $N$ is $\Sigma$-pure-injective. Since $M$ is injective, it follows that $M$ is $\Sigma$-pure-injective by Fact \ref{pp-inj}.


(1) $\Rightarrow$ (5) Let $\lambda \geq  (|R| + \aleph_0)^+$. By Corollary \ref{noe}.(1) $(R\text{-Mod}, \subseteq_R)$ is $\lambda$-stable, so let $M$ be a  $(\lambda, (|R| + \aleph_0)^+)$-limit model. Then by condition (4) together with Fact \ref{limits}.(2) $M$ is saturated. So by Fact \ref{limits}.(4) it is enough to show that an increasing chain of saturated models in $(R\text{-Mod}, \subseteq_R)_\lambda$ is saturated. Let $\{ M_i : i <\delta\}$ be an increasing chain of saturated models in $(R\text{-Mod}, \subseteq_R)_\lambda$ and $\delta < \lambda^+$ be a limit ordinal.

Using that $\gamma_R \leq \aleph_0$ and that $M_i$ is injective for every $i < \delta$ (by Lemma \ref{gst-inj}), one can show that $\bigcup_{i < \delta} M_i$ is an injective module. Moreover, $\bigcup_{i < \delta} M_i$ embeds into $M_0$, because $M_0$ saturated, and $M_0$ embeds into  $\bigcup_{i < \delta} M_i$. Then by Fact \ref{ipi} $M_0$ is isomorphic to $\bigcup_{i < \delta} M_i$. Hence,  $\bigcup_{i < \delta} M_i$ is saturated.

(5) $\Rightarrow$ (2) Let $\lambda \geq |R| + \aleph_0$ and $\chi=(\lambda^+)^{\gamma_R}$. By Lemma \ref{sta-m} $(R\text{-Mod}, \subseteq_R)$ is $\chi$-stable. Therefore there are $(\chi, \alpha)$-limit models for every $\alpha < \chi^+$ limit ordinal. So we only need to show uniqueness of limit models. Let $M$ and $N$ be two limit models of cardinality $\chi$. Let $L$ be a superlimit of size $\chi$, then by Fact \ref{limits}.(3) $M$ is isomorphic to $L$ and $N$ is isomorphic to $L$. Hence $M$ is isomorphic to $N$.

(1) $\Rightarrow$ (6) By Corollary \ref{noe}.(1).

(6) $\Rightarrow$ (1) Assume for the sake of contradiction that $R$ is not noetherian, then it follows that $\gamma_R > \aleph_0$. Let $\lambda = \beth_\omega( |R| + \aleph_0)$. Since $\cof(\lambda)=\omega$, we have by K\"{o}nigs lemma that $\lambda^{<\gamma_R} > \lambda$. Then by Corollary \ref{st-no} it follows that $(R\text{-Mod}, \subseteq_R)$ is not $\lambda$-stable. A contradiction to the hypothesis.

(1) $\Rightarrow$ (7) Let $M$ be a $(\lambda, \alpha)$-limit model for $\alpha < \lambda^+$ limit ordinal. Let $\mu \geq (\| M \| + |R| + \aleph_0)^+$ and $N$ be a $(\mu, (|R| + \aleph_0)^+)$-limit model such that $M\subseteq_R N$, it exists by Corollary \ref{noe}.(2). By Proposition \ref{count-n} $N^{(\aleph_0)}$ is a  $(\mu, \omega )$-limit model, then by Corollary \ref{noe}.(3) $N^{(\aleph_0)}$ is injective. So $N$ is $\Sigma$-pure-injective. Therefore, since $M$ is injective by Corollary \ref{noe}.(3), it follows that $M$ is $\Sigma$-pure-injective by Fact \ref{pp-inj}. Hence $M$ is $\Sigma$-injective by Fact \ref{sigma-n}.

(7) $\Rightarrow$ (3) Let $\lambda \geq |R| + \aleph_0$ and let $\chi \geq \lambda$ such that  $(R\text{-Mod}, \subseteq_R)$ is $\chi$-stable, this exist by Corollary \ref{st-no}. There are clearly limit models of cardinality $\chi$, so we only need to show uniqueness of limit models. Let $M$ and $N$ be two limit models of cardinality $\chi$. By hypothesis $M$ and $N$ are injective and since $M$ embeds into $N$ and vice versa by Fact \ref{univ}, it follows from Fact \ref{ipi} that $M$ is isomorphic to $N$. \end{proof}

This is not the first result where noetherian rings and superstability have been related. As it was mentioned in the introduction,  Shelah noticed that superstability of the theory of modules implies that the ring is left noetherian in \cite[\S 8]{shlaz}. The precise equivalence he noticed is similar to that of (6) implies (1) of the above theorem. For a countable $\omega$-stable ring a proof is given in \cite[9.1]{balmc}. The equivalence between (1) and (6) is new. Another paper that relates both notions is \cite{grsh}. In it, it is shown (for integral domains) that superstability (in the sense of \cite[1.2]{grsh}) of the class of torsion divisible modules implies that the ring is noetherian.

\begin{remark}\label{remk}
Compared to \cite[1.3]{grva}, the above theorem improves the bounds where the \emph{nice propertis} show up from $\beth_{(2^{|R| + \aleph_0})^+}$ to $|R|+\aleph_0$ or $(|R|+\aleph_0)^+$ in the class of modules with embeddings.  In the introduction of \cite{grva} is asked if this bounds can be improved.
\end{remark}

\begin{remark} Since $(R\text{-Mod}, \subseteq_R)$ has amalgamation, joint embedding, no maximal models and is $(< \aleph_0)$-tame (by Corollary \ref{tame-mod}) . Therefore, we could have simply quoted the main theorem of \cite{grva} and \cite{vaseyt} to obtain (5),(6) imply (2) of the above theorem. We decided to provide the proofs for those directions to make the paper more transparent and since the proofs in our case are easier than in the general case. 
\end{remark}

\section{A new characterization of pure-semisimple rings}

It is possible to obtain a similar proof for Theorem \ref{main} (without conditions (7) and (8)) as the one presented for Theorem \ref{main2}. The reason we do not do this is because there is a deep theory when one considers theories of modules with pure embeddings. What we will do is to study superstability for any first-order theory of modules with pure embeddings and as a simple corollary we will obtain Theorem \ref{main}.

 Recall that for $T$ a first-order theory of modules (not necessarily complete) $\K^{T}= ( Mod(T),
\leq_{pp})$. In \cite[3.4]{kuma}, it is shown that if $T$ is a theory of modules then $\K^T$ is an AEC. Most interesting results do not hold for all AECs, so we will assume, as in \cite{kuma}, the next hypothesis throughout this section.

\begin{hypothesis}\label{hyp}
Let $R$ be a ring and $T$ be a theory of modules with an infinite model such that:
\begin{enumerate}
\item $\K^T$ has joint embedding.
\item $\K^T$ has amalgamation. 
\end{enumerate}
\end{hypothesis}

These may seem like \textit{adhoc} hypothesis, but there are many natural theories satisfying them. This is the case if $T$ is a complete theory, but many other examples are given in \cite[3.10]{kuma}. For the proof of the main theorem, we will only use the well-known result that  $\K^{\textbf{Th}_R}=(R\text{-Mod}, \leq_{pp})$ satisfies the above hypothesis.

Since the theory of modules has $pp$-quantifier elimination (see for example  \cite[1.1]{ziegler}), one can show the following.
\begin{fact}[{\cite[3.14]{kuma}}]\label{pp=gtp}
Let $M, N_1, N_2 \in \K^T$,  $M \leq_{pp} N_1, N_2$, $\bar{b}_{1}
\in  N_1^{<\omega}$  and $\bar{b}_{2} \in N_2^{<\omega}$. Then:
 \[ \gtp(\bar{b}_{1}/M; N_1) = \gtp(\bar{b}_{2}/M; N_2) \text{ if
and
only if } pp(\bar{b}_{1}/M , N_1) = pp(\bar{b}_{2}/M, N_2).\]

Moreover, if $N_1 \equiv N_2$ one can substitute the $pp$-types by the first-order types (\cite[3.13]{kuma}).
\end{fact}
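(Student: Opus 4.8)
The plan is to prove the two directions separately, treating the forward direction as routine and concentrating on the backward direction, where the module-theoretic content lies. For the forward direction, if $\gtp(\bar b_1/M;N_1)=\gtp(\bar b_2/M;N_2)$, then unwinding the transitive closure $E$ of $E_{\text{at}}$ gives a finite chain of models and pure embeddings over $M$ connecting the two triples; since pure embeddings both preserve and reflect the truth of $pp$-formulas and fix $M$ pointwise, the $pp$-type is an invariant along each $E_{\text{at}}$-step, so $pp(\bar b_1/M,N_1)=pp(\bar b_2/M,N_2)$.

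For the backward direction, first I would use amalgamation (Hypothesis \ref{hyp}) applied to $M\leq_{pp} N_1$ and $M\leq_{pp} N_2$ to reduce to the case $N_1=N_2=N$, with $\bar b_1,\bar b_2\in N$ having the same $pp$-type over $M$; this is legitimate because the amalgamating pure embeddings fix $M$ and hence preserve $pp$-types. Next, exactly as in the embedding case sketched earlier, I would define $f\colon \langle \bar b_1 M\rangle \to \langle \bar b_2 M\rangle$ over $M$ by $f(\sum_i r_i b_{1,i}+m)=\sum_i r_i b_{2,i}+m$. Well-definedness and injectivity of $f$ require only that the two tuples satisfy the same linear equations with parameters in $M$, which is part of the hypothesis $pp(\bar b_1/M,N)=pp(\bar b_2/M,N)$; thus $f$ is an $R$-isomorphism fixing $M$ with $f(\bar b_1)=\bar b_2$.

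The crucial step is to promote this isomorphism of (generally non-pure) submodules to an amalgam witnessing equality of Galois-types. Here the argument must diverge from the embedding case, where $\langle \bar b_\ell M\rangle$ is already a legitimate substructure: since $\langle \bar b_\ell M\rangle$ need not be pure in $N$, I cannot simply apply amalgamation over it. Instead I would form the pushout $P=(N\oplus N)/K$ with $K=\{(a,-f(a)):a\in\langle \bar b_1 M\rangle\}$, together with the two structure maps $g_1,g_2\colon N\to P$. A direct check shows $g_1,g_2$ are injective, agree on $M$, and satisfy $g_1(\bar b_1)=g_2(\bar b_2)$. The hard part will be verifying that $g_1$ (and symmetrically $g_2$) is a \emph{pure} embedding, and this is exactly where the full strength of equal $pp$-types is used rather than merely the linear fragment: given a $pp$-formula $\phi(\bar x)$ over $M$ with $P\models\phi(g_1(\bar c))$, one unwinds the witnesses in the pushout, reads off from the second coordinate a $pp$-condition satisfied by $\bar b_2$ over $M$, transfers it to $\bar b_1$ using $pp(\bar b_1/M,N)=pp(\bar b_2/M,N)$, and recombines the two witnesses to produce a witness for $\phi(\bar c)$ inside $N$. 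This yields $N\leq_{pp} P$ via $g_1$ and via $g_2$.

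Finally I would place the amalgam inside $\K^T$. When $T=\textbf{Th}_R$ — the only case actually needed for Theorem \ref{main} — every module is a model, so $P\in\K^T$ and $g_1,g_2$ are $\K^T$-embeddings over $M$ identifying $\bar b_1$ and $\bar b_2$, giving $\gtp(\bar b_1/M;N)=\gtp(\bar b_2/M;N)$; for a general theory of modules one uses amalgamation together with $pp$-quantifier elimination to embed the amalgam purely into a model of $T$. The ``moreover'' clause, for $N_1\equiv N_2$, then follows because Baur--Monk and $pp$-elimination of quantifiers reduce complete first-order types over $M$ to $pp$-types together with the common complete theory, so equality of $pp$-types is equivalent to equality of first-order types. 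The single genuine obstacle is the purity verification of the pushout maps in the third paragraph.
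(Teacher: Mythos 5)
Your forward direction, the amalgamation reduction to $N_1=N_2=N$, and even the step you single out as the crucial obstacle all work: for a pp-formula $\phi$ with $P\models \phi(g_1(\bar{c}))$, projecting the defining equations of the pushout to the second coordinates yields a pp-condition over $M$ satisfied by $\bar{b}_2$, transferring it to $\bar{b}_1$ produces correcting witnesses in $N$, and adding these to the first-coordinate witnesses gives $N\models\phi(\bar{c})$; symmetrically for $g_2$. The genuine gap is the step you treat as routine: placing the amalgam in $\K^T$ for a general theory of modules $T$. The claim that $P$ can be purely embedded into a model of $T$ is false in general. Take $R=\mathbb{Z}$ and $T=Th(\mathbb{Q})$, so that $\K^T$ is the class of nontrivial torsion-free divisible groups with pure embeddings (it satisfies Hypothesis \ref{hyp} and is even closed under direct sums). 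Let $N=\mathbb{Q}^2$, $M=\mathbb{Q}\times 0$, $\bar{b}_1=(0,1)$, $\bar{b}_2=(0,2)$; these have the same pp-type over $M$, since the automorphism $(x,y)\mapsto (x,2y)$ of $N$ fixes $M$ pointwise (so the Fact itself holds here, witnessed inside $N$). But $\langle \bar{b}_1 M\rangle=\mathbb{Q}\times\mathbb{Z}$, and in your pushout the element $\bigl((0,1/2),(0,-1)\bigr)+K$ has order two, so $P$ has torsion and admits no embedding at all --- pure or otherwise --- into any model of $T$. Amalgamation for $\K^T$ cannot repair this, since $P\notin\K^T$. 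As written, your argument therefore proves the Fact only for $T=\textbf{Th}_R$, while the paper invokes it for arbitrary $T$ satisfying Hypothesis \ref{hyp} throughout Section 4.

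The cited proof (and the paper's pointer to pp-quantifier elimination just before the Fact) never leaves models of $T$: after amalgamating $N_1,N_2$ over $M$ into a single $N\models T$, the Baur--Monk theorem shows that two tuples of the \emph{same} model with equal pp-types over $M$ have equal complete first-order types over $M$ (the invariant sentences are evaluated in one and the same $N$); one then passes to a sufficiently saturated, strongly homogeneous elementary extension $N^*\succeq N$, which is again a model of $T$, and an automorphism of $N^*$ fixing $M$ pointwise and sending $\bar{b}_1$ to $\bar{b}_2$ yields the two pure embeddings (elementary maps are pure) witnessing $\gtp(\bar{b}_1/M;N)=\gtp(\bar{b}_2/M;N)$. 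Your ``moreover'' paragraph already contains exactly this Baur--Monk ingredient, so the repair is to run it inside $N$ and finish with the elementary-extension argument; your pushout can be kept as a self-contained algebraic proof in the special case $T=\textbf{Th}_R$, which is indeed all that Theorem \ref{main} requires.
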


\subsection{The theory $\tilde{T}$} In \cite[3.16]{kuma} it is shown that $\K^T$ is $\lambda$-stable if  $\lambda^{|T|}=\lambda$. Then it follows 
from Fact \ref{existence} that there exist limit models of cardinality $\lambda$  in $\K^T$ for every cardinal $\lambda$ such that $\lambda^{|T|}=\lambda$. More importantly and key to the naturality of the theory we will introduce in this section is the following result.

\begin{fact}[{\cite[4.3]{kuma}}]\label{limeq}
If $M$ and $N$ are limit models in $\Ka$, then $M$ and $N$ are elementary equivalent.
\end{fact}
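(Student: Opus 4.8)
The plan is to prove that any two limit models in $\Ka$ are elementarily equivalent. First I would fix limit models $M$ and $N$, say $M$ is a $(\lambda, \alpha)$-limit model and $N$ is a $(\mu, \beta)$-limit model, over their respective base models. The strategy is to reduce the comparison of their first-order theories to the comparison of Galois-types, exploiting the correspondence between $pp$-types and Galois-types recorded in Fact \ref{pp=gtp}. Since the theory of modules has $pp$-quantifier elimination, the complete first-order theory of a model is determined by which $pp$-sentences (the $pp$-formulas in no free variables, i.e.\ data about the invariants of the module) it satisfies, so the real content is to show these invariants agree for $M$ and $N$.

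The key step I would carry out is to use the universality of limit models together with the fact that a limit model absorbs its generating chain. Concretely, by Fact \ref{univ} any limit model of cardinality $\lambda$ is universal in $\Ka_\lambda$, so $M$ and $N$ can be purely embedded into one another after passing to a common cardinality if needed. More to the point, I would argue that any limit model is elementarily equivalent to the base model it is built over, and indeed to an arbitrarily small elementary substructure: since each successor step $M_{i+1}$ is universal over $M_i$ and the whole class is closed under directed unions, no new $pp$-sentences can be forced or destroyed along the chain beyond those already present at the first infinite stage. By invoking $pp$-quantifier elimination (as in \cite[1.1]{ziegler}) one sees that the first-order theory of a module is controlled by its $pp$-invariants, and a universal step over $M_i$ realizes exactly the $pp$-types consistent with $\textbf{Th}_R$, which pins down the invariants of the limit uniformly. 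Thus the complete theory stabilizes and does not depend on the length $\alpha$ of the chain nor on the base model.

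The main obstacle I expect is handling the possibly different cardinalities $\lambda$ and $\mu$ and the different chain lengths $\alpha$, $\beta$, and showing the $pp$-invariants genuinely coincide rather than merely being comparable. I would address this by reducing to a common countable-sized $pp$-type analysis: elementary equivalence for modules is decided by countably indexed invariant conditions (the Baur--Monk invariants / dimensions of quotients of $pp$-definable subgroups), and each such invariant is determined by finite configurations of elements, which by universality are realized identically in any sufficiently long limit chain. Hence comparing $M$ and $N$ amounts to checking that each invariant is either finite with a fixed value or infinite in both, and universality over the successive stages guarantees that once an invariant can grow it is pushed to its maximal (possibly infinite) value in every limit model.

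Putting this together, I would conclude that $M$ and $N$ satisfy the same $pp$-sentences, and therefore by $pp$-quantifier elimination the same first-order sentences, so $M \equiv N$. The subtlety to be careful about is that this argument lives entirely at the level of the (incomplete) theory $T$ extending $\textbf{Th}_R$: the limit models all sit inside $Mod(T)$, so the invariants are constrained by $T$, and it is precisely the combination of universality (forcing realization of all $T$-consistent $pp$-types) and the Tarski--Vaught chain axioms (preserving everything realized along the way) that forces the invariants to agree across all limit models, independent of length or cardinality.
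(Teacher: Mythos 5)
Your overall route --- reduce elementary equivalence of modules to the Baur--Monk invariants $\mathrm{Inv}(\cdot,\phi,\psi)$ and use universality of limit models to argue that every invariant is pushed to the largest value compatible with $T$ --- is the right one; note that the paper itself gives no proof here (the statement is imported from \cite[4.3]{kuma}), and the argument there is essentially of this kind. However, two of your intermediate claims are wrong as stated. The claim that ``any limit model is elementarily equivalent to the base model it is built over'' is false: the base $M_0$ is an arbitrary member of $\K^T_\lambda$; for instance for $T=$ the theory of abelian groups, a limit model over a torsion-free base contains torsion (it purely contains a copy of $\mathbb{Z}/p\mathbb{Z}$ by universality), so already $\mathrm{Inv}(\cdot,px=0,x=0)$ separates the limit model from its base. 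What is true, and all you need, is that invariants are non-decreasing along a pure chain, so nothing realized is ever lost, and the theory stabilizes from the first universal extension onward, not at the base. Similarly, ``$M$ and $N$ can be purely embedded into one another after passing to a common cardinality'' is not meaningful: when $\lambda<\mu$ you get only one pure embedding, so mutual embeddability cannot carry the proof across cardinalities.

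The decisive step --- that the ``maximal value'' of each invariant is a single class-wide quantity attained by every limit model, independently of its cardinality and chain length --- is asserted rather than proved, and this is exactly where joint embedding and L\"{o}wenheim--Skolem must enter. Concretely: fix a pp-pair $\phi/\psi$ and $n<\omega$ such that some $L\models T$ has $\mathrm{Inv}(L,\phi,\psi)\ge n$. By downward L\"{o}wenheim--Skolem some $L'\leq_{pp} L$ of size at most $\lambda$ contains the finitely many witnesses; by joint embedding, L\"{o}wenheim--Skolem and coherence there is $L''\in\K^T_\lambda$ containing a pure copy of $L'$; by Fact \ref{univ} $L''$ embeds purely into the limit model $M$, and pure embeddings preserve ``at least $n$ classes of $\phi$ modulo $\psi$''. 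Hence $\mathrm{Inv}(M,\phi,\psi)=\sup\{\mathrm{Inv}(L,\phi,\psi): L\models T\}$ for every limit model $M$ of every cardinality (no ``sufficiently long'' chain is needed; $(\lambda,\omega)$-limits are covered), and the Baur--Monk theorem then yields $M\equiv N$. With this step supplied and the false claims about the base model and mutual embeddability removed, your argument goes through.
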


Let us introduce the main notion of this subsection.

\begin{nota}\label{notat}
For $T$ a theory of modules, let $\tilde{M}_T$ be the $(2^{|T|}, \omega)$-limit model of $\K^T$ and $\tilde{T}=Th(\tilde{M}_T)$.
\end{nota}

It is natural to ask which structures of $\Ka$ satisfy the complete first-order theory $\tilde{T}$. It follows from Fact \ref{limeq} that limit models do, we record this for future reference.

\begin{cor}\label{limeqq}
If $M$ is a limit model in $\Ka$, then $M$ is a model of $\tilde{T}$.
\end{cor}

The next lemma gives another class of structures satisfying $\tilde{T}$.

\begin{lemma}\label{gs=limit} Let $\lambda \geq |T|^+$.
If $M$ is a $\lambda$-saturated model in $\Ka$, then $M$ is a model of $\tilde{T}$.
\end{lemma}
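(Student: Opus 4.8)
The plan is to show that $M$ is elementarily equivalent to a conveniently chosen saturated limit model and then invoke the Baur--Monk theorem: two $R$-modules are elementarily equivalent if and only if they have the same invariants, i.e. for every pair of $pp$-formulas $\psi(x) \leq \phi(x)$ the (capped) index $[\phi(N):\psi(N)]$ of the subgroup $\psi(N)$ in $\phi(N)$ agrees (see \cite{prest}). Concretely, I would first fix a limit model $S$ that is $|T|^+$-saturated: since $(2^{|T|})^{|T|} = 2^{|T|}$, the class $\Ka$ is $2^{|T|}$-stable by \cite[3.16]{kuma}, so a $(2^{|T|}, |T|^+)$-limit model $S$ exists by Fact \ref{existence}, and by Fact \ref{limits}.(1) it is $|T|^+$-saturated (note $|T|^+$ is regular and lies in $[\LS(\Ka)^+, 2^{|T|}] = [|T|^+, 2^{|T|}]$). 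Being a limit model, $S \models \tilde{T}$ by Corollary \ref{limeqq}, and since $\tilde{T}$ is complete this gives $\mathrm{Th}(S) = \tilde{T}$. Hence it suffices to prove $M \equiv S$.

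The heart of the argument is the following claim: for every $pp$-pair $\psi \leq \phi$ and every $|T|^+$-saturated $N \in \Ka$, the index $[\phi(N):\psi(N)]$ equals $m^{*}_{\phi,\psi}$, the supremum in $\{1,2,\dots\} \cup \{\infty\}$ of $[\phi(N'):\psi(N')]$ over $N' \in \Ka$. Granting this, both $M$ (which is $\lambda$-saturated with $\lambda \geq |T|^+$) and $S$ realize the value $m^{*}_{\phi,\psi}$ on every $pp$-pair, so they have identical Baur--Monk invariants; therefore $M \equiv S$, and we conclude $M \models \mathrm{Th}(S) = \tilde{T}$.

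The main work, and the step I expect to be the obstacle, is proving the claim, which I would do by transporting witnesses through a pure embedding supplied by model-homogeneity. Recall that for $\lambda > \LS(\Ka) = |T|$ a $\lambda$-saturated model is $\lambda$-model-homogeneous. Fix $N_0 \leq_{pp} N$ with $\|N_0\| = |T|$. Given a finite $k \leq m^{*}_{\phi,\psi}$, choose $N'' \in \Ka$ with $a_1,\dots,a_k \in \phi(N'')$ lying in pairwise distinct cosets of $\psi(N'')$; using joint embedding, amalgamate $N_0$ and $N''$ into a common $P \in \Ka$ and let $N' \leq_{pp} P$ be a submodel of size $|T|$ with $N_0 \leq_{pp} N'$ and $a_1,\dots,a_k \in N'$. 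By model-homogeneity there is a pure embedding $f : N' \xrightarrow[N_0]{} N$. Since $f$ is pure it both preserves and reflects the $pp$-formulas $\phi(x)$ and $\psi(x_i - x_j)$, so $f(a_1),\dots,f(a_k) \in \phi(N)$ remain in pairwise distinct cosets of $\psi(N)$, giving $[\phi(N):\psi(N)] \geq k$. Letting $k$ range establishes the claim when $m^{*}_{\phi,\psi} = \infty$, and when $m^{*}_{\phi,\psi}$ is finite the reverse inequality is immediate because $N \in \Ka$. The delicate points are precisely that the condition ``$a_i - a_j$ lies outside $\psi$'' is itself a $pp$-condition preserved in both directions by the pure embedding, and that distinctness of cosets survives the passage from $N''$ to its pure image in $N'$; both are handled by purity in the argument above.
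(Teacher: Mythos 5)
Your proof is correct and is essentially the paper's intended argument: the paper's own proof is just a pointer to \cite[4.3]{kuma} together with the equivalence of $\lambda$-saturation and $\lambda$-model-homogeneity, and your computation showing that every Baur--Monk invariant of a $|T|^+$-saturated model is maximal (by transporting witnesses through a pure embedding supplied by model-homogeneity, then comparing with a saturated limit model) is precisely the filled-in version of that sketch. One minor phrasing slip: ``$a_i - a_j$ lies outside $\psi$'' is the \emph{negation} of a pp-condition rather than a pp-condition, but since pure embeddings both preserve and reflect pp-formulas, distinctness of cosets is preserved in both directions exactly as you in substance argue.
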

\begin{proof}[Proof sketch] 
The proof is similar to that of \cite[4.3]{kuma}, by using the equivalence between $\lambda$-saturation and $\lambda$-model-homogeneity.
\end{proof}

Moreover, $\tilde{T}$ is closed upward under pure extensions.

\begin{lemma}\label{up-close}
If $M$ is a model of $\tilde{T}$, \textbf{$N \in \Ka$} and $M \leq_{pp} N$, then $M \preceq N$. In particular, $N$ is a model of $\tilde{T}$.
\end{lemma}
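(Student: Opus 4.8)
The plan is to reduce the statement to a computation with Baur--Monk invariants and then to trap $N$ between two models of $\tilde{T}$ that are forced to share all their invariants. Recall that by $pp$-elimination of quantifiers for theories of modules (see \cite[1.1]{ziegler} and \cite{prest}) every first-order formula $\chi(\bar{x})$ is, modulo $\textbf{Th}_R$, equivalent to a Boolean combination of $pp$-formulas $\theta(\bar{x})$ and invariant sentences, i.e.\ sentences asserting that the index $[\phi(A):\psi(A)]$ of one $pp$-definable subgroup in another is at least $n$. Since $M \leq_{pp} N$, purity gives $M \models \theta(\bar{a}) \Leftrightarrow N \models \theta(\bar{a})$ for every $pp$-formula $\theta$ and every $\bar{a} \in M$. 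Hence, as soon as we know $M \equiv N$ (so that $M$ and $N$ agree on all invariant sentences), the Boolean combination above is evaluated identically in $M$ and in $N$ at parameters from $M$, and $M \preceq N$ follows at once; the final clause $N \models \tilde{T}$ is then immediate from $M \equiv N$ together with $M \models \tilde{T}$.

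Thus the entire proof reduces to establishing $M \equiv N$. First I would record the monotonicity of invariants under pure extensions: if $A \leq_{pp} B$ and $\psi \to \phi$ is a $pp$-pair, then $\phi(A) = \phi(B)\cap A$ and $\psi(A)=\psi(B)\cap A$, so the natural map $\phi(A)/\psi(A) \to \phi(B)/\psi(B)$ is injective and therefore $[\phi(A):\psi(A)] \leq [\phi(B):\psi(B)]$ (as Baur--Monk invariants: a smaller finite value, or a passage from a finite value to $\infty$). Next I would produce a limit model $M^{*}$ of $\K^T$ with $N \leq_{pp} M^{*}$. Choosing $\lambda \geq \|N\|$ with $\lambda^{|T|}=\lambda$, the class $\K^T$ is $\lambda$-stable by \cite[3.16]{kuma}, so by Fact \ref{existence} it has a $(\lambda,\omega)$-limit model $M^{*}$, which is universal in $\K^T_\lambda$ by Fact \ref{univ}; extending $N$ to some $N^{*}\in\K^T_\lambda$ with $N \leq_{pp} N^{*}$ and embedding $N^{*}$ purely into $M^{*}$ yields $N \leq_{pp} M^{*}$. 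By Corollary \ref{limeqq} we have $M^{*}\models\tilde{T}$, and $M\models\tilde{T}$ by hypothesis, so $M \equiv M^{*}$.

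Now the invariants are squeezed: for every $pp$-pair $\phi/\psi$, the chain $M \leq_{pp} N \leq_{pp} M^{*}$ together with monotonicity gives
\[
[\phi(M):\psi(M)] \ \leq\ [\phi(N):\psi(N)] \ \leq\ [\phi(M^{*}):\psi(M^{*})],
\]
while $M \equiv M^{*}$ forces the two outer indices to be equal as Baur--Monk invariants. Hence the middle index equals both, so $\Inv(N,\phi,\psi) = \Inv(M,\phi,\psi)$ for every $pp$-pair, and Baur--Monk yields $M \equiv N$. Combined with the first paragraph this gives $M \preceq N$, and $N \models \tilde{T}$.

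The step I expect to require the most care is the construction of the limit model $M^{*}$ lying purely above $N$: one must check that $\K^T$ has no maximal models (so that $N$ really extends to a pure submodel of the chosen cardinality $\lambda$) and that the universality of limit models in Fact \ref{univ} applies to that extension. Everything else is bookkeeping with Baur--Monk invariants, whose monotonicity under pure extensions is the single algebraic input making the sandwich argument work.
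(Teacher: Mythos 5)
Your proof is correct and takes essentially the same route as the paper: both arguments purely embed $N$ into a $(\lambda,\omega)$-limit model (with $\lambda=\|N\|^{|T|}$, using \cite[3.16]{kuma}, Fact \ref{existence} and Fact \ref{univ}), observe that this limit model is a model of $\tilde{T}$ (Corollary \ref{limeqq}), and then exploit the pure sandwich $M \leq_{pp} N \leq_{pp} M^{*}$ with $M \equiv M^{*}$. The only difference is that where the paper cites \cite[2.25]{prest} for the last step, you re-derive it by hand via monotonicity of the Baur--Monk invariants under pure embeddings (\cite[2.23]{prest}) and $pp$-elimination (\cite[2.18]{prest}); your concern about no maximal models is harmless, since $T$ is a first-order theory with an infinite model, so upward L\"{o}wenheim--Skolem together with Hypothesis \ref{hyp} gives arbitrarily large pure extensions, exactly as the paper itself implicitly uses.
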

\begin{proof}
Let $\lambda= \| N\|^{|T|}$, then by \cite[3.16]{kuma} $\Ka$ is $\lambda$-stable. So let $N^* \in \Ka_\lambda$ such that $N \leq_{pp} N^*$ and $N^*$ is a $(\lambda, \omega)$-limit model. By Fact \ref{limeq} $N^* \equiv M$. Then $M \preceq N \preceq N^*$ by \cite[2.25]{prest}. Whence $M \preceq N$ and $N$ is a model of $\tilde{T}$.
\end{proof}

The next result shows the naturality of $\tilde{T}$, the proof is similar to that of the above lemma so we omit it.
\begin{cor}
If $M \in \Ka$, then there is $N$ a model of $\tilde{T}$ such that $M \leq_{pp} N$. Moreover, if $T'$ is a complete first-order theory with this property, then $\tilde{T}=T'$.
\end{cor}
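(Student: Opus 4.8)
The plan is to treat the two assertions separately, reusing the limit-model machinery of this subsection together with Lemma \ref{up-close}.

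For the existence clause, given $M \in \Ka$ I would realize the required pure extension as a limit model. Put $\lambda = \|M\|^{|T|}$, so that $\lambda^{|T|} = \lambda$ and hence $\Ka$ is $\lambda$-stable by \cite[3.16]{kuma}; by Fact \ref{existence} a $(\lambda,\omega)$-limit model $N$ exists, and by Fact \ref{univ} it is universal in $\Ka_\lambda$. Since $\|M\| \le \lambda$, there is a pure embedding of $M$ into $N$, so after identifying $M$ with its image I may assume $M \leq_{pp} N$. As $N$ is a limit model, Corollary \ref{limeqq} gives $N \models \tilde{T}$, which is exactly what is wanted. Note that this argument also shows that $\tilde{T}$ itself is a complete first-order theory satisfying the stated property, so the uniqueness clause is not vacuous.

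For the uniqueness clause, let $T'$ be a complete first-order theory with the property and apply the property to the limit model $\tilde{M}_T \in \Ka$: this yields $N \models T'$ with $\tilde{M}_T \leq_{pp} N$, where $N$ lies in $\Ka$. Since $\tilde{M}_T \models \tilde{T}$ by definition, Lemma \ref{up-close} applies and gives $\tilde{M}_T \preceq N$; in particular $Th(N) = Th(\tilde{M}_T) = \tilde{T}$. On the other hand $N \models T'$ and $T'$ is complete, so $Th(N) = T'$. Comparing the two descriptions of $Th(N)$ yields $\tilde{T} = T'$, as required.

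The one delicate point, and the place where I would be most careful, is guaranteeing that the pure extension $N$ produced by the property lies in $\Ka$, since this is precisely the hypothesis under which Lemma \ref{up-close} is stated. In the principal application $T = \textbf{Th}_R$ this is automatic, because $\Ka = (R\text{-Mod}, \leq_{pp})$ contains every $R$-module; in general it is the intended reading of the property, consistent with the fact that the witness $N$ produced in the existence clause is itself a limit model and hence a member of $\Ka$. Without this closure one could not conclude $\tilde{M}_T \preceq N$: a pure embedding only forces the Baur--Monk invariants of $\tilde{M}_T$ to be bounded by those of $N$, not to coincide, so a priori $N$ could model a complete theory with strictly larger invariants. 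Everything else reduces to the already-established elementarity of pure extensions of models of $\tilde{T}$ (Lemma \ref{up-close}) together with the completeness of $T'$.
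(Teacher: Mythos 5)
Your proof is correct and is essentially the argument the paper intends (it omits the proof as ``similar to that of Lemma \ref{up-close}''): take $\lambda=\|M\|^{|T|}$, use $\lambda$-stability to purely extend $M$ to a $(\lambda,\omega)$-limit model, quote Corollary \ref{limeqq} for the existence clause, and for uniqueness apply the property of $T'$ to $\tilde{M}_T$ and combine Lemma \ref{up-close} with the completeness of $T'$. Your flagged ``delicate point'' is also the right reading: the witness $N$ must lie in $\Ka$ (automatic for $\tilde{T}$ since $\tilde{T}\supseteq T$, and for $T=\textbf{Th}_R$ since every module is in $\Ka$), and without that closure the ``moreover'' clause would indeed fail, for exactly the invariants reason you give.
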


The following lemmas show that there is a close relationship between the class $\Ka$ and the first-order theory $\tilde{T}$. This is useful since complete first-order theories of modules are very well understood (see for instance \cite{prest}). 

\begin{lemma}\label{stable}
For $T$ a theory of modules and $\lambda \geq |T|$, the following are equivalent.
\begin{enumerate}
\item $\tilde{T}$ is $\lambda$-stable. 
\item $\K^T$ is $\lambda$-stable.
\end{enumerate}
\end{lemma}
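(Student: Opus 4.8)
The plan is to prove the equivalence of $\lambda$-stability of the complete first-order theory $\tilde{T}$ and $\lambda$-stability of the AEC $\K^T$, for $\lambda \geq |T|$, by translating between first-order types and Galois-types. The bridge between the two notions is Fact \ref{pp=gtp}, which identifies Galois-types over a model in $\K^T$ with $pp$-types, together with the structural results established above about $\tilde{T}$ (Corollary \ref{limeqq}, Lemma \ref{gs=limit}, and Lemma \ref{up-close}), which let me pass freely between models of $\K^T$ and models of $\tilde{T}$.

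First I would prove (1)$\Rightarrow$(2). Assume $\tilde{T}$ is $\lambda$-stable as a first-order theory. Let $M \in \K^T_\lambda$; I must bound $|\gS_{\K^T}(M)|$ by $\lambda$. By Lemma \ref{up-close} I can find (or embed $M$ purely into) a model $N$ of $\tilde{T}$, and since pure extensions of models of $\tilde{T}$ are elementary by that same lemma, I can arrange that $M$ itself sits purely and elementarily inside a model of $\tilde{T}$; more carefully, I would take a model $\hat{M} \models \tilde{T}$ of size $\lambda$ with $M \leq_{pp} \hat{M}$. By Fact \ref{pp=gtp}, a Galois-type $\gtp(b/M; N)$ is determined by $pp(b/M, N)$, and since every such $pp$-type over $M$ can be realized in some pure extension which embeds into a model of $\tilde{T}$, each Galois-type over $M$ corresponds to a first-order type over $\hat{M}$ in the sense of $\tilde{T}$. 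Thus $|\gS_{\K^T}(M)| \leq |S^{\tilde{T}}(\hat{M})| \leq \lambda$ by $\lambda$-stability of $\tilde{T}$, giving $\lambda$-stability of $\K^T$.

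For (2)$\Rightarrow$(1), assume $\K^T$ is $\lambda$-stable. To show $\tilde{T}$ is $\lambda$-stable, take a model $\hat{M} \models \tilde{T}$ with $|\hat{M}| = \lambda$ and bound $|S^{\tilde{T}}(\hat{M})|$. Here $\hat{M} \in \K^T$ (a model of $\tilde{T}$ is in particular a model of $T$). A complete first-order type $p \in S^{\tilde{T}}(\hat{M})$ is realized by some $b$ in an elementary extension $\hat{N} \succeq \hat{M}$ with $\hat{N} \models \tilde{T}$; since elementary extensions are pure, $\hat{M} \leq_{pp} \hat{N}$, so $p$ restricts to a $pp$-type and hence, by Fact \ref{pp=gtp}, determines a Galois-type $\gtp(b/\hat{M}; \hat{N})$. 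The map from first-order types to Galois-types is at most finite-to-one in the relevant sense: by the ``moreover'' clause of Fact \ref{pp=gtp}, since all the ambient models are models of the complete theory $\tilde{T}$ and hence elementarily equivalent, the $pp$-type over $\hat{M}$ actually recovers the full first-order type. Therefore the map $p \mapsto \gtp(b/\hat{M}; \hat{N})$ is injective, so $|S^{\tilde{T}}(\hat{M})| \leq |\gS_{\K^T}(\hat{M})| \leq \lambda$.

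The main obstacle, and the point requiring the most care, is the direction (2)$\Rightarrow$(1): controlling the number of \emph{first-order} types in terms of \emph{Galois}-types. A single Galois-type (equivalently, a $pp$-type over $M$) could a priori split into many complete first-order types over an arbitrary model. The key that resolves this is that I am computing types over a model of the \emph{complete} theory $\tilde{T}$, so all realizations live in models elementarily equivalent to one another, and the ``moreover'' part of Fact \ref{pp=gtp} upgrades the $pp$-type to the full first-order type. Making this upgrade precise — ensuring that I only ever compare realizations inside models of $\tilde{T}$, so that the $pp$-type determines the first-order type — is where the argument must be handled carefully, and it is exactly here that the structural lemmas about $\tilde{T}$ (especially Corollary \ref{limeqq} and Lemma \ref{up-close}) do their work.
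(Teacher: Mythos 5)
Your proposal is correct and takes essentially the same route as the paper: both directions translate between Galois-types and first-order types via Fact \ref{pp=gtp}, using its ``moreover'' clause inside models of $\tilde{T}$ (supplied by Corollary \ref{limeqq} and Lemma \ref{up-close}) to get injectivity of the type maps. The only cosmetic differences are that the paper realizes all Galois-types in a single limit model and, in the converse direction, bounds types over an arbitrary parameter set $A$ directly (placing $A$ inside a model of $\K^T$ of size $\lambda$), whereas you bound types over a model $\hat{M} \models \tilde{T}$ and implicitly rely on the standard first-order fact that $\lambda$-stability over models of size $\lambda$ yields $\lambda$-stability over arbitrary sets of size $\lambda$.
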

\begin{proof}
$\Rightarrow :$ Let $M\in\Ka_\lambda$ and $\{ p_i : i < \alpha \}$ be an enumeration without repetitions of $\gS(M)$. Fix $\mu=\lambda^{|T|}$ and $N \in \Ka_\mu$ a $(\mu, \omega)$-limit model such that $M \leq_{pp} N$. Then there is $\{a_i : i < \alpha \} \subseteq N$ such that $p_i= \gtp(a_i/ M; N)$ for every $i < \alpha$.

Let $\Phi: \gS(M) \to S^{Th(N)}(M)$ be defined by $\phi(\gtp(a_i/ M; N))= tp(a_i/M, N)$. By Fact \ref{pp=gtp} it follows that $\Phi$ is a well-defined injective function, so $|\gS(M)| \leq  |S^{Th(N)}(M)|$. Finally, since $N$ is a model of $\tilde{T}$ by Corollary \ref{limeqq} and $\tilde{T}$ is $\lambda$-stable by hypothesis, it follows that $ |S^{Th(N)}(M)| \leq \| M \|$. Hence $|\gS(M)|\leq \| M \|$.

$\Leftarrow :$  Let $A \subseteq N$ of size $\lambda$ with $N \vDash \tilde{T}$ and $N$ is $\lambda^{+}$-saturated in $\tilde{T}$. Let $\{ p_i : i < \alpha \}$ be an enumeration without repetitions of $S^{\tilde{T}}(A)$. Let $\{a_i : i < \alpha \} \subseteq N$ such that $p_i= tp(a_i/ A, N)$ for every $i < \alpha$.

Let $M$ be the structure obtained by applying downward L\"{o}wenheim-Skolem-Tarski to $A$ in $N$, observe that $\|M\| =\lambda$ because $\lambda \geq |T|$. Let $\Psi: S^{\tilde{T}}(A) \to \gS(M)$ be defined by $\phi(tp(a_i/ A, N))= \gtp(a_i/M; N)$. By Fact \ref{pp=gtp} it follows that $\Phi$ is a well-defined injective function and doing a similar argument to the one above it follows that $|S^{\tilde{T}}(A)| \leq  \| M \|=|A|$. \end{proof}

\begin{lemma}\label{st=gst}
For $T$ a theory of modules and $\lambda \geq |T|^+$, the following are equivalent.
\begin{enumerate}
\item $M$ is a model of $\tilde{T}$ and $M$ is $\lambda$-saturated in $\tilde{T}$.
\item M is $\lambda$-saturated in $\Ka$.
\end{enumerate}
\end{lemma}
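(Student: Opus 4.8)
The plan is to prove the equivalence between $\lambda$-saturation in the first-order sense (inside the complete theory $\tilde{T}$) and $\lambda$-saturation in the AEC $\Ka$, using Fact \ref{pp=gtp} as the bridge between Galois-types and complete first-order types, and Lemma \ref{up-close} to ensure that the relevant models are models of $\tilde{T}$.

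For the direction (1) $\Rightarrow$ (2): First I would take $M$ a model of $\tilde{T}$ that is $\lambda$-saturated in $\tilde{T}$, and show it is $\lambda$-saturated in $\Ka$. So let $N \leq_{pp} M$ with $\|N\| < \lambda$ and let $p = \gtp(b/N; N') \in \gS(N)$ for some $N \leq_{pp} N'$. By Lemma \ref{up-close}, since $M$ is a model of $\tilde{T}$ and $N \leq_{pp} M$, we have $N \preceq M$; in particular $N$ is a submodel of $M$ over which $M$ is elementary. Now amalgamate $N'$ and $M$ over $N$ inside $\Ka$ to get $N^* \in \Ka$ with pure embeddings $f: N' \to N^*$ and $g: M \to N^*$ fixing $N$. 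The first-order type $q = tp(f(b)/g[N], N^*)$ is a type over (an isomorphic copy of) $N$ of size $< \lambda$ in a model elementarily equivalent to $M$, so by $\lambda$-saturation of $M$ in $\tilde{T}$ there is $a \in M$ realizing the pulled-back type. Using Fact \ref{pp=gtp} (the $pp$-type version, which requires only $M \leq_{pp}$ the ambient model, and the equality of $pp$-types over $N$), one concludes $\gtp(a/N; M) = p$, giving $\lambda$-saturation of $M$ in $\Ka$. The care needed here is to verify that equality of first-order types over $N$ (which is what saturation in $\tilde{T}$ produces) yields equality of the relevant $pp$-types, and hence of Galois-types via Fact \ref{pp=gtp}.

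For the direction (2) $\Rightarrow$ (1): I would take $M$ that is $\lambda$-saturated in $\Ka$. First note that since $\lambda \geq |T|^+ > |T|$, $M$ is in particular $(|T|^+)$-saturated, so by Lemma \ref{gs=limit} $M$ is a model of $\tilde{T}$. To show it is $\lambda$-saturated in $\tilde{T}$, take $A \subseteq M$ with $|A| < \lambda$ and a complete first-order type $q \in S^{\tilde{T}}(A)$, realized by some $c$ in an elementary extension $N \succeq M$. Apply downward L\"owenheim–Skolem–Tarski in $\Ka$ to find $N_0 \leq_{pp} M$ with $A \subseteq N_0$ and $\|N_0\| < \lambda$ (using $\lambda \geq |T|^+$). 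Since $M \leq_{pp} N$ gives $M \preceq N$ (again by Lemma \ref{up-close}, as both model $\tilde{T}$), the Galois-type $\gtp(c/N_0; N)$ lies in $\gS(N_0)$, so by $\lambda$-saturation of $M$ in $\Ka$ there is $a \in M$ with $\gtp(a/N_0; M) = \gtp(c/N_0; N)$. By Fact \ref{pp=gtp} these have the same $pp$-type over $N_0$, and since both ambient models satisfy $\tilde{T}$ (so $N \equiv M$), the ``moreover'' clause of Fact \ref{pp=gtp} upgrades this to equality of first-order types, whence $a$ realizes $q\rest_A$; restricting back to $A$ completes the argument.

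The main obstacle I anticipate is the bookkeeping around which version of Fact \ref{pp=gtp} applies at each step: the basic statement equates Galois-types with $pp$-types, while the stronger ``moreover'' clause equates them with full first-order types only under the hypothesis $N_1 \equiv N_2$. The crux is therefore to consistently arrange that all ambient models are models of the \emph{complete} theory $\tilde{T}$ — which is exactly what Lemma \ref{up-close} and Lemma \ref{gs=limit} guarantee — so that $pp$-type equality and first-order type equality coincide and the translation between the two notions of saturation goes through cleanly. The threshold $\lambda \geq |T|^+$ is used both to invoke Lemma \ref{gs=limit} and to guarantee that the L\"owenheim–Skolem–Tarski hull of a set of size $< \lambda$ still has size $< \lambda$.
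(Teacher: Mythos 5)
Your overall route is the same as the paper's: use Fact \ref{pp=gtp} as the bridge between Galois-types and first-order types, Lemma \ref{gs=limit} to see that a Galois-saturated model satisfies $\tilde{T}$, and L\"owenheim--Skolem hulls of size $<\lambda$ (here $\lambda \geq |T|^+$ is used exactly as you say). Your backward direction is essentially the paper's argument verbatim. In the forward direction the paper proceeds slightly differently: it realizes the given Galois-type $p \in \gS(N)$ inside a $(\|N\|^{|T|},\omega)$-limit model $N \leq_{pp} L^*$, which is a model of $\tilde{T}$ by Fact \ref{limeq}, and then transfers the first-order type $tp(a/N, L^*)$ into $M$ by saturation; you instead amalgamate the witness $N'$ with $M$ over $N$ and use Lemma \ref{up-close} to see the amalgam models $\tilde{T}$. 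Both mechanisms work.

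One step as you wrote it, however, is wrong: Lemma \ref{up-close} does \emph{not} yield ``$N \leq_{pp} M$ implies $N \preceq M$.'' That lemma only goes upward (a pure \emph{extension} of a model of $\tilde{T}$ is an elementary extension); the downward statement is false in general, since a pure submodule of size $<\lambda$ of a model of $\tilde{T}$ need not even satisfy $\tilde{T}$. Fortunately your argument does not actually need $N \preceq M$. What it needs is that the amalgam $N^*$ models $\tilde{T}$ and that $g\colon M \to N^*$ is elementary and fixes $N$ --- both of which follow from Lemma \ref{up-close} correctly applied to $g[M] \leq_{pp} N^*$ --- so that $q = tp(f(b)/N, N^*)$, pulled back along $g$, is a complete type over $N$ consistent with the theory of $(M,n)_{n \in N}$ and first-order $\lambda$-saturation of $M$ applies; then, since $M \equiv N^*$, the moreover clause of Fact \ref{pp=gtp} converts equality of first-order types back into equality of Galois-types, as you indicate. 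With that correction your proof is complete and matches the paper's in all essentials.
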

\begin{proof}
$\Rightarrow$: Let $L \in \Ka$, $L \leq_{pp} M$, $\| L \| < \lambda$ and $p \in \gS(L)$. Let $L^*$ be a $(\|L\|^{|T|}, \omega)$-limit model such that $L \leq_{pp} L^*$ and $a \in L^*$ with $p=\gtp(a/L; L^*)$. 

Realize that $L^*$ is a model of $\tilde{T}$ by Fact \ref{limeq}, so $tp(a/L, L^*) \in S^{\tilde{T}}(L)$. Then since $M$ is $\lambda$-saturated in $\tilde{T}$ there is $b \in M$ such that $ tp(a/L, L^*)=tp(b/L, M)$.  Therefore, since $L^* \equiv M$ and Fact \ref{pp=gtp}, we conclude that $\gtp(a/L; L^*)=  \gtp(b/ L; M)$.

$\Leftarrow$: By Lemma \ref{gs=limit} $M$ is a model of $\tilde{T}$. Let $A \subseteq M$ and $p \in S^{\tilde{T}}(A)$ with $|A| < \lambda$. Let $N$ be an elementary extension of $M$ and $a \in N$ such that $p =tp(a/A, N)$. Let $M^*$ be the structure obtained by applying downward L\"{o}wenheim-Skolem-Tarski to $A$ in $M$, observe that $\|M^*\| < \lambda$ because $\lambda \geq |T|^+$.

Realize that $M^* \leq_{pp} M \leq_{pp}  N$, so $\gtp( a/ M^*; N) \in \gS(M^*)$. Then since $M$ is $\lambda$-saturated in $\Ka$, there is $b \in M$ such that  $\gtp( a/ M^*; N) = \gtp( b/ M^*; M)$. Therefore, since $M \equiv N$ and Fact \ref{pp=gtp}, we conclude that $tp( a/ M^*, N) = tp( b/ M^*, M)$. Hence $b \in M$ realizes $p$. \end{proof}

The following result was pointed out to us by an anonymous referee.

\begin{lemma}\label{lim-til} For $T$ a theory of modules, $\lambda \geq |T|$ and $\alpha < \lambda^+$ a limit ordinal, the following are equivalent. 
\begin{enumerate}
\item $M$ is a $(\lambda, \alpha)$-limit model in $(Mod(\tilde{T}), \preceq)$.
\item $M$ is a $(\lambda, \alpha)$-limit model in $\Ka$.
\end{enumerate}
\end{lemma}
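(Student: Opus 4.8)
The plan is to exploit the fact, essentially contained in Lemma \ref{up-close}, that on the class $Mod(\tilde{T})$ the two strong substructure relations agree. Concretely, if $M, N \models \tilde{T}$ and $M \leq_{pp} N$ then $M \preceq N$ by Lemma \ref{up-close}, while conversely $M \preceq N$ always implies $M \leq_{pp} N$ since $pp$-formulas are first-order. Hence for models of $\tilde{T}$ the relations $\preceq$ and $\leq_{pp}$ coincide, and moreover a $\Ka$-embedding with domain a model of $\tilde{T}$ is automatically elementary: if $f \colon N^* \to P$ is a $\leq_{pp}$-embedding with $N^* \models \tilde{T}$ and $P \in \Ka$, then $f[N^*] \cong N^* \models \tilde{T}$ and $f[N^*] \leq_{pp} P$, so Lemma \ref{up-close} gives $f[N^*] \preceq P$, i.e. $f$ is elementary. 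This reduces the lemma to two points: (a) the members of a witnessing chain lie in $Mod(\tilde{T})$, and (b) the notion of ``universal over'' agrees in the two classes for models of $\tilde{T}$.

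For the stability I need throughout, observe that under either hypothesis $M$ is a limit model of cardinality $\lambda$ in an AEC satisfying the hypotheses of Fact \ref{existence} (namely $\Ka$, or the elementary class $(Mod(\tilde{T}), \preceq)$), so by the converse part of Fact \ref{existence} that AEC is $\lambda$-stable; together with Lemma \ref{stable} this yields that both $\Ka$ and $\tilde{T}$ are $\lambda$-stable. Now fix $N_0 \leq_{pp} N_1$ in $(Mod(\tilde{T}))_\lambda$; I claim $N_1$ is universal over $N_0$ in $\Ka$ iff it is universal over $N_0$ in $(Mod(\tilde{T}), \preceq)$. One direction is immediate, since $\Ka$-universality quantifies over strictly more extensions, and any witnessing $\leq_{pp}$-embedding of an $N^* \models \tilde{T}$ is already elementary by the previous paragraph. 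For the converse, given an arbitrary $N^* \models T$ of size $\lambda$ with $N_0 \leq_{pp} N^*$, I would use $\lambda$-stability to build a $(\lambda, \omega)$-limit model $N'$ over $N^*$ in $\Ka$; then $N' \models \tilde{T}$ by Corollary \ref{limeqq}, and $N_0 \leq_{pp} N'$ forces $N_0 \preceq N'$, so $(Mod(\tilde{T}), \preceq)$-universality of $N_1$ over $N_0$ provides an elementary $g \colon N' \xrightarrow[N_0]{} N_1$; restricting $g$ to $N^*$ and using transitivity of $\leq_{pp}$ gives the required $\leq_{pp}$-embedding over $N_0$.

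With (a) and (b) in hand the two directions follow. For (1) $\Rightarrow$ (2): a chain $\{M_i : i < \alpha\}$ witnessing that $M$ is a $(\lambda, \alpha)$-limit model in $(Mod(\tilde{T}), \preceq)$ consists of models of $\tilde{T}$, on which $\preceq$ is $\leq_{pp}$, and each successor step is $\Ka$-universal by (b); hence the same chain witnesses that $M$ is a $(\lambda,\alpha)$-limit model in $\Ka$. For (2) $\Rightarrow$ (1): given a $\Ka$-witnessing chain $\{M_i : i < \alpha\}$, I first show $M_i \models \tilde{T}$ for every $i \geq 1$. For a successor $M_{i+1}$, build a $(\lambda, \omega)$-limit model $L$ over $M_i$ in $\Ka$ (using stability), which satisfies $\tilde{T}$ by Corollary \ref{limeqq}; universality of $M_{i+1}$ over $M_i$ embeds $L$ purely into $M_{i+1}$, and Lemma \ref{up-close} forces $M_{i+1} \models \tilde{T}$. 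Limit stages are covered by Lemma \ref{up-close} once more, since a successor below is purely embedded and already models $\tilde{T}$. Discarding $M_0$ and reindexing by $j \mapsto M_{1+j}$ (legitimate since $1 + \alpha = \alpha$ for the limit ordinal $\alpha$, and $M_0 \subseteq M_1$ leaves the union unchanged) produces a length-$\alpha$ chain inside $Mod(\tilde{T})$ whose successor steps are $(Mod(\tilde{T}), \preceq)$-universal by (b); thus $M$ is a $(\lambda, \alpha)$-limit model in $(Mod(\tilde{T}), \preceq)$.

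I expect the main obstacle to be the nontrivial inclusion in (b): passing from an arbitrary pure extension $N^* \models T$, which need not satisfy $\tilde{T}$, to one that does while keeping cardinality $\lambda$ and fixing the base $N_0$. This is precisely where $\lambda$-stability and the identification of limit models with models of $\tilde{T}$ via Corollary \ref{limeqq} are essential. By contrast, the remaining bookkeeping — continuity of the reindexed chain and the verification that the chain members satisfy $\tilde{T}$ — is routine given Lemma \ref{up-close}.
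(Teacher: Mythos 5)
Your proposal is correct and follows essentially the same route as the paper: Lemma \ref{up-close} is the engine for transferring universality between $(Mod(\tilde{T}), \preceq)$ and $\Ka$, Corollary \ref{limeqq} together with a $(\lambda,\omega)$-limit model over $M_0$ shows the chain members from $M_1$ on are models of $\tilde{T}$, and one discards $M_0$ and reindexes at the end. The only difference is that what you call the main obstacle in (b) is actually immediate: since $N_0 \models \tilde{T}$, Lemma \ref{up-close} applied to $N_0 \leq_{pp} N^*$ already yields $N^* \models \tilde{T}$ and $N_0 \preceq N^*$, so your detour through a $(\lambda,\omega)$-limit model over $N^*$ (and the stability argument it requires) is sound but unnecessary.
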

\begin{proof}
$\Rightarrow$: Let $\{M_i : i < \alpha\}$ be a witness to the fact that $M$ is a $(\lambda, \alpha)$-limit model in $(Mod(\tilde{T}), \preceq)$. Observe that $\{M_i : i < \alpha\}$ is chain of models in $\Ka$, so it is enough to show that $M_{i+1}$ is universal over $M_i$ for every $i < \alpha$ in $\Ka$. This follows easily from Lemma \ref{up-close}.

$\Leftarrow$: Let $\{M_i : i < \alpha\}$ be a witness to the fact that $M$ is a $(\lambda, \alpha)$-limit model in $\K^T$. $\Ka$ is $\lambda$-stable, by Fact \ref{existence}, so let $N$ be a $(\lambda, \omega)$-limit model over $M_0$. Since $M_1$ is universal over $M_0$, there is $f: N \xrightarrow[M_0]{} M_1$. Then by Lemma \ref{up-close} it follows that $\{ M_i :  0< i < \alpha\}$ is an elementary chain of models of $\tilde{T}$. Using Lemma  \ref{up-close} once again, one can show that $M_{i+1}$ is universal over $M_i$ for every $i < \alpha$ in $(Mod(\tilde{T}), \preceq)$. Hence $\{ M_i :  0< i < \alpha\}$ is a witness to the fact that $M$ is a $(\lambda, \alpha)$-limit model in $(Mod(\tilde{T}), \preceq)$. \end{proof}

Given $\phi, \psi$ $pp$-formulas in one free
variable such that $\textbf{Th}_R \vdash \psi \to \phi$ and a module $M$, $Inv(M,\phi, \psi)$ is the size of $\phi(M)/ \psi(M)$ if $|\phi(M)/ \psi(M)|$ is finite and infinity otherwise.
\begin{lemma}
Let $T$ be a theory of modules. If $\Ka$ is closed under direct sums, then $\tilde{T}$ is closed under direct sums.
\end{lemma}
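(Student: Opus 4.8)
The plan is to translate the statement about elementary equivalence into a statement about the Baur–Monk invariants $\Inv(-,\phi,\psi)$ and to observe that the hypothesis forces every such invariant of $\tilde T$ to equal $1$ or to be infinite. Recall (Baur–Monk; see \cite{prest}) that two $R$-modules are elementarily equivalent precisely when they share all invariants $\Inv(-,\phi,\psi)$, where infinite values are identified with a single symbol $\infty$. Thus $\tilde T$ being closed under direct sums amounts to showing that for any family $\{M_i : i \in I\}$ of models of $\tilde T$ one has $\Inv(\bigoplus_i M_i,\phi,\psi) = \Inv(\tilde M_T,\phi,\psi)$ for every pair of $pp$-formulas $\psi \to \phi$. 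I would reduce this to two points: (a) each $\Inv(\tilde M_T,\phi,\psi)$ lies in $\{1,\infty\}$, and (b) this value is preserved under arbitrary direct sums.

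The crux is step (a), and here I would exploit the hypothesis together with Lemma \ref{up-close}. By Corollary \ref{limeqq}, $\tilde M_T$ is a model of $\tilde T$. Since $\Ka$ is closed under direct sums, $\tilde M_T^{(\aleph_0)} \in \Ka$, and $\tilde M_T \leq_{pp} \tilde M_T^{(\aleph_0)}$ because a direct summand is a pure submodule. Hence Lemma \ref{up-close} yields $\tilde M_T \preceq \tilde M_T^{(\aleph_0)}$, so the two modules share all invariants. Fixing $\psi \to \phi$ and writing $G = \phi(\tilde M_T)/\psi(\tilde M_T)$, the fact that $pp$-formulas commute with direct sums gives $\phi(\tilde M_T^{(\aleph_0)})/\psi(\tilde M_T^{(\aleph_0)}) \cong G^{(\aleph_0)}$. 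If $|G|$ were finite and strictly greater than $1$, then $G^{(\aleph_0)}$ would be infinite, so $\Inv(\tilde M_T^{(\aleph_0)},\phi,\psi) = \infty \neq \Inv(\tilde M_T,\phi,\psi)$, contradicting $\tilde M_T \equiv \tilde M_T^{(\aleph_0)}$. Therefore $\Inv(\tilde M_T,\phi,\psi) \in \{1,\infty\}$.

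Step (b) is then a routine computation with the additivity of $pp$-definable subgroups. Let $\{M_i : i \in I\}$ be a nonempty family of models of $\tilde T$ and fix $\psi \to \phi$. Since $pp$-formulas commute with direct sums, $\phi(\bigoplus_i M_i)/\psi(\bigoplus_i M_i) \cong \bigoplus_i \big(\phi(M_i)/\psi(M_i)\big)$. If $\Inv(\tilde M_T,\phi,\psi) = 1$ then, as each $M_i \equiv \tilde M_T$, we have $\phi(M_i) = \psi(M_i)$ for all $i$, so the right-hand side is trivial and the invariant is $1$. If $\Inv(\tilde M_T,\phi,\psi) = \infty$ then each summand $\phi(M_i)/\psi(M_i)$ is infinite, so the direct sum is infinite and the invariant is $\infty$. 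In either case the invariant of $\bigoplus_i M_i$ matches that of $\tilde M_T$; since $(\phi,\psi)$ was arbitrary, $\bigoplus_i M_i \equiv \tilde M_T$, that is, $\bigoplus_i M_i$ is a model of $\tilde T$.

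I expect step (a) to be the only real obstacle: its content is the observation that closure of $\Ka$ under direct sums is exactly what is needed to reinsert $\tilde M_T^{(\aleph_0)}$ into $\Ka$ and apply Lemma \ref{up-close}, thereby upgrading the pure embedding $\tilde M_T \leq_{pp} \tilde M_T^{(\aleph_0)}$ to an elementary one and annihilating every finite invariant exceeding $1$. Once the invariants of $\tilde T$ are pinned to $\{1,\infty\}$, step (b) requires nothing beyond the standard fact that $pp$-definable subgroups commute with direct sums, together with the Baur–Monk characterization of elementary equivalence.
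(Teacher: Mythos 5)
Your proof is correct, and its skeleton is the same as the paper's: first pin every invariant $\Inv(\tilde{M}_T,\phi,\psi)$ to $\{1,\infty\}$, then conclude by the multiplicativity of invariants over direct sums together with the Baur--Monk characterization of elementary equivalence. The difference lies in how the key claim is established. The paper argues \emph{downward}: since $\Ka$ is closed under direct sums and $\tilde{M}_T$ is universal in its cardinality (Fact \ref{univ}), $\tilde{M}_T \oplus \tilde{M}_T$ purely embeds into $\tilde{M}_T$, giving $k^2 = \Inv(\tilde{M}_T\oplus\tilde{M}_T,\phi,\psi) \leq \Inv(\tilde{M}_T,\phi,\psi) = k$, which kills any finite $k>1$. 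You argue \emph{upward}: $\tilde{M}_T \leq_{pp} \tilde{M}_T^{(\aleph_0)} \in \Ka$, so Lemma \ref{up-close} upgrades the pure embedding to an elementary one, and a finite invariant $k>1$ would become infinite in the countable power, contradicting elementary equivalence. Both exploit exactly the hypothesis of closure under direct sums plus a structural property of the limit model (universality in one case, the elementarity of pure extensions of models of $\tilde{T}$ in the other, the latter itself being proved via limit models), so neither is more elementary; your route does implicitly require that closure under direct sums include countable ones (which is how the paper uses the notion elsewhere), though the same argument applied to $\tilde{M}_T\oplus\tilde{M}_T$, giving $k=k^2$, would avoid even that. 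A minor bonus of your write-up is that your step (b) treats arbitrary nonempty families of models of $\tilde{T}$ at once, whereas the paper records the two-summand case.
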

\begin{proof}
Recall that $\tilde{M}_T$ is the $(2^{|T|}, \omega)$-limit model of $\K^T$ .

\fbox{Claim} $Inv(\tilde{M}_T,\phi, \psi)=1$ or $\infty$ for every $\phi, \psi$ $pp$-formulas in one free
variable such that $\textbf{Th}_R \vdash \psi \to \phi$.

 \underline{Proof of Claim}: Let  $\phi, \psi$ $pp$-formulas such that $\textbf{Th}_R
\vdash \psi \to \phi$ and assume for the sake of contradiction that $Inv(\tilde{M}_T,\phi, \psi)=k>1$ for $k \in \mathbb{N}$. Since $\Ka$ is closed under direct sums, $\tilde{M}_T \oplus \tilde{M}_T \in \Ka$ and by Fact \ref{univ} there is $f: \tilde{M}_T \oplus \tilde{M}_T \to \tilde{M}_T$ pure embedding.  Then:

\[ k^2= Inv( \tilde{M}_T \oplus \tilde{M}_T, \phi, \psi) = Inv( f[\tilde{M}_T \oplus \tilde{M}_T], \phi, \psi) \leq Inv( \tilde{M}_T , \phi, \psi) =k\]

The first equality and last inequality follow from \cite[2.23]{prest}. Clearly the above inequality gives us a contradiction. $\dagger_{\text{Claim}}$

Let $N_1, N_2$ be models of $\tilde{T}$. To show that $N_1\oplus N_2$ is a model of $\tilde{T}$, by \cite[2.18]{prest} it is enough to show that $Inv(N_1 \oplus N_2, \phi, \psi) = Inv(\tilde{M}_T, \phi, \psi)$ for every $\phi, \psi$ $pp$-formulas in one free
variable such that $\textbf{Th}_R \vdash \psi \to \phi$. Since $Inv(N_1 \oplus N_2, \phi, \psi) = Inv(N_1, \phi, \psi) 
Inv(N_2, \phi, \psi)$ (by \cite[2.23]{prest}), the result follows from the above claim.\end{proof}

\subsection{Superstability in classes closed under direct sums} In this section we will characterize superstability in classes of modules with pure embeddings closed under direct sums. Several examples of classes satisfying this hypothesis are given in \cite[3.10]{kuma}. 

\begin{remark}
Given $T$ a theory of modules, if $\Ka$ is closed under direct sums then $\Ka$ satisfies Hypothesis \ref{hyp} by \cite[3.8]{kuma}. Nevertheless, we keep Hypothesis \ref{hyp} as an assumption to make the presentation smoother.
\end{remark}

In \cite[\S 4]{kuma} limit models on classes of the form $\K^T$ were studied. Below we record the two assertions we will use in this paper.

\begin{fact}[{\cite[4.5]{kuma}}]\label{bigpi}
Assume $\lambda\geq |T|^+$. If $M$ is a $(\lambda,
\alpha)$-limit model in $\Ka$ and $\cof(\alpha)\geq |T|^+$, then $M$ is
pure-injective.
\end{fact}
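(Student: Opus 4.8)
The plan is to prove pure-injectivity through the algebraic characterization of \emph{algebraic compactness}: a module $M$ is pure-injective if and only if every finitely satisfiable set $p(\bar x)$ of $pp$-formulas in a finite tuple of variables, with parameters in $M$, is realized in $M$ (see \cite{prest}). So I fix such a $p$ and aim to produce a realization inside $M$. The two ingredients will be \textup{(a)} a reduction showing that $p$ is equivalent to a subtype of size at most $|T|$ whose parameters lie in a set of size at most $|T|$, and \textup{(b)} the fact that a sufficiently long limit model is saturated, so that $pp$-types over small submodels are realized. The point of \textup{(a)} is essential: $M$ has size $\lambda$ and so cannot realize arbitrary $pp$-types over all of $M$, hence naive saturation is useless; the reduction is exactly what lets us exploit only $|T|^+$-saturation.

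For \textup{(b)}, since $\cof(\alpha)\geq |T|^+=\LS(\Ka)^+$, it is standard that the $(\lambda,\alpha)$-limit model $M$ is isomorphic to a $(\lambda,\cof(\alpha))$-limit model (limit models whose chains have the same cofinality are isomorphic), and $\cof(\alpha)$ is a regular cardinal in $[\LS(\Ka)^+,\lambda]$ because $\alpha<\lambda^+$. Thus Fact \ref{limits}.(1) applies and gives that $M$ is $\cof(\alpha)$-saturated, in particular $|T|^+$-saturated; so $M$ realizes every Galois-type over a $\leq_{pp}$-submodel of size $\leq |T| <\cof(\alpha)$.

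The heart of the argument is the reduction \textup{(a)}, where the coset structure of $pp$-definable sets enters. There are at most $|T|$ $pp$-formula shapes $\phi(\bar x,\bar y)$, and for each fixed shape every instance $\phi(\bar x,\bar m)\in p$ defines in $M$ either the empty set or a coset of the $pp$-definable subgroup $\phi(M,\bar 0)$. By finite satisfiability of $p$, any two instances $\phi(\bar x,\bar m_1),\phi(\bar x,\bar m_2)\in p$ have a common solution in $M$, so their cosets meet and therefore coincide. Hence all instances of a fixed shape $\phi$ occurring in $p$ cut out a single coset, already witnessed by one chosen instance $\phi(\bar x,\bar m_\phi)$. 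Consequently $p$ is equivalent to $q:=\{\phi(\bar x,\bar m_\phi):\phi \text{ a shape occurring in } p\}$, a set of at most $|T|$ formulas whose parameters lie in a set of size at most $|T|$.

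To conclude, I would pick (using the L\"owenheim--Skolem--Tarski axiom) a submodel $N_0\leq_{pp}M$ of size $\leq |T|$ containing all the $\bar m_\phi$. The subtype $q$ is finitely satisfiable, hence realized by some $\bar d$ in a pure extension of $N_0$; by Fact \ref{pp=gtp} this corresponds to a Galois-type over $N_0$, which by $\cof(\alpha)$-saturation is realized by some $\bar c\in M$. Applying Fact \ref{pp=gtp} once more, $\bar c$ satisfies each representative $\phi(\bar x,\bar m_\phi)$, so it lies in every coset cut out by $p$, and by the reduction it satisfies all of $p$. Thus $p$ is realized in $M$, proving that $M$ is pure-injective. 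I expect the reduction \textup{(a)} to be the main obstacle, specifically verifying that instances of a single shape collapse to one coset and that realizing the finite-shape subtype $q$ genuinely forces realization of the full type $p$; this is precisely what trades the uncontrollably large type (up to $\lambda$ parameters) for one of size $\leq |T|$ handled by the saturation coming from $\cof(\alpha)\geq |T|^+$.
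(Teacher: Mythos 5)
Your argument is correct, and it is worth noting that the paper itself imports this statement as a Fact from \cite[4.5]{kuma} without proof, so the natural comparison is with that proof and with the paper's parallel argument for embeddings, Lemma \ref{lim-inj}. Your overall strategy is the same one: reduce pure-injectivity to realizing pp-types with at most $|T|$ parameters, then use $\cof(\alpha)\geq |T|^+$ to capture those parameters inside the chain. You differ in two places. First, you re-derive the ``small types suffice'' reduction by hand via the coset-collapsing argument; this is exactly the content of the standard criterion in \cite[\S 2]{prest} (a pp-instance defines a coset of $\phi(M,\bar 0)$, jointly satisfiable instances of one shape define the same coset), which the source simply quotes. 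Your derivation is right, including the point that the coset identifications are computed in $M$ and that the eventual realization is also found in $M$, so nothing is lost. Second, for realizing the small type you route through Fact \ref{limits}.(1): pass to the cofinal subchain, get $\cof(\alpha)$-saturation, and realize the Galois-type via Fact \ref{pp=gtp}. The proof of Lemma \ref{lim-inj} (and of \cite[4.5]{kuma}) is more direct: since $|T|<\cof(\alpha)$ the parameters of the reduced type land in some $M_i$, and universality of $M_{i+1}$ over $M_i$ pulls a realization from an extension back into $M$; this avoids the saturation machinery entirely and also sidesteps the small mismatch that the paper's definition of saturation is stated for $1$-types while you need finite tuples (you would have to invoke the equivalence with $\lambda$-model-homogeneity, or reduce the pure-injectivity criterion to one free variable, either of which is fine). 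Both routes are valid; the direct one is more elementary, while yours makes visible why the statement is really a saturation phenomenon.
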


\begin{fact}[{\cite[4.9]{kuma}}]\label{countablelim} Assume $\lambda\geq
|T|^+$ and $\K^T$ is closed under direct sums. If $M$ is
a $(\lambda, \omega)$-limit model and  $N$ is a
$(\lambda, |T|^+)$-limit model, then $M$ is isomorphic to $N^{(\aleph_0)}$.
\end{fact}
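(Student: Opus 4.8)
The plan is to show that $N^{(\aleph_0)}$ is itself a $(\lambda,\omega)$-limit model in $\Ka$, and then to identify it with $M$ via uniqueness of limit models of the same length.

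First I would record the structure of $N$. Since $N$ is a $(\lambda,|T|^+)$-limit model and $|T|^+$ is a successor cardinal, $\cof(|T|^+)=|T|^+\geq |T|^+$, so Fact \ref{bigpi} gives that $N$ is pure-injective; and since $N$ is a limit model, Fact \ref{univ} gives that $N$ is universal in $\Ka_\lambda$. As finite direct sums of pure-injective modules are again pure-injective, each finite power $N^{(k)}$ is a pure-injective member of $\Ka_\lambda$ (using that $\Ka$ is closed under direct sums and that $\lambda$ is infinite).

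The crucial step is an absorption lemma, the pure-embedding analogue of Proposition \ref{easyp}, namely \cite[4.8]{kuma}: if $E\in\Ka_\lambda$ is pure-injective and $U\in\Ka_\lambda$ is universal, then $E\oplus U$ is universal over $E$. I would verify it as follows. Given $E\leq_{pp} N^*\in\Ka_\lambda$, pure-injectivity of $E$ yields an internal decomposition $N^*=E\oplus C$, and universality of $U$ yields a pure embedding $g\colon N^*\to U$. Since $C$ is a direct summand of $N^*$ we have $C\leq_{pp} N^*$, so $g\rest_C\colon C\to U$ is again a pure embedding, whence $\id_E\oplus(g\rest_C)\colon N^*=E\oplus C\to E\oplus U$ is a pure embedding fixing $E$. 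Crucially, this never requires $C$ to be a model of $T$. Taking $E=N^{(k)}$ and $U=N$, I conclude that $N^{(k+1)}=N^{(k)}\oplus N$ is universal over $N^{(k)}$ for every $k<\omega$.

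Setting $M_k:=N^{(k+1)}$ gives an increasing chain $\langle M_k : k<\omega\rangle$ in $\Ka_\lambda$ with $M_0=N$, each $M_{k+1}$ universal over $M_k$, and union $\bigcup_{k<\omega}M_k=N^{(\aleph_0)}$; continuity is vacuous at length $\omega$, so $N^{(\aleph_0)}$ is a $(\lambda,\omega)$-limit model. Since $M$ is also a $(\lambda,\omega)$-limit model, the uniqueness of limit models of the same length (a standard back-and-forth using the universality of each successor over its predecessor) gives $M\cong N^{(\aleph_0)}$. I expect the main obstacle to be the absorption step: one must embed an arbitrary extension $N^*$ of $E$ into $E\oplus U$ over $E$ without knowing that the complement $C$ lies in $\Ka$, and the point that makes this go through in the generality of $\Ka$ is that $g\rest_C$ is automatically pure, being the restriction of a pure embedding to a pure direct summand.
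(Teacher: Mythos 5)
Your proof is correct and follows essentially the same route as the cited source: you establish the absorption lemma \cite[4.8]{kuma} (the pure-embedding analogue of Proposition \ref{easyp}), use pure-injectivity of $N$ from Fact \ref{bigpi} to make the chain $N, N^{(2)}, N^{(3)}, \dots$ witness that $N^{(\aleph_0)}$ is a $(\lambda,\omega)$-limit model, and finish by uniqueness of limit models of the same length. This is exactly the argument the paper itself echoes, e.g., in the proof of (8) $\Rightarrow$ (2) of Theorem \ref{alge} and in Proposition \ref{count-n}.
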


With this we are ready to obtain the next result.

\begin{lemma}\label{sigma-inj} Assume $\K^T$ is closed under direct sums.
If there exists $\mu \geq |T|^+$ such that $\K^T$ has uniqueness of limit models of cardinality $\mu$, then every $(\lambda, \alpha)$-limit model is $\Sigma$-pure-injective for every $\lambda \geq|T|$ and $\alpha < \lambda^+$ limit ordinal.
\end{lemma}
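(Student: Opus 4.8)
The plan is to reduce the statement about arbitrary $(\lambda,\alpha)$-limit models to a statement about a single well-understood model, namely a countable-cofinality limit model of large cardinality, and then invoke the algebraic fact that being $\Sigma$-pure-injective is inherited by pure submodules (Fact \ref{pp-inj}). The key leverage is Fact \ref{countablelim}, which identifies a $(\lambda,\omega)$-limit model with $N^{(\aleph_0)}$ for $N$ a $(\lambda,|T|^+)$-limit model, since $M^{(\aleph_0)}$ being pure-injective is exactly the definition of $M$ being $\Sigma$-pure-injective.

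First I would fix $\lambda \geq |T|$ and a limit ordinal $\alpha < \lambda^+$, and let $M$ be a $(\lambda,\alpha)$-limit model. I would pass to a sufficiently large cardinal $\chi$ where I can exploit the uniqueness hypothesis: choose $\chi \geq \mu + \lambda^{|T|}$ with $\chi = \chi^{|T|}$ (so that $\K^T$ is $\chi$-stable by \cite[3.16]{kuma} and limit models of length $\chi^+$ exist, and moreover uniqueness of limit models of cardinality $\chi$ holds — here I would need uniqueness to propagate from $\mu$ up to $\chi$, which is where the superlimit machinery of Fact \ref{limits} or a direct argument via Fact \ref{ipi} applied to pure-injective limit models comes in). Concretely, by Fact \ref{existence} take a $(\chi,\omega)$-limit model $P$ and a $(\chi,|T|^+)$-limit model $Q$; by Fact \ref{countablelim}, $P \cong Q^{(\aleph_0)}$. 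Since $\cof(|T|^+) = |T|^+ \geq |T|^+$, Fact \ref{bigpi} gives that $Q$ is pure-injective, and then $Q^{(\aleph_0)} \cong P$ being (a union of a chain, hence) pure-injective would follow if uniqueness forces $P$ to agree with the pure-injective $Q$-type object — this identification shows $Q^{(\aleph_0)}$ is pure-injective, i.e. $Q$ is $\Sigma$-pure-injective.

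Next I would embed $M$ purely into this $\Sigma$-pure-injective model. Since any limit model is universal in its cardinality (Fact \ref{univ}), and since one can form a limit model of cardinality $\chi$ over any given model of size $\leq \chi$, there is a pure embedding $M \leq_{pp} Q$ (taking $Q$ of cardinality $\chi \geq \|M\|^{|T|}$ and using universality, as in the proof of Lemma \ref{up-close}). Because $Q$ is $\Sigma$-pure-injective and $M \leq_{pp} Q$, the first bullet of Fact \ref{pp-inj} yields that $M$ is $\Sigma$-pure-injective, completing the argument.

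The main obstacle I anticipate is the propagation of the uniqueness-of-limit-models hypothesis from the single cardinal $\mu$ to the large cardinal $\chi$ I want to work at — the hypothesis is only assumed at $\mu \geq |T|^+$, whereas I need a pure-injective reference model at a cardinal comfortably above $\|M\|$. The cleanest route is probably to not insist on uniqueness at $\chi$ at all, but instead to argue directly: use Fact \ref{bigpi} to get that $Q$ (a $(\chi,|T|^+)$-limit model, which exists by $\chi$-stability) is pure-injective, use Fact \ref{countablelim} to identify the $(\chi,\omega)$-limit model as $Q^{(\aleph_0)}$, and — here is the delicate point — deduce from the uniqueness at $\mu$ that $Q^{(\aleph_0)}$ is pure-injective. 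This last deduction is the crux, and I expect the author threads it by invoking Fact \ref{limits} together with the fact that uniqueness at one cardinal feeds back into the structure of the long limit models, making $Q$ genuinely $\Sigma$-pure-injective; once that is established the inheritance under pure submodules is routine.
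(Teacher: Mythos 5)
Your plan has a genuine gap, and it is exactly the point you flag as ``the crux'' at the end: to make your argument work you must produce a $\Sigma$-pure-injective limit model at a cardinal $\chi\geq\|M\|^{|T|}$, but the only way you propose to get it is via uniqueness (or the isomorphism of the $(\chi,\omega)$- and $(\chi,|T|^+)$-limit models) at $\chi$, and the hypothesis only gives uniqueness at the single cardinal $\mu$. Nothing in Fact \ref{limits} lets you propagate uniqueness from $\mu$ upward at this stage --- indeed, in the paper that propagation (Lemma \ref{one-all}) is a \emph{consequence} of the present lemma, so your route is circular as sketched. The parenthetical claim that $Q^{(\aleph_0)}\cong P$ is ``a union of a chain, hence pure-injective'' is also not an argument: unions of chains of pure-injective modules need not be pure-injective. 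For $\lambda\leq\mu$ your pure-submodule strategy could be salvaged by embedding $M$ into the $(\mu,\omega)$-limit model, but for $\lambda>\mu$ it cannot, since $M$ does not purely embed into anything of size $\mu$.

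The missing idea is the transfer mechanism the paper uses instead of pure embeddings: all limit models of $\K^T$ (in every cardinality) are elementarily equivalent (Fact \ref{limeq}), and $\Sigma$-pure-injectivity is preserved under elementary equivalence (the \emph{second} bullet of Fact \ref{pp-inj}, whereas you only invoke the first). With this, one never leaves the cardinal $\mu$: take $N$ a $(\mu,\omega)$-limit model and $N^*$ a $(\mu,|T|^+)$-limit model; Fact \ref{countablelim} gives $N\cong (N^*)^{(\aleph_0)}$, uniqueness at $\mu$ gives $N\cong N^*$, so $(N^*)^{(\aleph_0)}\cong N^*$, which is pure-injective by Fact \ref{bigpi}; hence $N^*$ is $\Sigma$-pure-injective, and any $(\lambda,\alpha)$-limit model $M$ is elementarily equivalent to $N^*$ and therefore $\Sigma$-pure-injective. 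Your first two steps (using Fact \ref{countablelim} and Fact \ref{bigpi} to convert uniqueness into $\Sigma$-pure-injectivity of the long limit model) are exactly right; replacing the pure-embedding transfer by the elementary-equivalence transfer closes the gap and removes the need for any large auxiliary cardinal.
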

\begin{proof}
Let $M \in \K^T_\lambda$ be a $(\lambda, \alpha)$-limit model. Fix $N \in \K^{T}_\mu$ be a $(\mu, \omega)$-limit model  and $N^* \in \K^{T}_\mu$ be a $(\mu, |T|^+)$-limit model.  By Fact \ref{countablelim} we have that  $N \cong  (N^{*})^{(\omega)}$.

Then by uniqueness of limit models of size $\mu$ we have that $N \cong N^*$. Hence $(N^{*})^{(\omega)} \cong N^*$. Moreover, $N^*$ is pure-injective  by Fact \ref{bigpi}. Therefore, $N^*$ is $\Sigma$-pure-injective. 

Finally, observe that by Fact \ref{limeq} $N^* $ is elementary equivalent to $M$, hence $M$ is $\Sigma$-pure-injective by Fact \ref{pp-inj}. 
\end{proof}

Observe that in the above proof we did not use the full-strength of uniqueness of limit models of size $\mu$, but the weaker statement that the $(\mu, \omega)$-limit model is isomorphic to the $(\mu, |T|^+)$-limit model. We record it as a corollary for future reference.

\begin{cor}\label{cor-inj} Assume $\K^T$ is closed under direct sums.
If there exists $\mu \geq |T|^+$ such that the $(\mu, \omega)$-limit model is isomorphic to the $(\mu, |T|^+)$-limit model, then every $(\lambda, \alpha)$-limit model is $\Sigma$-pure-injective for every $\lambda \geq|T|$ and $\alpha < \lambda^+$ limit ordinal.
\end{cor}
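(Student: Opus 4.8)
The plan is to track how the proof of Lemma \ref{sigma-inj} actually used the hypothesis and isolate the single consequence it needed. Inspecting that proof, the full uniqueness of limit models of cardinality $\mu$ was invoked only at one point: to pass from the isomorphism $N \cong (N^*)^{(\omega)}$ (supplied by Fact \ref{countablelim}) to the conclusion $N \cong N^*$, where $N$ is the $(\mu, \omega)$-limit model and $N^*$ is the $(\mu, |T|^+)$-limit model. Since the hypothesis of the corollary gives us precisely $N \cong N^*$ directly, the remainder of the argument carries over verbatim.

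First I would fix $\lambda \geq |T|$ and a limit ordinal $\alpha < \lambda^+$, and let $M \in \K^T_\lambda$ be a $(\lambda, \alpha)$-limit model. Next I would take the $(\mu, \omega)$-limit model $N$ and the $(\mu, |T|^+)$-limit model $N^*$ guaranteed by $\lambda$-stability (Fact \ref{existence}, using that $\K^T$ is $\lambda$-stable for suitable cardinals via \cite[3.16]{kuma}), where $\mu \geq |T|^+$ is the cardinal provided by the hypothesis. By Fact \ref{countablelim} we have $N \cong (N^*)^{(\omega)}$, and by hypothesis $N \cong N^*$; combining these yields $(N^*)^{(\omega)} \cong N^*$. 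Since $N^*$ is pure-injective by Fact \ref{bigpi} (its cofinality is $|T|^+ \geq |T|^+$), the isomorphism $(N^*)^{(\omega)} \cong N^*$ witnesses that $N^*$ is $\Sigma$-pure-injective. Finally, by Fact \ref{limeq} the limit model $N^*$ is elementary equivalent to $M$, so $M$ is $\Sigma$-pure-injective by Fact \ref{pp-inj}.

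There is essentially no obstacle here, since this is a bookkeeping refinement of the preceding lemma: the entire content is the observation, already flagged in the remark immediately before the corollary, that the proof of Lemma \ref{sigma-inj} never used uniqueness of limit models beyond the single isomorphism between the $(\mu, \omega)$-limit and the $(\mu, |T|^+)$-limit models. The only point requiring care is confirming that $N^*$ is pure-injective via Fact \ref{bigpi}, which applies because its chain has length $|T|^+$ and hence cofinality $\geq |T|^+$. Thus the cleanest presentation is simply to repeat the short chain of isomorphisms and cite the relevant facts, noting that the weaker hypothesis suffices exactly where the stronger one was used.

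\begin{proof}
The proof is identical to that of Lemma \ref{sigma-inj}, replacing the appeal to uniqueness of limit models of cardinality $\mu$ by the weaker hypothesis. Let $M \in \K^T_\lambda$ be a $(\lambda, \alpha)$-limit model. Fix $N \in \K^{T}_\mu$ a $(\mu, \omega)$-limit model and $N^* \in \K^{T}_\mu$ a $(\mu, |T|^+)$-limit model. By Fact \ref{countablelim} we have $N \cong (N^{*})^{(\omega)}$. By hypothesis $N \cong N^*$, hence $(N^{*})^{(\omega)} \cong N^*$. Moreover, $N^*$ is pure-injective by Fact \ref{bigpi}, so $N^*$ is $\Sigma$-pure-injective. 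Finally, by Fact \ref{limeq} $N^*$ is elementary equivalent to $M$, hence $M$ is $\Sigma$-pure-injective by Fact \ref{pp-inj}.
\end{proof}
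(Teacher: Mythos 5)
Your proof is correct and takes essentially the same approach as the paper: the paper presents this corollary as an observation that the proof of Lemma \ref{sigma-inj} only used uniqueness of limit models to get $N \cong N^*$, which is exactly the substitution you make, with all remaining steps (Fact \ref{countablelim}, Fact \ref{bigpi}, Fact \ref{limeq}, Fact \ref{pp-inj}) cited identically.
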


\begin{lemma}\label{gstability}
Assume $\K^T$ is closed under direct sums.
If there exists $\mu \geq |T|^+$ such that $\K^T$ has uniqueness of limit models of cardinality $\mu$, then $\K^T$ is $\lambda$-stable for every $\lambda \geq |T|$.
\end{lemma}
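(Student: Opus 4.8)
The plan is to reduce $\lambda$-stability of $\K^T$ to $\lambda$-stability of the complete first-order theory $\tilde{T}$ via Lemma \ref{stable}, and then to obtain the latter from the fact that $\Sigma$-pure-injective modules are totally transcendental (Fact \ref{sigma-stability}). The point is that Lemma \ref{sigma-inj} has already been proved under exactly the hypotheses assumed here, so the only real work is to feed $\tilde{M}_T$ into it and translate back through the AEC/first-order dictionary.

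First I would check that $\tilde{M}_T$, the $(2^{|T|}, \omega)$-limit model, is well-defined: since $(2^{|T|})^{|T|} = 2^{|T|}$, the class $\K^T$ is $2^{|T|}$-stable by \cite[3.16]{kuma}, hence a limit model of length $\omega$ at $2^{|T|}$ exists by Fact \ref{existence}. In particular $\tilde{M}_T$ is a $(\lambda, \alpha)$-limit model with $\lambda = 2^{|T|} \geq |T|$ and $\alpha = \omega$, so it lies within the scope of Lemma \ref{sigma-inj}.

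Next, observing that the hypotheses of the present lemma ($\K^T$ closed under direct sums, together with a $\mu \geq |T|^+$ at which uniqueness of limit models holds) are precisely those of Lemma \ref{sigma-inj}, I would apply that lemma to $\tilde{M}_T$ and conclude that $\tilde{M}_T$ is $\Sigma$-pure-injective. By Fact \ref{sigma-stability} it follows that $(Mod(Th(\tilde{M}_T)), \preceq) = (Mod(\tilde{T}), \preceq)$ is $\lambda$-stable for every $\lambda \geq |\tilde{T}|$. Since $\tilde{T}$ is a theory of left $R$-modules, $|\tilde{T}| = |R| + \aleph_0 = |T|$, so $\tilde{T}$ is $\lambda$-stable for every $\lambda \geq |T|$.

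Finally, Lemma \ref{stable} gives, for each $\lambda \geq |T|$, the equivalence of $\lambda$-stability of $\K^T$ and of $\tilde{T}$; applying the direction from $\tilde{T}$-stability to $\K^T$-stability yields that $\K^T$ is $\lambda$-stable for every $\lambda \geq |T|$, as desired. There is no genuine obstacle beyond confirming that $\tilde{M}_T$ really falls under Lemma \ref{sigma-inj}, since the substantive content has already been packaged into Lemma \ref{stable} and Fact \ref{sigma-stability}.
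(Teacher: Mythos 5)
Your proposal is correct and follows exactly the paper's own argument: apply Lemma \ref{sigma-inj} to conclude $\tilde{M}_T$ is $\Sigma$-pure-injective, use Fact \ref{sigma-stability} to get $\lambda$-stability of $\tilde{T}$ for all $\lambda \geq |T|$, and transfer back to $\K^T$ via Lemma \ref{stable}. The extra checks you include (existence of $\tilde{M}_T$ and $|\tilde{T}| = |T|$) are harmless and the paper takes them for granted.
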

\begin{proof}
Since $\Ka$ has uniqueness of limit models of size $\mu$, by Lemma \ref{sigma-inj} $\tilde{M}_T$ is $\Sigma$-pure-injective. Then by Fact \ref{sigma-stability}  $Th(\tilde{M}_T)= \tilde{T}$ is $\lambda$-stable for every $\lambda \geq |T|$. Therefore, it follows from Lemma \ref{stable} that $\K^T$ is $\lambda$-stable for every $\lambda \geq |T|$. 
 \end{proof}

The next result is easy to prove, but due to its importance we record it.

 \begin{cor}\label{easy}
If $M, N$ are limit models and $M, N$ are pure-injective, then $M$ is isomorphic to $N$.
\end{cor}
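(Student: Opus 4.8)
The plan is to show that the two limit models $M$ and $N$ embed purely into each other and then invoke Bumby's theorem for pure-injective modules, namely the second item of Fact \ref{ipi}. Since both $M$ and $N$ are limit models (hence of the same cardinality, say $\lambda$), each is a universal model in $\K^T_\lambda$ by Fact \ref{univ}. Therefore there is a pure embedding $f: M \to N$ and a pure embedding $g: N \to M$.

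Once we have these mutual pure embeddings, the hypothesis that $M$ and $N$ are pure-injective is exactly what is needed to apply Fact \ref{ipi}: it gives that $M$ is isomorphic to $N$. So the argument reduces to two short observations, the existence of mutual pure embeddings from universality, and the Schr\"oder--Bernstein-type property of pure-injective modules under pure embeddings.

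There is essentially no obstacle here, which is why the statement is labelled as easy; the only point requiring a word of care is that both limit models have the same cardinality, so that Fact \ref{univ} applies symmetrically and the pure embeddings land in the correct class $\K^T_\lambda$. I would present the proof as follows.

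\begin{proof}
Since $M$ and $N$ are limit models, they have the same cardinality $\lambda$, and both are universal models in $\K^T_\lambda$ by Fact \ref{univ}. Hence there is a pure embedding $f: M \to N$ and a pure embedding $g: N \to M$. As $M$ and $N$ are pure-injective by hypothesis, Fact \ref{ipi} yields that $M$ is isomorphic to $N$.
\end{proof}
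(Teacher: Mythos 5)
Your proof is correct and is essentially the same as the paper's: mutual pure embeddings from Fact \ref{univ}, then the Schr\"oder--Bernstein property for pure-injective modules (Fact \ref{ipi}). The only minor caveat is that the equality of cardinalities is an implicit standing assumption in how the corollary is applied (it does not literally follow from ``$M, N$ are limit models''), but the paper's own proof makes the same tacit assumption.
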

\begin{proof}
Since $M$,$N$ are limit models, it follows from Fact \ref{univ} that there are $f: M \to N$ and $g: N \to M$ pure embeddings. Then by Fact \ref{ipi} and the hypothesis that $M$, $N$ are pure-injective, we conclude that $M$ is isomorphic to $N$. 
\end{proof}

The next lemma is one of the key assertions of the section. In it we show that if the class is closed under direct sums, uniqueness of limit models in \emph{one} cardinal implies uniqueness of limit models in \emph{all} cardinals. 

\begin{lemma}\label{one-all}
Assume $\K^T$ is closed under direct sums. The following are equivalent.
\begin{enumerate}
\item For every $\lambda \geq  |T|$, $\K^T$ has uniqueness of limit models of cardinality $\lambda$.
\item $\Ka$ is superstable.
\item There exists $\lambda \geq |T|^+$ such that $\K^T$ has uniqueness of limit models of cardinality $\lambda$.
\end{enumerate}
\end{lemma}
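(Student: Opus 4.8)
The plan is to prove the cycle of implications $(1) \Rightarrow (2) \Rightarrow (3) \Rightarrow (1)$, since the hard content is concentrated in closing the loop from a single cardinal back to all cardinals, and the machinery of the preceding lemmas is designed precisely for this.

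First, $(1) \Rightarrow (2)$ is immediate from the definition of superstability: condition (1) asserts uniqueness of limit models for \emph{every} $\lambda \geq |T|$, which in particular holds in a tail of cardinals, so $\Ka$ is superstable. The implication $(2) \Rightarrow (3)$ is equally direct, since superstability gives uniqueness of limit models in a tail of cardinals, and any such tail contains some $\lambda \geq |T|^+$; one only needs to note that the existence of a limit model of that cardinality is guaranteed because uniqueness-of-limit-models includes existence (by the convention in the definition following Fact \ref{existence}, existence is equivalent to $\lambda$-stability, which holds here).

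The substantive implication is $(3) \Rightarrow (1)$, and this is where Lemma \ref{gstability} and Corollary \ref{easy} do the work. Assume there exists $\mu \geq |T|^+$ witnessing uniqueness of limit models. By Lemma \ref{gstability}, $\K^T$ is $\lambda$-stable for \emph{every} $\lambda \geq |T|$. Stability guarantees, via Fact \ref{existence}, the existence of limit models of every cardinality $\lambda \geq |T|$ and of every limit length $\alpha < \lambda^+$, so the existence half of uniqueness is handled uniformly. For the uniqueness half, fix $\lambda \geq |T|$ and let $M, N$ be any two limit models of cardinality $\lambda$. The point is that each is $\Sigma$-pure-injective: by Lemma \ref{sigma-inj} (whose hypothesis is exactly the existence of $\mu$ as above), every $(\lambda, \alpha)$-limit model is $\Sigma$-pure-injective, hence in particular pure-injective. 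Then Corollary \ref{easy} applies directly to the pure-injective limit models $M$ and $N$, yielding $M \cong N$. This gives uniqueness of limit models of cardinality $\lambda$ for the fixed but arbitrary $\lambda \geq |T|$, which is precisely condition (1).

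The main obstacle — and the reason the lemma is nontrivial — is the transfer from one cardinal $\mu$ to all cardinals, which is not a formal AEC fact but genuinely uses that the class is closed under direct sums. That transfer is already packaged in Lemma \ref{gstability}: uniqueness at $\mu$ forces $\tilde{M}_T$ to be $\Sigma$-pure-injective (via the isomorphism $(N^*)^{(\omega)} \cong N^*$ coming from Fact \ref{countablelim}), and $\Sigma$-pure-injectivity of a single model propagates to $\lambda$-stability for all $\lambda \geq |T|$ through the total transcendence of $\tilde{T}$ (Fact \ref{sigma-stability}) and Lemma \ref{stable}. So the only care needed in writing the proof is to invoke the correct lemma for each half of condition (1) — Lemma \ref{gstability} for existence of limit models at every cardinal, and the combination of Lemma \ref{sigma-inj} with Corollary \ref{easy} for their uniqueness — and to observe that these lemmas' shared hypothesis is met precisely by the single-cardinal assumption (3).
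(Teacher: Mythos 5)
Your proposal is correct and follows essentially the same route as the paper: (1) $\Rightarrow$ (2) $\Rightarrow$ (3) are immediate, and (3) $\Rightarrow$ (1) is obtained by combining Lemma \ref{gstability} (stability at every $\lambda \geq |T|$, hence existence of limit models via Fact \ref{existence}) with Lemma \ref{sigma-inj} and Corollary \ref{easy} to get uniqueness. The paper's proof is exactly this argument, so no further comparison is needed.
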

\begin{proof}
(1) implies (2) and (2) implies (3) are clear, so we show (3) implies (1).

 Let $\lambda \geq |T|$. By Lemma \ref{gstability} it follows that $\K^T$ is $\lambda$-stable. Hence for every $\alpha < \lambda^+$ limit ordinal there is a $(\lambda, \alpha)$-limit model by Fact \ref{existence}. So we only need to show uniqueness of limit models. Let $M$ and $N$ two limit models of cardinality $\lambda$. By Lemma \ref{sigma-inj} we have that $M$ and $N$ are both pure-injective modules. Therefore, it follows from the above corollary that $M$ is isomorphic to  $N$. 
\end{proof}

We will give several additional equivalent conditions to the ones of Lemma \ref{one-all}, but before we do that  let us characterize superlimits in classes of modules.

\begin{lemma}
Assume $\Ka$ is $\lambda$-stable and $\lambda \geq |T|^+$. If $M$ is a superlimit of size $\lambda$, then $M$ is pure-injective. Moreover, if $\Ka$ is closed under direct sums, then $M$ is $\Sigma$-pure-injective.
\end{lemma}
\begin{proof}[Proof sketch]
 By Fact \ref{limits}.(3) $M$ is isomorphic to every $(\lambda, \alpha)$-limit model for $\alpha < \lambda^+$ limit ordinal. Then by Fact \ref{bigpi} $M$ is pure-injective. For the moreover part, observe that the existence of  a superlimit and the stability assumption imply uniqueness of limit models. Therefore, by Lemma \ref{sigma-inj}, $M$ is $\Sigma$-pure-injective.
\end{proof}

The following lemma can be proven using a similar technique to Fact \ref{limits}.(2) and using \cite[4.5]{kuma}.

\begin{lemma}\label{lim-gst}
If $M$ is a $(\lambda, |T|^+)$-limit model in $\Ka$, then $M$ is saturated in $\Ka$.
\end{lemma}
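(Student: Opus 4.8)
The plan is to show that a $(\lambda, |T|^+)$-limit model $M$ realizes every Galois-type over a smaller model, and the cleanest route is to reduce Galois-saturation to first-order saturation via the theory $\tilde{T}$ and the dictionary established in the previous subsections. First I would recall that by Fact \ref{limits}.(1), since $|T|^+$ is a regular cardinal in the interval $[\LS(\K)^+, \lambda]$, a $(\lambda, |T|^+)$-limit model is already $|T|^+$-saturated. The substance of the lemma is therefore to bootstrap from $|T|^+$-saturation up to full $\lambda$-saturation, which is exactly where the structural information coming from Fact \ref{bigpi} (pure-injectivity) and the translation to $\tilde{T}$ becomes essential.

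The core argument would run as follows. Let $M$ be a $(\lambda, |T|^+)$-limit model. By Fact \ref{bigpi}, since $\cof(|T|^+) = |T|^+ \geq |T|^+$, the model $M$ is pure-injective; and by Corollary \ref{limeqq} (or Fact \ref{limeq}) $M$ is a model of $\tilde{T}$. Now take $L \leq_{pp} M$ with $\| L \| < \lambda$ and a Galois-type $p \in \gS(L)$. Choose $a$ in a pure extension $L^*$ of $L$ realizing $p$, where $L^*$ may be taken to be a $(\| L\|^{|T|}, \omega)$-limit model; by Fact \ref{limeq}, $L^* \equiv M$, so the first-order type $tp(a/L, L^*)$ lies in $S^{\tilde T}(L)$. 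By Fact \ref{pp=gtp}, realizing the Galois-type $p$ over $L$ in $M$ is equivalent to realizing this first-order type $tp(a/L, L^*)$ in $M$. Hence it suffices to show that $M$, as a model of $\tilde{T}$, is $\lambda$-saturated in the first-order sense, i.e.\ that $M$ realizes every first-order $\tilde{T}$-type over parameter sets of size $< \lambda$. This is precisely the content one extracts from Lemma \ref{st=gst}, whose $(1)\Rightarrow(2)$ direction states that a model of $\tilde{T}$ that is $\lambda$-saturated in $\tilde{T}$ is $\lambda$-saturated in $\Ka$.

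Thus the whole lemma hinges on establishing that the $(\lambda, |T|^+)$-limit model is $\lambda$-saturated in $\tilde{T}$ in the first-order sense. The most direct way is to exploit pure-injectivity together with the fact that $\Sigma$-pure-injective and totally transcendental behaviour controls first-order saturation: the $(\lambda,|T|^+)$-limit model, being pure-injective and a model of $\tilde{T}$, inherits enough homogeneity that the standard equivalence between $\lambda$-model-homogeneity and $\lambda$-saturation (stated after Definition \ref{limit} for $\lambda > \LS(\K)$) can be applied. Concretely, I would use that $M$ is $|T|^+$-model-homogeneous in $\Ka$ by Fact \ref{limits}.(1), and leverage Lemma \ref{up-close} to promote pure embeddings over submodels of $M$ to elementary embeddings, so that the model-homogeneity witnesses can be chosen to respect the first-order structure of $\tilde{T}$; combined with the counting argument of Lemma \ref{stable}, this yields the required realization of types.

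The main obstacle I anticipate is the jump from the automatic $|T|^+$-saturation (Fact \ref{limits}.(1)) to full $\lambda$-saturation, since a limit model built along a chain of length only $|T|^+$ need not a priori be saturated at the top cardinality $\lambda$. The argument sketched above is supposed to bypass this by passing through the first-order theory $\tilde{T}$, where the pure-injectivity of $M$ forces the algebraic structure (via invariants $\Inv(M,\phi,\psi)$ and the pp-type dictionary of Fact \ref{pp=gtp}) to be rich enough that $M$ is genuinely $\lambda$-saturated. Making the reduction to $\tilde T$ airtight — in particular checking that the downward Löwenheim–Skolem hulls used to locate parameter sets and the universality of the auxiliary $(\| L\|^{|T|}, \omega)$-limit model interact correctly with Lemma \ref{st=gst} — is the delicate bookkeeping step, and this is why the author notes that the proof proceeds ``using a similar technique to Fact \ref{limits}.(2) and using \cite[4.5]{kuma}.''
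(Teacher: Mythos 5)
Your reductions are fine as far as they go, but the heart of the proof is missing. Translating the Galois type through Fact \ref{pp=gtp} and reducing Galois-saturation to first-order saturation of $\tilde{T}$ via Lemma \ref{st=gst} is legitimate, but it only relocates the difficulty: you must still realize types over parameter sets of every size $\mu<\lambda$ inside a model built from a chain of length only $|T|^+$, and none of the mechanisms you invoke does this. $\Sigma$-pure-injectivity and totally transcendental behaviour are not available: the lemma is proved for an arbitrary theory of modules satisfying Hypothesis \ref{hyp} (it is used, for instance, in the proof of (6)$\Rightarrow$(2) of Theorem \ref{equivalent} before any superstability is known), and already for $T=\textbf{Th}_{\mathbb{Z}}$ the theory $\tilde{T}$ is strictly stable. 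Pure-injectivity by itself yields no saturation at all: the group of $p$-adic integers is pure-injective, yet it omits the consistent type $\{p^n \mid x : n<\omega\}\cup\{x\neq 0\}$ over the empty set, so it is not even $\aleph_0$-saturated. Fact \ref{limits}.(1) gives only $|T|^+$-saturation, and Lemma \ref{stable} counts types but cannot produce realizations of them. So the jump you yourself flag as ``the main obstacle'' is exactly the gap, and your sketch does not close it.

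The argument the paper compresses into ``a similar technique to Fact \ref{limits}.(2) together with \cite[4.5]{kuma}'' goes through \emph{other} limit models rather than through $\tilde{T}$. Since a $(\lambda,|T|^+)$-limit model exists, $\Ka$ is $\lambda$-stable by Fact \ref{existence}, so for every $\mu$ with $|T|\le\mu<\lambda$ there is a $(\lambda,\mu^+)$-limit model $M^\mu$, witnessed by a chain $\{M_i : i<\mu^+\}$. Given $N\leq_{pp}M^\mu$ with $\|N\|\le\mu$ and $p=\gtp(a/N;N')\in\gS(N)$, regularity of $\mu^+$ puts $N$ inside some $M_i$; amalgamating $N'$ with $M_i$ over $N$ and using that $M_{i+1}$ is universal over $M_i$, one realizes $p$ in $M^\mu$ --- this is the technique from Fact \ref{limits}. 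Finally, $M$ and $M^\mu$ are both pure-injective by Fact \ref{bigpi}, as the cofinalities $|T|^+$ and $\mu^+$ are at least $|T|^+$, and each purely embeds into the other by Fact \ref{univ}; hence $M\cong M^\mu$ by Fact \ref{ipi} (Corollary \ref{easy}). Therefore $M$ realizes every Galois type over every $N\leq_{pp}M$ with $\|N\|<\lambda$, i.e., $M$ is saturated. Note that this works directly with Galois types; the detour through first-order saturation of $\tilde{T}$ is unnecessary, and without an argument of this kind it cannot be completed.
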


The following theorem characterizes superstability in classes of modules closed under direct sums.\footnote{Conditions (3) through (6) of the theorem below were motivated by \cite[1.3]{grva}.}

\begin{theorem}\label{equivalent}
Assume $\K^T$ is closed under direct sums. The following are equivalent.
\begin{enumerate}
\item $\Ka$ is superstable.
\item There exists $\lambda \geq |T|^+$ such that $\Ka$ has uniqueness of limit models of cardinality $\lambda$.
\item  For every $\lambda \geq |T|$, $\Ka$ has uniqueness of limit models of cardinality $\lambda$.
\item For every $\lambda \geq |T|^+$, $\Ka$ has a superlimit of cardinality $\lambda$.
\item  For every $\lambda \geq |T|$, $\K^T$ is $\lambda$-stable.
\item  For every $\lambda \geq |T|^+$, an increasing chain of $\lambda$-saturated models in $\Ka$ is $\lambda$-saturated in $\Ka$.
\end{enumerate}
\end{theorem}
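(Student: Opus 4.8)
The plan is to prove Theorem \ref{equivalent} as a cycle of implications, leaning heavily on the machinery already developed in this section and on the translation between $\K^T$ and the complete first-order theory $\tilde{T}$. The backbone of the argument is the equivalence of (1), (2) and (3), which is precisely Lemma \ref{one-all}; since that lemma is already established, the real work is to fold in conditions (4), (5) and (6). I would organize the proof so that each of these three conditions is shown equivalent to the cluster $\{(1),(2),(3)\}$, typically by proving it implies a member of the cluster and is implied by another member.

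First I would dispatch (5). That (1)--(3) imply (5) is exactly Lemma \ref{gstability} (uniqueness of limit models in one cardinal gives $\lambda$-stability everywhere), so I would simply cite it. For the converse, if $\K^T$ is $\lambda$-stable for every $\lambda \geq |T|$, then in particular limit models exist in every such cardinal by Fact \ref{existence}; combining stability with Fact \ref{bigpi} (long limit models are pure-injective) and Corollary \ref{easy} (pure-injective limit models of the same cardinality are isomorphic) yields uniqueness of limit models, giving (3) or even (1) directly. Next I would handle (4). Given superstability, for $\lambda \geq |T|^+$ the class is $\lambda$-stable and has a saturated model (a $(\lambda,|T|^+)$-limit model is saturated by Lemma \ref{lim-gst}); by Fact \ref{limits}.(4) a superlimit exists iff the union of an increasing chain of saturated models in $\K^T_\lambda$ is saturated. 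To verify this union condition I would use that saturated models of size $\lambda$ are limit models, hence $\Sigma$-pure-injective by Lemma \ref{sigma-inj}, and run the now-standard Bumby-style back-and-forth (Fact \ref{ipi}) to identify the union with $M_0$, exactly as in the noetherian case (proof of Theorem \ref{main2}, (1)$\Rightarrow$(5)). Conversely, if a superlimit of size $\lambda$ exists then by Fact \ref{limits}.(3) it is isomorphic to every $(\lambda,\alpha)$-limit model, which forces uniqueness of limit models of cardinality $\lambda$ and hence (2).

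Finally I would treat (6), the chain condition on $\lambda$-saturated models. Here the translation lemmas are decisive: Lemma \ref{st=gst} identifies $\lambda$-saturation in $\K^T$ with $\lambda$-saturation in the complete theory $\tilde{T}$ (for $\lambda \geq |T|^+$), so the statement in (6) becomes a purely first-order statement about $\tilde{T}$. If (1)--(3) hold, then $\tilde{T}$ is $\lambda$-stable for all $\lambda \geq |T|$ by Lemma \ref{gstability} together with Lemma \ref{stable}, and in fact $\tilde{M}_T$ is $\Sigma$-pure-injective, so $\tilde{T}$ is totally transcendental by Fact \ref{sigma-stability}; for a superstable (indeed t.t.) first-order theory the union of a chain of $\lambda$-saturated models is $\lambda$-saturated, which transports back through Lemma \ref{st=gst} to give (6). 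For the converse, from (6) I would extract $\lambda$-stability of $\K^T$ for a suitable $\lambda \geq |T|^+$ (so that limit models of size $\lambda$ are saturated by Lemma \ref{lim-gst} and a $(\lambda,\omega)$-limit model can be built as an increasing union of saturated pieces), and then argue that the saturation of the union forces the $(\lambda,\omega)$-limit model to be saturated, hence isomorphic to the $(\lambda,|T|^+)$-limit model by Fact \ref{limits}.(2); Corollary \ref{cor-inj} then upgrades this single coincidence to $\Sigma$-pure-injectivity of all limit models and thence to (2) via Corollary \ref{easy}.

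The main obstacle I anticipate is making the converse direction of (6) fully rigorous: establishing $\lambda$-stability from the chain condition alone, since (6) is a statement only about sufficiently saturated models and does not a priori furnish the stability needed to guarantee that limit models and saturated models even exist in the relevant cardinal. I expect to sidestep this by passing to $\tilde{T}$ through Lemma \ref{st=gst}, where the Harrington--Shelah/standard first-order fact that the union of a chain of $\lambda$-saturated models being $\lambda$-saturated characterizes superstability is available, and then importing $\lambda$-stability of $\K^T$ from $\lambda$-stability of $\tilde{T}$ via Lemma \ref{stable}. Once stability is in hand, every remaining step reduces to the limit-model identifications already proved, and the cycle closes.
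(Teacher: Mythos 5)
Your overall architecture (the cluster $(1)\Leftrightarrow(2)\Leftrightarrow(3)$ via Lemma \ref{one-all}, then attaching $(4)$, $(5)$, $(6)$) matches the paper, and most of your steps are the paper's. However, there is a genuine gap in your converse direction for $(5)$. You propose to obtain uniqueness of limit models from ``$\K^T$ is $\lambda$-stable for every $\lambda\geq|T|$'' by combining Fact \ref{existence}, Fact \ref{bigpi} and Corollary \ref{easy}. But Fact \ref{bigpi} only applies to $(\lambda,\alpha)$-limit models with $\cof(\alpha)\geq|T|^+$; it says nothing about the $(\lambda,\omega)$-limit model, while uniqueness of limit models requires \emph{all} limit models of cardinality $\lambda$, including the short ones, to be isomorphic. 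Indeed, if your combination sufficed it would yield uniqueness of limit models at any single cardinal of stability, which is false for strictly stable classes: for $R$ not left pure-semisimple, $\K^{\textbf{Th}_R}$ is $2^{|T|}$-stable and has limit models there, yet the $(2^{|T|},\omega)$-limit model is not isomorphic to the $(2^{|T|},|T|^+)$-limit model (this is exactly what Theorem \ref{main} rules out). The hypothesis ``stable at \emph{every} $\lambda\geq|T|$'' has to be used essentially: the paper transfers it to $\tilde{T}$ via Lemma \ref{stable}, invokes \cite[3.1]{prest} to conclude that every model of $\tilde{T}$ is $\Sigma$-pure-injective, notes that limit models are models of $\tilde{T}$ by Corollary \ref{limeqq}, and only then applies Corollary \ref{easy}. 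Without this step (or an equivalent device such as Corollary \ref{cor-inj}) your $(5)\Rightarrow(2)$ does not go through.

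The remaining directions are essentially fine, with two smaller remarks. In $(4)\Rightarrow(2)$, Fact \ref{limits}.(3) needs $\lambda$-stability at the cardinal carrying the superlimit, which you do not address; since $(4)$ holds for every $\lambda\geq|T|^+$, take $\lambda=2^{|T|}$, where stability is automatic by \cite[3.16]{kuma} --- this is how the paper argues (as in $(5)\Rightarrow(2)$ of Theorem \ref{main2}). The same choice dissolves the obstacle you anticipate in $(6)\Rightarrow(2)$: there is no need to extract stability from the chain condition; the paper starts at $\lambda=2^{|T|}$, takes $M$ the $(2^{|T|},|T|^+)$-limit model, uses closure under direct sums, Bumby's theorem (Fact \ref{ipi}) and Fact \ref{countablelim} to exhibit the $(2^{|T|},\omega)$-limit model as the union $M^{(\aleph_0)}$ of the saturated chain of finite powers of $M$, and then applies $(6)$, Corollary \ref{cor-inj} and Corollary \ref{easy}. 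Your fallback through $\tilde{T}$ and the first-order characterization of superstability by unions of saturated chains is viable but is really the argument of the later Theorem \ref{equivalent2}. Finally, your direct Bumby-style proof of $(1)\Rightarrow(4)$ (identifying the union of a chain of saturated models with $M_0$) differs from the paper, which instead derives $(4)$ from the already established $(5)$ and $(6)$ via Fact \ref{limits}; both routes work.
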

\begin{proof}

(1) $\Leftrightarrow$ (2) $\Leftrightarrow$ (3) By Lemma \ref{one-all}.

(2) $\Rightarrow$ (5) By Lemma \ref{gstability}.

(5) $\Rightarrow$ (2) By Lemma \ref{stable} $\tilde{T}$ is $\lambda$-stable for every $\lambda\geq |T|$. Then by \cite[3.1]{prest} every model of $\tilde{T}$ is $\Sigma$-pure-injective. Let $\lambda = |T|^+$. By $\lambda$-stability and Fact \ref{existence}  there are $(\lambda, \alpha)$-limit models for every $\alpha < \lambda^+$ limit ordinal. The uniqueness of limit models of cardinality $\lambda$ follows from the fact that limit models are pure-injective, since they are models of $\tilde{T}$ by Corollary \ref{limeqq}, and by Corollary \ref{easy}.

(5) $\Rightarrow$ (6) Let $\{ M_i : i < \delta \} \subseteq \Ka$ be an increasing chain of $\lambda$-saturated models. By Lemma \ref{st=gst} every $M_i$ is a model of $\tilde{T}$ and $\lambda$-saturated in $\tilde{T}$. Moreover, by Lemma \ref{up-close}, for every $i < j$ we have that $M_i \preceq M_j$. Therefore, $\{ M_i : i < \delta \}$ is an increasing chain of $\lambda$-saturated models in $\tilde{T}$. 

Then by hypothesis and Lemma \ref{stable} $\tilde{T}$ is superstable as a first-order theory. Hence, by \cite{harnik}, $\bigcup_{i < \delta} M_i$ is a $\lambda$-saturated model of $\tilde{T}$. Therefore, by Lemma \ref{st=gst}, $\bigcup_{i < \delta} M_i$ is $\lambda$-saturated in $\Ka$.

(6) $\Rightarrow$ (2) Let $\lambda=2^{|T|}$ and $M$ be a $(2^{|T|}, |T|^+)$-limit model. By Fact \ref{bigpi} $M$ is pure-injective and by Lemma \ref{lim-gst} $M$ is saturated. Consider $\{ M^{n} : 0 < n < \omega \}$ an increasing chain in $\K_{2^{|T|}}$ where $M^{n}$ denotes $n$-many direct copies of $M$. Observe that each $M^n$ is pure-injective (because pure-injective modules are closed under finite direct sums) and that there are pure embeddings between $M^n$ and $M$ and vice versa by Fact \ref{univ}. Therefore, by Fact \ref{ipi}, $M \cong M^n$ for every $n > 0$. So in particular, $M^{n}$ is $\|M\|$-saturated for every $n > 0$.

 Then by hypothesis $M^{(\aleph_0)}$ is $\|M\|$-saturated, so $M^{(\aleph_0)}$ is saturated. Since saturated models of the same cardinality are isomorphic, $M^{(\aleph_0)} \cong M$. On the other hand, by Fact \ref{countablelim}, $M^{(\aleph_0)}$ is the $(2^{|T|}, \omega)$-limit model. Then by Corollary \ref{cor-inj} every limit model is $\Sigma$-pure-injective. Therefore, by Corollary \ref{easy}, $\Ka$ has uniqueness of limit models of size $2^{|T|}$. 

(4) $\Rightarrow$ (2) Similar to (5) implies (2) of Theorem \ref{main2}.

(3) $\Rightarrow$ (4) Let $\lambda \geq |T|^+$. By condition (5) $\Ka$ is $\lambda$-stable, so let $M$ be a  $(\lambda, |T|^+)$-limit model. By  Lemma \ref{lim-gst} $M$ is $\|M\|$-saturated. Moreover, by condition (6) any increasing chain of $\| M \|$-saturated models is $\| M \|$-saturated. Therefore it follows from Fact \ref{limits}.(3) that there is a superlimit of cardinality $\lambda$.

\end{proof}

\begin{remark}
In \cite[1.3]{grva} cardinals $\mu_{\ell}$ for $\ell \in\{1,...,7 \}$ were introduced. In the introduction of \cite{grva} it is asked if it is possible to calculate the values of the $\mu_\ell$'s for certain AECs. The above lemma shows that in classes of modules satisfying Hypothesis \ref{hyp} and closed under direct sums  $\mu_3=\mu_7=|T|$ and $\mu_4, \mu_5 \leq |T|^+$. We did not calculate the values of $\mu_1, \mu_2$ and $\mu_6$ since they measure properties that are more technical than the ones presented above and which we have not introduced. We hope to study those properties in future work. 
\end{remark}

\begin{remark} Let $T$ be a theory of modules such that Hypothesis \ref{hyp} holds. Then by Fact \ref{pp=gtp}  $\Ka$ is $(< \aleph_0)$-tame and by hypothesis $\Ka$ has amalgamation, joint embedding and no maximal models. Therefore, we could have simply quoted \cite[1.3]{grva} to obtain (4),(5),(6) imply (2) of the above theorem. We decided to provide the proofs for those directions to make the paper more transparent and since the proofs in our case are easier than in the general case. An important difference between our methods and those of \cite{grva} and \cite{vaseyt} is that the results of Grossberg and Vasey do not use the hypothesis that the class is closed under direct sums, but only Hypothesis \ref{hyp}. We will come back to this in Subsection 4.4.
\end{remark}

\subsection{Algebraic characterizations of superstability and pure-semisimple rings} We begin by giving algebraic characterizations of  superstability  in classes of modules closed under direct sums.

\begin{theorem}\label{alge} Assume $\K^T$  is closed under direct sums. The following are equivalent.
\begin{enumerate}
\item $\Ka$ is superstable.
\item There exists $\lambda \geq |T|^+$ such that $\Ka$ has uniqueness of limit models of cardinality $\lambda$.
\skipitems{4}
\item Every $M\in \Ka$ is pure-injective.
\item There exists $\lambda \geq |T|^+$ such that $\Ka$ has a $\Sigma$-pure-injective universal model in $\Ka_\lambda$.
\item Every limit model in $\Ka$ is $\Sigma$-pure-injective. 
\end{enumerate}

\end{theorem}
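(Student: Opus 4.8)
The plan is to prove Theorem \ref{alge} by showing a cycle of implications that ties together the superstability conditions (1) and (2), already proven equivalent in Theorem \ref{equivalent}, with the new algebraic conditions (7), (8), and (9). I would organize the argument as a cycle such as $(2) \Rightarrow (9) \Rightarrow (7) \Rightarrow (8) \Rightarrow (2)$, inserting $(1) \Leftrightarrow (2)$ for free from the previous theorem. The key observation driving everything is the identification of limit models with pure-injective modules via Fact \ref{bigpi} and Corollary \ref{easy}, together with the $\Sigma$-pure-injectivity machinery developed in Lemma \ref{sigma-inj}.

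First, for $(2) \Rightarrow (9)$: assuming uniqueness of limit models of cardinality $\lambda$ for some $\lambda \geq |T|^+$, I would apply Lemma \ref{sigma-inj} directly, which states that every $(\lambda', \alpha)$-limit model is $\Sigma$-pure-injective for every $\lambda' \geq |T|$ and limit $\alpha < \lambda'^+$. This is essentially immediate. Next, for $(9) \Rightarrow (7)$, I would take an arbitrary $M \in \Ka$ and must show it is pure-injective. The idea is that any $M$ purely embeds into some limit model (for instance by embedding $M$ into a large enough limit model, which is universal by Fact \ref{univ}), and since that limit model is $\Sigma$-pure-injective hence pure-injective, and pure submodules of pure-injective modules are pure-injective (Fact \ref{pp-inj}), I need to argue $M$ itself is pure-injective. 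The cleaner route may be to show every limit model is $\Sigma$-pure-injective forces $\tilde{T}$ to be $\lambda$-stable for all $\lambda \geq |T|$ via Fact \ref{sigma-stability}, whence by \cite[3.1]{prest} every model of $\tilde{T}$ is $\Sigma$-pure-injective; then since every $M \in \Ka$ purely embeds into a model of $\tilde{T}$ (Corollary after Lemma \ref{up-close}), and $M \leq_{pp} N$ with $N$ $\Sigma$-pure-injective gives $M$ $\Sigma$-pure-injective by Fact \ref{pp-inj}, in particular pure-injective.

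For $(7) \Rightarrow (8)$, assuming every module in $\Ka$ is pure-injective, I would simply take any universal model $U$ of cardinality $\lambda \geq |T|^+$ (which exists by $\lambda$-stability, itself following from Lemma \ref{gstability} or directly from Fact \ref{existence} once stability is established); $U$ is pure-injective by hypothesis, and I would upgrade to $\Sigma$-pure-injective either by noting $U^{(\aleph_0)} \in \Ka$ is also pure-injective by (7), which is exactly the characterization in Fact after Fact \ref{c-inj}, or by reusing the $\tilde{T}$-stability argument. Finally $(8) \Rightarrow (2)$: from a $\Sigma$-pure-injective universal model in $\Ka_\lambda$, I would argue that the relevant limit models embed into it and conversely, and use Fact \ref{pp-inj} (a pure submodule of a $\Sigma$-pure-injective module is $\Sigma$-pure-injective) to transfer $\Sigma$-pure-injectivity to the limit models, then conclude uniqueness via Corollary \ref{easy}.

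The main obstacle I anticipate is the direction $(8) \Rightarrow (2)$: having only a \emph{universal} model that is $\Sigma$-pure-injective is a priori weaker than controlling limit models directly, so the delicate point is transferring $\Sigma$-pure-injectivity from the universal model down to the limit models. The cleanest fix is likely to observe that the universal model and the limit models are all elementarily equivalent to $\tilde{M}_T$ by Fact \ref{limeq} (since they are models of $\tilde{T}$, or purely embed appropriately), and then invoke the second bullet of Fact \ref{pp-inj}, which says $\Sigma$-pure-injectivity is preserved under elementary equivalence. Once $\tilde{M}_T$ is seen to be $\Sigma$-pure-injective, every model of $\tilde{T}$ inherits it via Fact \ref{sigma-stability} and \cite[3.1]{prest}, and uniqueness of limit models follows from Corollary \ref{easy}. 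Getting the universal model to be elementarily equivalent to $\tilde{M}_T$ is the step requiring care, but it should follow from Lemma \ref{up-close} by purely embedding the universal model into a suitable limit model.
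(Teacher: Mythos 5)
Your overall architecture is the same as the paper's: the cycle $(1)\Leftrightarrow(2)$ via Theorem \ref{equivalent}, then $(2)\Rightarrow(9)\Rightarrow(7)\Rightarrow(8)\Rightarrow(2)$, and your arguments for $(2)\Rightarrow(9)$ (Lemma \ref{sigma-inj}), $(9)\Rightarrow(7)$ (purely embed $M$ into a limit model and use the first bullet of Fact \ref{pp-inj} --- note that the fact is about $\Sigma$-pure-injective modules, not pure-injective ones, but that is what you actually need and have), and $(7)\Rightarrow(8)$ (a limit/universal model $U$ with $U^{(\aleph_0)}\in\Ka$ pure-injective, hence $U$ is $\Sigma$-pure-injective) coincide with the paper's. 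One small slip in $(7)\Rightarrow(8)$: you cannot get the needed $\lambda$-stability from Lemma \ref{gstability}, since that lemma assumes condition (2), which is what the cycle is heading towards; the paper instead takes $\lambda=2^{|T|}$ and uses the unconditional stability result \cite[3.16]{kuma} (stability whenever $\lambda^{|T|}=\lambda$), which is also what you implicitly need in $(9)\Rightarrow(7)$ to have a limit model over $M$.

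The genuine gap is in $(8)\Rightarrow(2)$, exactly the step you flagged. First, condition (2) includes the \emph{existence} of limit models of cardinality $\lambda$, i.e.\ $\lambda$-stability, and nothing in your sketch produces this from the mere existence of a $\Sigma$-pure-injective universal model $U\in\Ka_\lambda$; your phrase ``the relevant limit models embed into it and conversely'' presupposes there are limit models of that cardinality to talk about. Second, your proposed fix --- that $U$ is elementarily equivalent to $\tilde{M}_T$ ``by purely embedding the universal model into a suitable limit model'' and Lemma \ref{up-close} --- uses Lemma \ref{up-close} in the wrong direction: that lemma upgrades a pure \emph{extension of} a model of $\tilde{T}$ to an elementary extension, so from $U\leq_{pp}N'$ with $N'$ a limit model you learn nothing about $Th(U)$; a pure submodule of a model of $\tilde{T}$ need not be elementarily equivalent to it. The paper closes both holes at once: it builds the chain $\{M^n : 0<n<\omega\}$ of finite powers of $U$ (using closure under direct sums), shows $M^{n+1}$ is universal over $M^n$ by \cite[4.8]{kuma} (the pure-embedding analogue of Proposition \ref{easyp}, which needs $M^n$ pure-injective, available since $U$ is $\Sigma$-pure-injective), so $U^{(\aleph_0)}$ is a $(\lambda,\omega)$-limit model; this gives $\lambda$-stability via Fact \ref{existence}, and then any limit models $N_1,N_2$ of cardinality $\lambda$ are elementarily equivalent to $U^{(\aleph_0)}$ by Fact \ref{limeq}, hence pure-injective by Fact \ref{pp-inj}, and isomorphic by Corollary \ref{easy}. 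If you want to salvage your $\tilde{T}$-route instead, the correct manoeuvre is the reverse embedding: take a model $\tilde{M}_0$ of $\tilde{T}$ of cardinality $|T|$, purely embed it into $U$ (pad it to cardinality $\lambda$ using closure under direct sums and apply universality), and then Lemma \ref{up-close} does give $U\models\tilde{T}$; from there $\tilde{M}_T$ is $\Sigma$-pure-injective by Fact \ref{pp-inj}, stability everywhere follows from Fact \ref{sigma-stability} and Lemma \ref{stable}, and uniqueness from Corollary \ref{limeqq}, Fact \ref{pp-inj} and Corollary \ref{easy}. As written, however, the step is not correct.
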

\begin{proof}
(1) $\Rightarrow$ (2) By Theorem \ref{equivalent}.

(2) $\Rightarrow$ (9) By Lemma \ref{sigma-inj}.

(9) $\Rightarrow$ (7) Let $M$ be an $R$-module and let $\mu = \| M \|^{|R|+\aleph_0}$.

By \cite[3.16]{kuma} $\Ka$ is $\mu$-stable, so let $N$ be a $(\mu, \omega)$-limit model such that $M \leq_{pp} N$. Then by hypothesis $N$ is $\Sigma$-pure-injective. As $M \leq_{pp} N$, by Fact \ref{pp-inj} it follows that $M$ is $\Sigma$-pure-injective. Hence $M$ is pure-injective.

(7) $\Rightarrow$ (8) Let $\lambda = 2^{|T|}$, then by \cite[3.16]{kuma} $\Ka$ is $\lambda$-stable. Let $M$ be a $(\lambda, \omega)$-limit model. $M$ is universal in $\Ka_\lambda$ by Fact \ref{univ} and $M$ is $\Sigma$-pure-injective by condition (7) and closure under direct sums.

(8) $\Rightarrow$ (2) Let $\lambda \geq |T|^+$ and $M$ be a $\Sigma$-pure-injective universal model in $\Ka_\lambda$. As in (6) to (2) of Theorem \ref{equivalent}, consider $\{ M^{n} : 0 < n  < \omega \}$ an increasing chain in $\Ka_\lambda$. Observe that $M^{n+1}$ is universal over $M^{n}$ for every $n > 0 $ by \cite[4.8]{kuma}. Hence the chain witnesses that $M^{(\aleph_0)}$ is a $(\lambda, \omega)$-limit model in $\Ka$. So $\Ka$ is $\lambda$-stable by Fact \ref{existence}.  We are left to show uniqueness of limit models of size $\lambda$. Let $N_1$ and $N_2$ be two limit models of cardinality $\lambda$. Since $M^{(\aleph_0)}$, $N_1$ and $N_2$ are elementary equivalent by Fact \ref{limeq} and $M$ is $\Sigma$-pure-injective by hypothesis, then $N_1$ and $N_2$ are pure-injective by Fact \ref{pp-inj}. So from Corollary \ref{easy} we conclude that $N_1$ is isomorphic to $N_2$. Therefore, $\Ka$ has uniqueness of limit models of cardinality $\lambda$.

\end{proof}

\begin{remark}
In  \cite[4.10]{kuma}, stability is characterized by the existence of a pure-injetive universal model. The equivalence between (1) and (8) is another instance of how the existence of a universal model can be used to characterize an stability-like property. 
\end{remark}

 The next theorem gives many equivalent conditions to the notion of pure-semisimple ring and relates it to superstability. It follows directly from Theorem \ref{equivalent} and Theorem \ref{alge} as a ring $R$ is pure-semisimple if and only if every $M \in \K^{\textbf{Th}_R}$ is pure-injective (Definition \ref{pss}).\footnote{Conditions (4) through (7) of the theorem below were motivated by \cite[1.3]{grva}.}

\begin{theorem}\label{main}
For a ring $R$ the following are equivalent.
\begin{enumerate}
\item $R$ is left pure-semisimple.
\item $\K^{\textbf{Th}_R}$ is superstable.
\item There exists $\lambda \geq (|R| + \aleph_0)^+$ such that $\K^{\textbf{Th}_R}$  has uniqueness of limit models of cardinality $\lambda$.
\item  For every $\lambda \geq |R| + \aleph_0$, $\K^{\textbf{Th}_R}$  has uniqueness of limit models of cardinality $\lambda$.
\item For every $\lambda \geq (|R| + \aleph_0)^+$, $\K^{\textbf{Th}_R}$  has a superlimit of cardinality $\lambda$.
\item  For every $\lambda \geq |R| + \aleph_0$, $\K^{\textbf{Th}_R}$  is $\lambda$-stable.
\item  For every $\lambda \geq (|R| + \aleph_0)^+$, an increasing chain of $\lambda$-saturated models in $\K^{\textbf{Th}_R}$  is $\lambda$-saturated in $\K^{\textbf{Th}_R}$ .
\item There exists $\lambda \geq  (|R| + \aleph_0)^+$ such that $\K^{\textbf{Th}_R}$ has a $\Sigma$-pure-injective universal model in $\K^{\textbf{Th}_R}_\lambda$.
\item Every limit model in  $\K^{\textbf{Th}_R}$  is $\Sigma$-pure-injective. 
\end{enumerate}
\end{theorem}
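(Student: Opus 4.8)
The plan is to deduce Theorem \ref{main} almost entirely from the already-established machinery, observing that $\K^{\textbf{Th}_R} = (R\text{-Mod}, \leq_{pp})$ always satisfies Hypothesis \ref{hyp} and, crucially, is closed under direct sums (the class of \emph{all} left $R$-modules is trivially closed under arbitrary direct sums). With $T = \textbf{Th}_R$ we have $|T| = |R| + \aleph_0$, so conditions (2)--(6) of Theorem \ref{main} are literally conditions (1)--(6) of Theorem \ref{equivalent} with $|T|$ replaced by $|R|+\aleph_0$, and conditions (8)--(9) are conditions (8)--(9) of Theorem \ref{alge}. Thus the bulk of the theorem is immediate from those two results applied to $\K^{\textbf{Th}_R}$.

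The genuinely new content is the equivalence of these AEC-theoretic conditions with condition (1), that $R$ is left pure-semisimple. First I would recall Definition \ref{pss} together with Fact \ref{pps}: $R$ is left pure-semisimple if and only if every left $R$-module is (pure-)injective, equivalently $\Sigma$-pure-injective. This is exactly condition (7) of Theorem \ref{alge}, namely ``every $M \in \K^{\textbf{Th}_R}$ is pure-injective''. So I would point out that (1) of Theorem \ref{main} is by definition (and Fact \ref{pps}) the same as condition (7) of Theorem \ref{alge}, which sits in the established chain of equivalences. Concretely: (1) $\Leftrightarrow$ every module is pure-injective $\Leftrightarrow$ (7 of Theorem \ref{alge}) $\Leftrightarrow$ (1 of Theorem \ref{alge}) $\Leftrightarrow$ $\K^{\textbf{Th}_R}$ is superstable, which is condition (2) of Theorem \ref{main}. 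Once (1) $\Leftrightarrow$ (2) is in hand, the remaining conditions follow by citing Theorem \ref{equivalent} and Theorem \ref{alge} directly.

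The one point requiring care is the precise cardinal thresholds: Theorem \ref{main} states conditions (3), (5), (7), (8) starting at $(|R|+\aleph_0)^+$ and conditions (4), (6) starting at $|R|+\aleph_0$, matching the $|T|$ versus $|T|^+$ bookkeeping in Theorem \ref{equivalent} and Theorem \ref{alge}. I would verify that each numbered item of Theorem \ref{main} is the verbatim translation of the correspondingly numbered item in the source theorems under $|T| = |R|+\aleph_0$, so that no reindexing error creeps in (note the \verb|\skipitems| shift in Theorem \ref{alge} aligns items 7, 8, 9 there with items 7, 8, 9 here).

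I do not expect any real obstacle: the entire proof is an application of Theorem \ref{equivalent} and Theorem \ref{alge} to the special case $T = \textbf{Th}_R$, combined with the definitional identification (via Fact \ref{pps}) of pure-semisimplicity with ``every module is pure-injective''. The only thing to double-check is that $\K^{\textbf{Th}_R}$ is closed under direct sums so that both source theorems apply; this holds because the class of all modules is closed under arbitrary direct sums. Accordingly, the proof can be stated in a single line: the result follows directly from Theorem \ref{equivalent} and Theorem \ref{alge} together with the fact that $R$ is left pure-semisimple if and only if every $M \in \K^{\textbf{Th}_R}$ is pure-injective.
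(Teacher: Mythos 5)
Your proposal is correct and is essentially the paper's own proof: the paper also deduces Theorem \ref{main} directly from Theorem \ref{equivalent} and Theorem \ref{alge} applied to $T=\textbf{Th}_R$ (noting closure under direct sums), identifying condition (1) with condition (7) of Theorem \ref{alge} via Definition \ref{pss}. The only cosmetic difference is that you invoke Fact \ref{pps}, whereas the definition of left pure-semisimple already gives the identification outright.
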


 As it was mentioned in the introduction, Shelah noticed that superstability of the class of modules and pure-semisimple rings are equivalent in Theorem 8.7 of \cite{shlaz} (without mentioning pure-semisimple rings). The precise equivalence Shelah noticed is similar to the equivalence between (1) and (6) of the above theorem. Shelah does not prove that pure-semisimplicty implies superstability of the theory of modules ((2) to (1) of his Theorem 8.7). The notion of superstability studied in \cite[\S 8]{shlaz} is not first-order superstability, but superstability with respect to positive primitive formulas. This is equivalent to our notion of superstability in the class of modules by Fact \ref{pp=gtp}. For the direction that Shelah provides a proof ((1) to (5) of his Theorem 8.7), his technique is different from ours as he proceeds by contradiction and builds a tree of formulas.

One can add one more equivalent condition to the above theorem.

\begin{lemma}
For a ring $R$ the following are equivalent.
\begin{enumerate}
\item $R$ is left pure-semisimple.
\skipitems{1}
\item There exists $\lambda \geq (|R| + \aleph_0)^+$ such that $\K^{\textbf{Th}_R}$ has uniqueness of limit models of cardinality $\lambda$.
\skipitems{6}
\item For all theories of modules $T$ and for all $\lambda \geq |R|+\aleph_0$, if $\K^T$ satisfies Hypothesis \ref{hyp} and it is closed under direct sums, then $\K^T$ has uniqueness of limit models of cardinality $\lambda$.
\end{enumerate}
\end{lemma}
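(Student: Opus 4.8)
The three conditions are labelled $(1)$, $(3)$ and $(10)$ precisely so that they coincide with the numbering of Theorem \ref{main}: conditions $(1)$ and $(3)$ are verbatim the first and third conditions of that theorem, so their equivalence is already in hand. The plan is therefore to add the single new condition $(10)$ to the loop, by proving $(1)\Rightarrow(10)$ and $(10)\Rightarrow(3)$ and invoking Theorem \ref{main} for $(3)\Rightarrow(1)$.

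For $(10)\Rightarrow(3)$ I would simply instantiate the universally quantified statement at the theory $T=\textbf{Th}_R$. The class $\K^{\textbf{Th}_R}=(R\text{-Mod},\leq_{pp})$ satisfies Hypothesis \ref{hyp} and is evidently closed under direct sums, so $(10)$ yields uniqueness of limit models of cardinality $\lambda$ for every $\lambda\geq|R|+\aleph_0$; choosing any such $\lambda\geq(|R|+\aleph_0)^+$ gives $(3)$.

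The substance lies in $(1)\Rightarrow(10)$, and the key observation is that \emph{every} model occurring in \emph{any} class $\K^T$ is an $R$-module, since by Notation \ref{not} a theory of modules $T$ extends $\textbf{Th}_R$. Hence, if $R$ is left pure-semisimple, Definition \ref{pss} shows that every $M\in\K^T$ is pure-injective. This is exactly condition $(7)$ of Theorem \ref{alge}, so by Theorem \ref{alge} the class $\K^T$ is superstable, and then by Theorem \ref{equivalent} it has uniqueness of limit models of cardinality $\lambda$ for every $\lambda\geq|T|$. Since $|T|=|R|+\aleph_0$ by Notation \ref{not}, this is precisely the conclusion of $(10)$ for the given $T$ and $\lambda$.

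I anticipate no serious obstacle here: the argument is a direct corollary of the machinery of Subsections 4.2--4.3, once one notices that pure-semisimplicity of $R$ is inherited, uniformly in $T$, by \emph{all} theories of modules at once. The only point requiring mild care is the cardinal bookkeeping, namely the identity $|T|=|R|+\aleph_0$, which is what makes the threshold $|T|$ produced by Theorem \ref{equivalent} match the threshold $|R|+\aleph_0$ demanded in $(10)$.
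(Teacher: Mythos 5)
Your proposal is correct and matches the paper's own argument: the paper proves $(1)\Rightarrow(10)$ by exactly your observation that pure-semisimplicity makes every $M\in\K^T$ pure-injective, i.e.\ condition $(7)$ of Theorem \ref{alge}, and notes that $(10)\Rightarrow(3)$ is the obvious instantiation at $T=\textbf{Th}_R$, with $(3)\Leftrightarrow(1)$ already supplied by Theorem \ref{main}. No discrepancies to report.
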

\begin{proof}
$(1)$ implies $(10)$ follows from condition $(7)$ of Theorem \ref{alge}. Moreover, it is clear that $(10)$ implies $(3)$.
\end{proof}

Assuming that the ring is left coherent, Theorem \ref{alge} can further be used to obtain a new characterization of left noetherian rings.

\begin{lemma}\label{absol}
Let $R$ be a left coherent ring and $K_{\text{abs}}$ be the class of absolutely pure left $R$-modules. $R$ is left noetherian if and only if $\K_{A}=(K_{\text{abs}}, \leq)$ is superstable.
\end{lemma}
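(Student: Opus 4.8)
The plan is to reduce the statement to the already-proved Theorem \ref{alge} applied to the theory $T = \textbf{Th}_R$, so that the main task is to identify the class $\K_A = (K_{\text{abs}}, \leq)$ of absolutely pure modules with embeddings with a class of the form $\K^{T'}$ for an appropriate theory of modules $T'$, and to show that under left coherence this class is closed under direct sums. First I would recall the relevant algebra: over a left coherent ring the class of absolutely pure left modules is precisely the class of models of a theory of modules $T_{\text{abs}}$ extending $\textbf{Th}_R$ (absolute purity is axiomatizable, and coherence guarantees the class is closed under direct limits and direct products, hence elementary), and moreover over a coherent ring an embedding into an absolutely pure module is automatically a pure embedding. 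This last point is the crucial bridge: it lets me identify $(K_{\text{abs}}, \leq)$ with $(K_{\text{abs}}, \leq_{pp}) = \K^{T_{\text{abs}}}$, so the AEC of absolutely pure modules with embeddings coincides with the AEC of absolutely pure modules with pure embeddings.

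Next I would verify that $\K^{T_{\text{abs}}}$ satisfies Hypothesis \ref{hyp} and is closed under direct sums. Coherence is exactly what guarantees that a direct sum of absolutely pure modules is again absolutely pure, giving closure under direct sums; by the remark following Hypothesis \ref{hyp} (citing \cite[3.8]{kuma}), closure under direct sums then yields joint embedding and amalgamation automatically. With these in place, Theorem \ref{alge} applies to $T_{\text{abs}}$ and gives that $\K^{T_{\text{abs}}} = \K_A$ is superstable if and only if every absolutely pure module is pure-injective (condition (7) of that theorem).

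Finally I would close the loop with the algebraic fact that, over a coherent ring, every absolutely pure module being pure-injective is equivalent to left noetherianness. The forward direction is immediate from Fact \ref{equivnoe}: if $R$ is left noetherian, every absolutely pure module is injective, hence pure-injective. For the converse, suppose every absolutely pure module is pure-injective; combined with Fact \ref{sigma-n} and the fact that absolutely pure pure-injective modules over a noetherian-or-coherent setting can be pushed to $\Sigma$-injectivity, one concludes via Fact \ref{equivnoe} that every absolutely pure module is injective and hence $R$ is left noetherian. More carefully, I would argue that superstability forces every limit model (hence, by condition (9) of Theorem \ref{alge}, enough absolutely pure modules) to be $\Sigma$-pure-injective, and an absolutely pure $\Sigma$-pure-injective module is $\Sigma$-injective by Fact \ref{sigma-n}, so every absolutely pure module is injective, yielding left noetherianness through Fact \ref{equivnoe}.(5).

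The main obstacle I expect is the identification step: proving that over a left coherent ring the class of absolutely pure modules with \emph{embeddings} really is an elementary class with pure embeddings coinciding with embeddings, i.e.\ that coherence makes every embedding of modules with absolutely pure codomain pure, and that $K_{\text{abs}}$ is closed under direct sums. Once this structural identification of $\K_A$ with a $\K^{T_{\text{abs}}}$ satisfying the hypotheses of Theorem \ref{alge} is secured, the rest is a direct appeal to Theorem \ref{alge} together with Facts \ref{sigma-n} and \ref{equivnoe}; the model-theoretic content is entirely inherited and the remaining work is purely the coherence-theoretic bookkeeping.
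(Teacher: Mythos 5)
Your proposal follows essentially the same route as the paper: axiomatize $K_{\text{abs}}$ using coherence, identify embeddings with pure embeddings inside the class, get joint embedding and amalgamation from closure under direct sums via \cite[3.8]{kuma}, and then apply Theorem \ref{alge} (superstability if and only if every module in the class is pure-injective) together with Fact \ref{equivnoe}.(5). One correction, though, to the step you single out as the crucial bridge: what makes embeddings between members of $K_{\text{abs}}$ pure is that the \emph{domain} is absolutely pure --- by definition an absolutely pure module is pure in every extension --- not that the codomain is; ``every embedding with absolutely pure codomain is pure'' is false as stated (e.g.\ $\mathbb{Z} \subseteq \mathbb{Q}$ over $\mathbb{Z}$), and no coherence is involved in this identification. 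Likewise, closure of $K_{\text{abs}}$ under direct sums holds over any ring; the only place coherence enters is in making $K_{\text{abs}}$ first-order axiomatizable (\cite[3.4.24]{prest09}), which is what puts you inside the $\K^{T}$ framework of Section 4. Finally, the converse direction closes more simply than your sketch: an absolutely pure module that is pure-injective is a direct summand of every extension, hence injective, so Theorem \ref{alge}.(7) together with Fact \ref{equivnoe}.(5) finishes immediately, with no need to pass through $\Sigma$-pure-injectivity, limit models, or Fact \ref{sigma-n}.
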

\begin{proof} 
Since $R$ is left coherent, by \cite[3.4.24]{prest09}, there is $T$ a first-order theory such that $Mod(T)=K_{\text{abs}}$. Observe that for absolutely pure modules embeddings are the same as pure embeddings. Moreover, absolutely pure modules are closed under direct sums and because of it have joint embedding and amalgamation with respect to pure embeddings by \cite[3.8]{kuma}. Therefore, we can use the results of Theorem \ref{alge}. More precisely, we will show the equivalence with condition (7) of Theorem \ref{alge}, i.e., we will show that $R$ is left noetherian if and only if every absolutely pure module is pure-injective.

If $R$ is left noetherian, then every absolutely pure module is injective by Fact \ref{equivnoe}.(5). Hence every absolutely pure module is pure-injective.

 If every absolutely pure module is pure-injective, then it is clearly injective. Thus $R$ is left noetherian by  Fact \ref{equivnoe}.(5). 
\end{proof}

\begin{remark}
The above assertion is still true without the assumption that $R$ is left coherent. That case lies outside of the scope of the present paper as in that case the class of absolutely pure modules is not first-order axiomatizable. That result is presented in \cite{maz2} .
\end{remark}

\begin{remark}
Assuming that the ring is right coherent, Theorem \ref{alge} can also be used to characterize left perfect rings. The proof is similar to that of Lemma \ref{absol}, but we do not present it as we have not introduced many of the notions needed to setup the proof. Moreover, a proof without the assumption that the ring is right coherent is given in \cite[3.15]{maz1}. That case is significantly more intricate as the class of flat modules is not first-order axiomatizable and is not closed under pure-injective envelopes.
\end{remark}

We think that Theorem \ref{main2}, Theorem \ref{main} and Lemma \ref{absol} hint to the the fact that limit models and superstability could shed light in the understanding of algebraic concepts. They also provide further evidence of the naturality of the notion of superstability.

\subsection{Superstable classes} In this section we will characterize superstability in classes of modules without assuming that the class is closed under direct sums. As in previous subsections we assume Hypothesis \ref{hyp}. In this section we assume the reader has some familiarity with first-order model theory.

Recall that given $T'$ a complete first-order theory, $\kappa(T')$ is the least cardinal such that every type of $T'$ does not fork over a set of size less than $\kappa(T')$. In this case, nonforking refers to first-order nonforking. Since it is well-known that $\kappa(T') \leq |T'|^+$ for stable theories, the following improves \cite[4.7]{kuma}.

\begin{lemma}\label{lim-iso} Let $\lambda \geq |T|$ and $\alpha, \beta < \lambda^+$ be limit ordinals.
If $M$ is a $(\lambda, \alpha)$-limit model in $\Ka$, $N$ is a $(\lambda, \beta)$-limit model in $\Ka$ and $\cof(\alpha), \cof(\beta)\geq \kappa(\tilde{T})$, then $M$ is isomorphic to $N$.
\end{lemma}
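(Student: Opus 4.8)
The plan is to reduce the statement about limit models in $\Ka$ to the corresponding statement in the complete first-order theory $\tilde{T}$, where the analogous fact is classical. The key structural input is Lemma \ref{lim-til}, which says that $M$ is a $(\lambda, \alpha)$-limit model in $\Ka$ if and only if it is a $(\lambda, \alpha)$-limit model in $(Mod(\tilde{T}), \preceq)$; this holds for every limit ordinal $\alpha < \lambda^+$ and every $\lambda \geq |T|$, so it applies to both $\alpha$ and $\beta$ here. Thus $M$ and $N$ are $(\lambda, \alpha)$- and $(\lambda, \beta)$-limit models of the \emph{first-order} theory $\tilde{T}$ under the elementary substructure relation.

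The heart of the matter is then the classical first-order fact that, for a stable theory $T'$, any two limit models of cardinality $\lambda$ whose chain lengths have cofinality at least $\kappa(T')$ are isomorphic. First I would note that $\tilde{T}$ is stable: since $\K^T$ has a limit model (it is $\lambda$-stable for $\lambda^{|T|}=\lambda$ by \cite[3.16]{kuma}), $\tilde{T}$ is $\lambda$-stable in those cardinals by Lemma \ref{stable}, hence stable, so $\kappa(\tilde{T})$ is well-defined and $\kappa(\tilde{T}) \leq |\tilde{T}|^+ = |T|^+$. The condition $\cof(\alpha), \cof(\beta) \geq \kappa(\tilde{T})$ is exactly the hypothesis needed to invoke the standard saturation argument: a $(\lambda,\delta)$-limit model of a stable theory with $\cof(\delta) \geq \kappa(T')$ realizes every type over any elementary submodel of size $< \lambda$ that does not fork over a smaller set, and combined with the nonforking calculus this forces such a limit model to be, up to isomorphism over the base, independent of the chain length. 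I would cite the appropriate reference from the first-order stability literature (Shelah's book or Harnik--Harrington style results; \cite{harnik} is already invoked in the paper for a related chain-of-saturated-models fact) for the precise statement that limit models of the relevant cofinality are isomorphic.

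Concretely, the steps in order are: first, apply Lemma \ref{lim-til} to transfer the hypothesis that $M, N$ are limit models in $\Ka$ into the statement that they are limit models of $\tilde{T}$ under $\preceq$; second, observe $\tilde{T}$ is stable with $\kappa(\tilde{T}) \leq |T|^+$ via Lemma \ref{stable} and \cite[3.16]{kuma}; third, quote the first-order uniqueness-of-limit-models result for stable theories to conclude that $M \cong N$ as models of $\tilde{T}$, which is the same as $M \cong N$ as $R$-modules.

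The main obstacle I anticipate is purely expository: pinning down the exact first-order statement and reference asserting that two limit models of a stable theory with chain cofinalities $\geq \kappa(T')$ are isomorphic, and verifying that the cofinality hypothesis in that reference matches $\cof(\alpha), \cof(\beta) \geq \kappa(\tilde{T})$ here. The mathematical content — that limit models stabilize once the chain is ``long enough'' relative to the local character of forking — is standard, but one must be careful that the first-order notion of universal extension used in defining limit models of $\tilde{T}$ agrees with the one in $\Ka$, which is precisely what Lemma \ref{lim-til} guarantees. Once that alignment is in place, the result is immediate from the classical theory.
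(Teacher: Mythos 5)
Your proposal is correct and follows essentially the same route as the paper: transfer to $(Mod(\tilde{T}), \preceq)$ via Lemma \ref{lim-til}, then invoke the classical first-order fact (cited in the paper as \cite[1.6]{grvavi}) that limit models over chains of cofinality at least $\kappa(\tilde{T})$ are saturated, hence isomorphic by uniqueness of saturated models of the same cardinality. The only difference is that you leave the precise first-order reference to be pinned down, but the argument you sketch is exactly the one used.
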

\begin{proof}
By Lemma \ref{lim-til} $M$ is a $(\lambda, \alpha)$-limit model in  $(Mod(\tilde{T}), \preceq)$ and $N$ is a $(\lambda, \beta)$-limit model in $(Mod(\tilde{T}), \preceq)$.
 Then it follows from \cite[1.6]{grvavi} that $M$ and $N$ are both saturated models of cardinality $\lambda$ in $\tilde{T}$. Therefore, we conclude that $M$ is isomorphic to $N$.
\end{proof}

Recall the following notions introduced in \cite[\S 2]{vaseyt}. Given an AEC $\K$ and $\mu$ a cardinal,  $\text{Stab}(\K) = \{ \mu : \K \text{ is $\mu$-stable} \}$ and  $\underline{\kappa}(\K_\mu, \lea^{\text{univ}})$ is the set of regular cardinals $\chi$ such that whenever $\{ M_i : i < \chi \}$ is an increasing chain in $\K_\mu$ with $M_{i +1}$ is universal over $M_i$ for every $i < \chi$ and $p \in \gS(\bigcup_{i < \chi} M_i)$, then there is $i < \chi$ such that $p$ does not split over $M_i$. Since we will not use the notion of splitting, we will not introduce it. It  is somehow similar to first-order splitting, the definition is presented in \cite[2.3]{vaseyt}.

In \cite[4.12]{kuma} it is asked to describe the spectrum of limit models for classes of the form $\K^T$ where $T$ is a theory of modules. The case when the ring is countable is studied in \cite[4.12]{kuma}. The next result provides a partial solution to that question.

\begin{theorem}\label{lim-big}
Let $\lambda \geq |T|^+$ be a regular cardinal.
Let $M$ be a $(\lambda, \alpha)$-limit model in $\Ka$, then:
\begin{enumerate}
\item If $\cof(\alpha) \geq \kappa(\tilde{T})$, then $M$ is isomorphic to the $(\lambda, \lambda)$-limit model.
\item If $\cof(\alpha) < \kappa(\tilde{T})$, then $M$ is not isomorphic to the $(\lambda, \lambda)$-limit model.
\end{enumerate}
\end{theorem}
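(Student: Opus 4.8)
The plan is to reduce the statement to a known dichotomy for saturation of limit models in the complete first-order theory $\tilde{T}$, transferring between $\Ka$ and $(Mod(\tilde{T}), \preceq)$ via Lemma \ref{lim-til}. The key tool is that $\tilde{T}$ is a stable first-order theory, so $\kappa(\tilde{T}) \leq |\tilde{T}|^+ = |T|^+ \leq \lambda$, and the invariant $\kappa(\tilde{T})$ controls exactly when a chain of length of a given cofinality produces a saturated model.

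First I would set up the translation. By Lemma \ref{lim-til}, since $\lambda \geq |T|$ and $\alpha < \lambda^+$ is a limit ordinal, $M$ is a $(\lambda, \alpha)$-limit model in $\Ka$ if and only if it is a $(\lambda, \alpha)$-limit model in $(Mod(\tilde{T}), \preceq)$; the same applies to the $(\lambda,\lambda)$-limit model. So the entire question can be answered inside the complete first-order theory $\tilde{T}$, where the first-order machinery of forking and $\kappa(\tilde{T})$ is available.

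For part (1), assuming $\cof(\alpha) \geq \kappa(\tilde{T})$: here I would invoke Lemma \ref{lim-iso} directly. Since $\lambda$ is regular, the $(\lambda, \lambda)$-limit model has $\cof(\lambda) = \lambda \geq |T|^+ \geq \kappa(\tilde{T})$, and by assumption $\cof(\alpha) \geq \kappa(\tilde{T})$ as well. Thus both $M$ and the $(\lambda, \lambda)$-limit model are limit models whose chains have cofinality at least $\kappa(\tilde{T})$, so Lemma \ref{lim-iso} gives the isomorphism (both are in fact saturated models of $\tilde{T}$ of cardinality $\lambda$).

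For part (2), assuming $\cof(\alpha) < \kappa(\tilde{T})$: the claim is that $M$ is \emph{not} saturated, hence not isomorphic to the (saturated) $(\lambda,\lambda)$-limit model. The strategy is to use the characterization of $\kappa(\tilde{T})$ as the least cardinal such that unions of increasing chains of that cofinality of saturated models remain saturated. Concretely, writing $M = \bigcup_{i<\alpha} M_i$ with each $M_{i+1}$ universal over $M_i$, the segments $M_i$ can be taken to be (or be refined to) saturated models of size $\lambda$, and with $\cof(\alpha) < \kappa(\tilde{T})$ the failure of $\kappa(\tilde{T})$-local character at a shorter cofinality produces a type over $M$ that forks over every proper initial segment, witnessing non-saturation. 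This is the standard first-order fact that a $(\lambda,\alpha)$-limit model is saturated precisely when $\cof(\alpha) \geq \kappa(\tilde{T})$, available once we are inside $Mod(\tilde{T})$; I would cite the relevant first-order source (e.g. \cite{grvavi} or a standard stability-theory reference) for this direction. The main obstacle is part (2): establishing non-isomorphism requires exhibiting the failure of saturation for short cofinalities, which is subtler than the positive isomorphism result, and care is needed to ensure the first-order non-saturation statement in $\tilde{T}$ transfers back to a genuine distinction in $\Ka$ — but this follows again from Lemma \ref{lim-til} together with the equivalence of saturation in $\Ka$ and in $\tilde{T}$ given by Lemma \ref{st=gst}.
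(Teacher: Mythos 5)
Your part (1) is exactly the paper's argument: Lemma \ref{lim-iso} plus $\kappa(\tilde{T}) \leq |T|^+ \leq \cof(\lambda)$, so there is nothing to add there. The problem is part (2), where you outsource the whole content to ``the standard first-order fact that a $(\lambda,\alpha)$-limit model is saturated precisely when $\cof(\alpha) \geq \kappa(\tilde{T})$'' and propose to cite \cite{grvavi} or ``a standard stability-theory reference.'' \cite{grvavi} does not contain the negative direction (it proves uniqueness of limit models under superstability-type hypotheses; it says nothing about non-saturation of short-cofinality limit models in strictly stable theories), and the sketch you give of why the fact holds does not close the gap. Two steps are missing. First, the definition of $\kappa(\tilde{T})$ gives an increasing chain of \emph{sets} with a type over the top forking over each of them; turning this into a type over the union of a \emph{universal chain of $\lambda$-saturated models} $\{M_i : i < \cof(\alpha)\}$ that forks over every $M_i$ is not automatic, because base monotonicity goes the wrong way: forking over a small witness set $A_i \subseteq M_i$ does not imply forking over the larger model $M_i$. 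Second, and more seriously, even granted such a type $p \in \gS(\bigcup_i M_i)$ forking over every $M_i$, this does not by itself ``witness non-saturation'': $p$ has a nonforking base of size $< \kappa(\tilde{T})$, and when $\cof(\alpha) < \kappa(\tilde{T})$ that base need not be contained in any single $M_i$, so no contradiction with $\lambda$-saturation of the union follows from general forking calculus.

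This is precisely the hole the paper fills with Vasey's results. The paper takes the given $M$, assumes for contradiction it is isomorphic to the $(\lambda,\lambda)$-limit model, and uses \cite[4.8]{vaseyt} (since $\cof(\alpha) < \kappa(\tilde{T})$, local character of \emph{splitting} over universal chains of length $\cof(\alpha)$ fails in $(Mod(\tilde{T}), \preceq)$, which is $\lambda$-stable by Lemma \ref{stable}) to produce a universal chain $\{L_i : i < \cof(\alpha)\}$ and a type over its union splitting over every $L_i$. The union $L$ is a $(\lambda, \cof(\alpha))$-limit model in $\Ka$ by Lemma \ref{lim-til}, hence isomorphic to $M$ by a back-and-forth, hence $\lambda$-saturated (Fact \ref{limits}.(1) plus Lemma \ref{st=gst}); then \cite[4.12]{vaseyt} — types over a saturated union of a universal chain do not split over some member — yields the contradiction. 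So the key missing idea in your write-up is exactly this last implication (saturation of the union forces local character along the chain), together with an actual construction of the bad chain; without these, asserting non-saturation for $\cof(\alpha) < \kappa(\tilde{T})$ is assuming the theorem rather than proving it.
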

\begin{proof}
(1) follows from Lemma \ref{lim-iso} and the fact that $\kappa(\tilde{T}) \leq |T|^+$. So we prove (2). Assume for the sake of contradiction that $M$ is isomorphic to the $(\lambda, \lambda)$-limit model in $\Ka$ and let $\tilde{\K}:= (Mod(\tilde{T}), \preceq)$.

Since $\cof(\alpha) < \kappa(\tilde{T})$, by \cite[4.8]{vaseyt} $\cof(\alpha) \notin \underline{\chi}(\tilde{\K}) = \bigcup_{\mu \in \text{Stab}(\tilde{\K})} \underline{\kappa}(\tilde{\K}_\mu, \leq_{\tilde{\K}}^{\text{univ}})$. Since $M$ is a limit model in $\Ka$, by Fact \ref{existence}, $\Ka$ is $\lambda$-stable so by Lemma \ref{stable} $\tilde{\K}$ is $\lambda$-stable. Hence $\cof(\alpha) \notin  \underline{\kappa}(\tilde{\K}_\lambda, \leq_{\tilde{\K}}^{\text{univ}})$. Then by definition of $\underline{\kappa}$ there is $\{L_i : i < \cof(\alpha) \}$ an increasing chain in $\tilde{\K}_\lambda$ with $L_{i+1}$ universal over $L_i$ and $p \in \gS(\bigcup_{i < \cof(\alpha)} L_i)$ such that $p$ splits over $L_i$ in $\tilde{\K}$ for every $i < \cof(\alpha)$.

Observe that $L=\bigcup_{i < \cof(\alpha)} L_i$ is a $(\lambda, \cof(\alpha))$-limit model in $\K^T$ by Lemma \ref{lim-til}. Then doing a back-and-forth argument $L \cong M$. And since by hypothesis $M$ is isomorphic to the $(\lambda, \lambda)$-limit model, $L$ is isomorphic to it. By Fact \ref{limits}.(1) it follows that $L$ is a $\lambda$-saturated model in $\Ka$. So by Lemma \ref{st=gst} $L$ is $\lambda$-saturated in $\tilde{\K}$. Then by \cite[4.12]{vaseyt} there is $i < \cof(\alpha)$ such that $p$ does not split over $L_i$ in $\tilde{\K}$. This contradicts the choice of the $L_i's$. 
\end{proof}

Moreover, the result of the above theorem gives a positive solution above $|T|^+$ to Conjecture 2 of \cite{bovan} in the case when $T$ is a theory of modules satisfying Hypothesis \ref{hyp}. 

\begin{cor}
Let $T$ be a theory of modules such that  $\K^{T}$
satisfies Hypothesis \ref{hyp}. Assume that $\lambda\geq |T|^+$ is a regular cardinal such that $\K^T$ is $\lambda$-stable. Then  \[\Delta_\lambda:=\{ \alpha < \lambda^+ : \cof(\alpha)=\alpha \text{ and the } (\lambda, \alpha)\text{-limit model is isomorphic to the } (\lambda,\lambda)\text{-limit model}\}\] is an end segment of regular cardinals.
\end{cor}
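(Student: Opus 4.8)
The plan is to read the corollary off directly from Theorem \ref{lim-big}, with essentially no extra construction. First I would unwind the index condition: for an infinite ordinal $\alpha$ the equality $\cof(\alpha) = \alpha$ holds exactly when $\alpha$ is a regular cardinal, and the constraint $\alpha < \lambda^+$ together with regularity forces $\alpha \leq \lambda$. Hence $\Delta_\lambda$ is a set of regular cardinals contained in the closed interval $[\aleph_0, \lambda]$, and to prove it is an end segment it suffices to locate a single threshold below which membership fails and at or above which it holds.

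The natural threshold is $\kappa(\tilde{T})$, so next I would check that it is available and well-placed. Since $\K^T$ is $\lambda$-stable by assumption, Lemma \ref{stable} gives that $\tilde{T}$ is $\lambda$-stable, hence stable, so $\kappa(\tilde{T})$ is defined; moreover $\kappa(\tilde{T}) \leq |\tilde{T}|^+ = |T|^+ \leq \lambda$, and $\kappa(\tilde{T})$ is a regular cardinal (it is the forking-locality cardinal of a stable first-order theory). I would then apply Theorem \ref{lim-big} to each regular cardinal $\alpha \leq \lambda$; note that the $(\lambda,\alpha)$-limit models invoked exist by $\lambda$-stability and Fact \ref{existence}. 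Because $\cof(\alpha) = \alpha$, part (1) of the theorem shows that if $\alpha \geq \kappa(\tilde{T})$ then the $(\lambda, \alpha)$-limit model is isomorphic to the $(\lambda, \lambda)$-limit model, so $\alpha \in \Delta_\lambda$, while part (2) shows that if $\alpha < \kappa(\tilde{T})$ the two are not isomorphic, so $\alpha \notin \Delta_\lambda$.

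Combining the two cases yields the exact description
\[ \Delta_\lambda = \{ \alpha \leq \lambda : \alpha \text{ is a regular cardinal and } \alpha \geq \kappa(\tilde{T}) \}, \]
which is precisely the set of regular cardinals in $[\kappa(\tilde{T}), \lambda]$, i.e.\ an end segment of the regular cardinals below $\lambda^+$. I do not anticipate a genuine obstacle, as all the work is already done in Theorem \ref{lim-big}; the only points requiring care are the bookkeeping identification of $\cof(\alpha) = \alpha$ with regularity and the verification that the threshold $\kappa(\tilde{T})$ is a regular cardinal no larger than $\lambda$, which in particular guarantees the end segment is nonempty since it contains $\lambda$ itself.
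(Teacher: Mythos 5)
Your proposal is correct and follows essentially the same route as the paper, which obtains the corollary directly from Theorem \ref{lim-big}: for regular $\alpha\leq\lambda$ (the only $\alpha<\lambda^+$ with $\cof(\alpha)=\alpha$), part (1) puts $\alpha$ in $\Delta_\lambda$ when $\alpha\geq\kappa(\tilde{T})$ and part (2) excludes it when $\alpha<\kappa(\tilde{T})$, so $\Delta_\lambda$ is exactly the end segment of regular cardinals in $[\kappa(\tilde{T}),\lambda]$. The only caveat is your side claim that $\kappa(\tilde{T})$ is itself regular, which is not needed for the conclusion (an end segment of regular cardinals is obtained from any threshold), so nothing in the argument depends on it.
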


We finish this section by presenting a similar result to Theorem \ref{equivalent}, but without the assumption that $T$ is closed under direct sums.

\begin{theorem}\label{equivalent2}
Assume $\Ka$ satisfies Hypothesis \ref{hyp}. The following are equivalent.
\begin{enumerate}
\item $\Ka$ is superstable.
\item  For every $\lambda \geq 2^{|T|}$, $\Ka$ has uniqueness of limit models of cardinality $\lambda$.
\item For every $\lambda \geq 2^{|T|}$, $\Ka$ has a superlimit of cardinality $\lambda$.
\item  For every $\lambda \geq 2^{|T|}$, $\K^T$ is $\lambda$-stable.
\item  For every $\lambda \geq 2^{|T|}$, an increasing chain of $\lambda$-saturated models in $\Ka$ is $\lambda$-saturated in $\Ka$.
\end{enumerate}
\end{theorem}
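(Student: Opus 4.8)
The plan is to prove Theorem \ref{equivalent2} by reducing everything to the complete first-order theory $\tilde{T}$ and invoking the classical first-order theory of superstability, exactly as the subsection's preliminary lemmas (\ref{stable}, \ref{st=gst}, \ref{lim-til}, \ref{lim-iso}) were set up to allow. The essential observation is that $\tilde{T}$ is a complete first-order theory of modules, so it is superstable in the classical sense if and only if it is $\lambda$-stable for all $\lambda \geq 2^{|T|}$ (equivalently, $\kappa(\tilde{T}) \leq |T|^+$ together with stability cofinally), and every classical fact about saturated models, limit models, and unions of saturated chains transfers to $\Ka$ through the dictionary established in the previous lemmas.

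First I would record the easy implications. The implications $(1) \Rightarrow (2)$ follows because superstability means uniqueness of limit models in a tail of cardinals, so in particular for all $\lambda \geq 2^{|T|}$ after possibly enlarging the bound (and by Theorem \ref{lim-big}.(1), using $\kappa(\tilde{T}) \leq |T|^+$, the uniqueness holds at every regular $\lambda \geq |T|^+$, from which the general case follows by Lemma \ref{lim-iso}). For $(2) \Rightarrow (1)$ the definition of superstability as uniqueness in a tail is immediate. The core equivalence is $(2) \Leftrightarrow (4)$: by Lemma \ref{stable}, $\K^T$ is $\lambda$-stable if and only if $\tilde{T}$ is $\lambda$-stable, and stability of $\tilde{T}$ for all $\lambda \geq 2^{|T|}$ is precisely first-order superstability of $\tilde{T}$; then uniqueness of limit models of cardinality $\lambda$ for $\lambda \geq 2^{|T|}$ follows since by Lemma \ref{lim-til} limit models in $\Ka$ are limit models in $(Mod(\tilde{T}), \preceq)$, and by Lemma \ref{lim-iso} combined with $\kappa(\tilde{T}) \leq |T|^+ \leq \lambda$, all such limit models are saturated models of $\tilde{T}$ of the same cardinality, hence isomorphic.

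Next I would handle the saturated-chain condition $(5)$ and the superlimit condition $(3)$. For $(4) \Rightarrow (5)$: given an increasing chain $\{M_i : i < \delta\}$ of $\lambda$-saturated models in $\Ka$, Lemma \ref{st=gst} shows each $M_i$ is $\lambda$-saturated in $\tilde{T}$ and Lemma \ref{up-close} shows the chain is elementary; since $\tilde{T}$ is superstable, the theorem of Harnik (as used in the proof of Theorem \ref{equivalent}, $(5) \Rightarrow (6)$) gives that $\bigcup_{i<\delta} M_i$ is $\lambda$-saturated in $\tilde{T}$, and Lemma \ref{st=gst} transfers this back to $\Ka$. The converse direction $(5) \Rightarrow (4)$, and the equivalences with $(3)$, would mirror the corresponding arguments in Theorem \ref{equivalent}: from a $(2^{|T|}, |T|^+)$-limit model, which is saturated by Lemma \ref{lim-gst}, one uses the chain condition together with Fact \ref{limits}.(4) to produce superlimits and to recover stability, exactly as in the $(6) \Rightarrow (2)$ and $(3) \Rightarrow (4)$ arguments there, but now without closure under direct sums because the isomorphism $M^{(\aleph_0)} \cong M$ is replaced by the direct appeal to saturation.

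The main obstacle, and the place where this proof genuinely differs from Theorem \ref{equivalent}, is that without closure under direct sums one cannot use Fact \ref{countablelim} to identify the $(\lambda,\omega)$-limit model with $N^{(\aleph_0)}$, nor conclude $\Sigma$-pure-injectivity; consequently the "nice" cardinal cannot be pushed down to $|T|$ or $|T|^+$ but must start at $2^{|T|}$, which is why every clause is stated with the bound $\lambda \geq 2^{|T|}$. At this bound $\Ka$ is automatically $\lambda$-stable by \cite[3.16]{kuma} whenever $\lambda^{|T|} = \lambda$, and more importantly $2^{|T|}$ is large enough that saturated models of $\tilde{T}$ exist and the theory $\kappa(\tilde{T}) \leq |T|^+$ applies cleanly; the delicacy is to verify throughout that the relevant limit models have cofinality at least $\kappa(\tilde{T})$ so that Lemma \ref{lim-iso} yields saturation, which is where the bulk of the care must go rather than into any single hard computation.
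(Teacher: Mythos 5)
Your overall strategy---reduce everything to the complete theory $\tilde{T}$ via Lemmas \ref{stable}, \ref{st=gst}, \ref{lim-til}, \ref{lim-iso} and quote classical first-order superstability---is exactly the paper's approach, but the way you execute the uniqueness directions has a genuine gap. You repeatedly invoke the bound $\kappa(\tilde{T}) \leq |T|^+$ (valid for any stable theory) where the argument actually requires $\kappa(\tilde{T}) = \aleph_0$. Lemma \ref{lim-iso} and Theorem \ref{lim-big}.(1) only identify limit models whose chain has \emph{cofinality} at least $\kappa(\tilde{T})$; the hypothesis there is about $\cof(\alpha)$, not about the size of $\lambda$, so writing ``$\kappa(\tilde{T}) \leq |T|^+ \leq \lambda$'' does not make the $(\lambda,\omega)$-limit models fall under these results. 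Indeed, if $\tilde{T}$ were strictly stable (so $\kappa(\tilde{T}) > \aleph_0$), Theorem \ref{lim-big}.(2) shows the $(\lambda,\omega)$-limit model is \emph{not} isomorphic to the $(\lambda,\lambda)$-limit model, so uniqueness would fail; and your closing remark that one should ``verify throughout that the relevant limit models have cofinality at least $\kappa(\tilde{T})$'' cannot succeed, since $(\lambda,\omega)$-limit models exist as soon as there is $\lambda$-stability. The paper's fix is to first prove that $\tilde{T}$ is superstable as a first-order theory---from (1) via Lemma \ref{lim-til} and Fact \ref{existence} ($\tilde{T}$ stable in a tail of cardinals forces superstability, hence stability at every $\lambda \geq 2^{|T|}$, which also gives the \emph{existence} of limit models at every such $\lambda$, a point your (1)$\Rightarrow$(2) sketch omits), or from (4) via Lemma \ref{stable}---and only then conclude $\kappa(\tilde{T}) = \aleph_0$ by Shelah, so that Lemma \ref{lim-iso} applies to all limit models regardless of cofinality.

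A second, smaller problem is your treatment of (5)$\Rightarrow$(4) and the clauses involving (3): you propose to ``mirror'' the arguments of Theorem \ref{equivalent}, but those arguments (forming the chain $M^n$ and identifying $M^{(\aleph_0)}$ with a limit model via Fact \ref{countablelim}) use closure under direct sums in an essential way, and your claim that this can be ``replaced by the direct appeal to saturation'' is not substantiated. The paper instead argues purely first-order: for (5)$\Rightarrow$(4) it transfers the saturated-chain condition through Lemma \ref{st=gst} and uses the classical characterization of superstability by unions of $\lambda$-saturated models (\cite{shbook}, see also \cite[1.1.(2)]{grva}); for (3)$\Rightarrow$(4) it argues by contradiction at $\lambda = (2^{|T|})^{+}$, where stability is automatic, using Fact \ref{limits}.(3) to get uniqueness from the superlimit and then Theorem \ref{lim-big}.(2) to contradict $\kappa(\tilde{T}) > \aleph_0$. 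Without some replacement along these lines, your sketch of these directions does not go through.
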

\begin{proof}[Proof sketch]
(1) $\Rightarrow$ (4) By (1) and Lemma \ref{lim-til} $(Mod(\tilde{T}), \preceq)$ has uniqueness of limit models in a tail of cardinals. Then by Fact \ref{existence} $\tilde{T}$ is stable in a tail of cardinals. Since $\tilde{T}$ is a first-order theory, the tail has to begin at most in $2^{|T|}$. 

(4) $\Rightarrow$ (2) By Lemma \ref{stable} $\tilde{T}$ is superstable, then by \cite{shbook} (see \cite[1.1.(3)]{grva}) $\kappa(\tilde{T}) = \aleph_0$. The result follows from Fact \ref{existence} and Lemma \ref{lim-iso}.

(2) $\Rightarrow$ (1) Clear.

(4) $\Rightarrow$ (5)  Similar to (5) implies (6) of Theorem \ref{equivalent}.

(5) $\Rightarrow$ (4) The idea is to prove that $\tilde{T}$ is superstable and then by Lemma \ref{stable} the result would follow. To prove that $\tilde{T}$ is superstable, by \cite{shbook} (see \cite[1.1.(2)]{grva}),  it is enough to show that an elementary increasing chain of $\lambda$-saturated models in $\tilde{T}$ is $\lambda$-saturated. The proof is similar to (5) implies (6) of Theorem \ref{main} by using Lemma \ref{st=gst}.

(2) $\Rightarrow$ (3)  Similar to (3) implies (4) of Theorem \ref{equivalent}.

(3) $\Rightarrow$ (4) Assume for the sake of contradiction that (4) fails, then by Lemma \ref{stable} $\tilde{T}$ is not superstable.  Then by \cite{shbook} (see \cite[1.1.(3)]{grva}) $\kappa(\tilde{T}) > \aleph_0$. Let $\lambda= (2^{|T|})^{+}$. Observe that $\Ka$ is $\lambda$-stable since $\lambda^{|T|}=\lambda$, so by Fact \ref{limits}.(3) and (3) $\Ka$ has uniqueness of limit models in $\lambda$. Now, by Theorem \ref{lim-big}.(2), we know that the $(\lambda, \omega)$-limit model is not isomorphic to the $(\lambda, \lambda)$-limit model, which clearly gives us a contradiction. \end{proof}

\begin{remark}
Compared to \cite[1.3]{grva}, the above theorem improves the bounds where the \emph{nice properties} show up from $\beth_{(2^{|T|})^+}$ to $2^{|T|}$ in the case of classes of modules satisfying Hypothesis \ref{hyp}. It is worth pointing out that to obtain (3), (5) imply (1) we could have simply quoted \cite[1.3]{grva}. We decided to provide the proofs for those direction to make the paper more transparent and to show the deep connection between $\Ka$ and $\tilde{T}$. 
\end{remark}

Besides the difference in the bounds between Theorem \ref{equivalent} and Theorem \ref{equivalent2}. The techniques are also quite different, while the proof of the first theorem relies more on algebraic notions, the proof of the second theorem relies heavily on model theoretic methods.


\begin{thebibliography}{She01b}



\bibitem[Aus74]{auslander1}
Maurice Auslander,
\emph{Representation theory of artin algebras II}
Comm. Algebra \textbf{1} (1974), 269 -- 310.

\bibitem[Aus76]{auslander}
Maurice Auslander, \emph{Large modules over Artinian algebras} in Algebra topology and category theory, Academic Press (1976), 1--16.

\bibitem[BaMc82]{balmc}
John T. Baldwin and Ralph N. McKenzie, \emph{Counting models in universal {H}orn classes}, Algebra Universalis \textbf{15} (1982), 359--384.


\bibitem[Bal09]{baldwinbook09}
John Baldwin, \emph{Categoricity}, American Mathematical Society
(2009).


\bibitem[BCG+]{grp}
John Baldwin, Wesley Calvert, John Goodrick, Andres Villaveces and
Agatha Walczak-Typke, \emph{Abelian groups as aec's}. Preprint. URL: 
www.aimath.org/WWN/categoricity/abeliangroups\_10\_1\_3.tex.

\bibitem[BET07]{baldwine}
 John Baldwin, Paul Eklof and Jan Trlifaj, \emph{As an abstract
elementary class}, Annals of Pure and Applied Logic
\textbf{149}(2007), no. 1,25--39.

\bibitem[Bon14a]{extendingframes}
Will Boney, \emph{Tameness and extending frames}, Journal of Mathematical Logic \textbf{14} (2014), no. 2, 1450007, 27 pp.

\bibitem[BoVan]{bovan}
Will Boney and Monica VanDieren, \emph{Limit Models in Strictly Stable Abstract Elementary Classes}, Preprint. URL: https://arxiv.org/abs/1508.04717 .

\bibitem[Bum65]{bumby}
Richard T. Bumby, \emph{Modules which are isomorphic to submodules of
each other}, Archiv der Mathematik \textbf{16}(1965), 184--185.

\bibitem[Cha60]{cha}
S.U. Chase, \emph{Direct products of modules}
Trans. Amer. Math. Soc. \textbf{97} (1960), 457 -- 473.



\bibitem[Dru13]{dru}
Fred Drueck, \emph{Limit models, superlimit models and two cardinal problems in abstract elementary classes}, Ph. D. thesis, 2013. URL: http://homepages.math.uic.edu/~drueck/thesis.pdf 



\bibitem[Ekl71]{eklof}
Paul Eklof, \emph{ Homogeneous universal modules}, Mathematica
Scandinavica \textbf{29}(1971), 187--196. 

\bibitem[Fai66]{faith}
Carl Faith, \emph{Rings with ascending condition on annihilators},
 Nagoya Math. J. \textbf{27}(1966), no. 1, 179--191.



\bibitem[Gro1X]{ramibook}
Rami Grossberg, \emph{ A Course in Model Theory}, in Preparation,
201X.

\bibitem[Gro02]{grossberg2002}
Rami Grossberg, \emph{Classification theory for abstract elementary
classes},
  Logic and Algebra (Yi~Zhang, ed.), vol. 302, American Mathematical
Society,
  2002, 165--204.

\bibitem[GrSh86]{grsh}
Rami Grossberg and Saharon Shelah, \emph{A nonstructure theorem for an infinitary theory which  has  the  unsuperstability  property}, Illinois  Journal  of  Mathematics \textbf{30} (1986),no. 2, 364 --390.

\bibitem[GrVan06]{tamenessone}
Rami Grossberg and Monica VanDieren, \emph{Galois-stability for tame
abstract elementary classes}, Journal
  of Mathematical Logic \textbf{6} (2006), no.~1, 25--49.

\bibitem[GVV16]{grvavi}
Rami Grossberg, Monica VanDieren and Andres Villaveces, \emph{Uniqueness of limit models in classes with amalgamation}, Mathematical Logic Quarterly \textbf{62} (2016), 367--382.


\bibitem[GrVas17]{grva}
Rami Grossberg and Sebastien Vasey, \emph{Equivalent definitions of
superstability in tame abstract elementary classes}, The Journal of
Symbolic Logic \textbf{82} (2017), no. 4, 1387 -- 1408.

\bibitem[GKS18]{gks}
Pedro A. Guil Asensio, Berke Kalebogaz and Ashish K. Srivastava,
\emph{The Schr\"{o}der-Bernstein problem for modules}, 
Journal of Algebra \textbf{498} (2018),
153--164.


\bibitem[Har75]{harnik}
Victor Harnik, \emph{On the existence of saturated models of stable theories}, Proceedings of the American Mathematical Society \textbf{52} (1975), 361--367.


\bibitem[Hui00]{hui}
 Birge Huisgen-Zimmermann, \emph{Purity, algebraic compactness, direct sum decompositions, and representation type}, Infinite length modules (Bielefeld, 1998), 331--367, Trends Math., Birkhäuser, Basel, 2000.

\bibitem[KolSh96]{kosh}
Oren Kolman and Saharon Shelah, \emph{Categoricity of Theories in
$L_{\omega, \kappa}$ when $\kappa$ is a measurable cardinal. Part 1},
Fundamenta Mathematicae \textbf{151} (1996), 209--240.

\bibitem[KuMa]{kuma}
Thomas G. Kucera and Marcos Mazari-Armida, \emph{On universal modules with pure embeddings}, Accepted by Mathematical Logic Quarterly. URL: https://arxiv.org/abs/1903.00414


\bibitem[LRV]{lrv}
Michael Lieberman, Ji\v{r}\'{\i} Rosick\'{y}, and Sebastien Vasey, \emph{Weak factorization systems and stable independence}, Preprint. URL: https://arxiv.org/abs/1904.05691v2


\bibitem[Maz20]{maz}
 Marcos Mazari-Armida, \emph{Algebraic description of limit models in classes of abelian groups}, Annals of Pure and Applied Logic \textbf{171} (2020), no. 1, 102723. 
 
 \bibitem[Maz21]{maztor}
Marcos Mazari-Armida, \emph{A model theoretic solution to a problem of L\'{a}szl\'{o} Fuchs}, Accepted by Journal of Algebra, Journal of Algebra \textbf{567} (2021), 196--209.

\bibitem[Maz]{maz1}
Marcos Mazari-Armida, \emph{On superstability in the class of flat modules and perfect rings}, Accepted by Proceedings of AMS, 14 pages. URL: https://arxiv.org/abs/1910.08389


\bibitem[Maz1]{maz2}
Marcos Mazari-Armida, \emph{Some stable non-elementary classes of modules}, preprint, 21 pages. URL: https://arxiv.org/abs/2010.02918


\bibitem[MaVa18]{mv}
Marcos Mazari-Armida and Sebastien Vasey, \emph{Universal classes near $\aleph_1$}, The Journal of Symbolic Logic \textbf{83} (2018), no. 4, 1633–1643

\bibitem[Noe21]{noe}
 Emmy Noether, \emph{Idealtheorie in Ringbereichen}, Math. Ann. \textbf{83} (1921), 24 -- 66.


\bibitem[Pre84]{prest84}
Mike Prest, \emph{Rings of finite representation type and modules of finite Morley rank},
Journal of Algebra  \textbf{88} Issue 2 (1984), 502--533.

\bibitem[Pre88]{prest}
Mike Prest, \emph{Model Theory and Modules}, London Mathematical
Society Lecture Notes Series Vol. 130, Cambridge University Press,
Cambridge (1988).


\bibitem[Pre09]{prest09}
Mike Prest, \emph{ Purity, Spectra and Localisation} , Encyclopedia of Mathematics and its Applications, Vol. 121, Cambridge University Press  (2009) (xxviii+769pp). 


\bibitem[Sim77]{simson77}
Daniel Simson,
\emph{Pure semisimple categories and rings of finite representation type},
Journal of Algebra \textbf{48} vol. 2
(1977),  290--302; Corrigendum \textbf{67} (1980), 254 -- 256.

\bibitem[Sim81]{simson81}
Daniel Simson, \emph{Partial Coxeter functors and right pure semisimple hereditary rings},
Journal of Algebra \textbf{71} (1981), Issue 1, 195--218.

\bibitem[Sim00]{simson00}
 Daniel Simson, \emph{An Artin problem for division ring extensions and the pure semisimplicity conjecture. II}, Journal of Algebra \textbf{227} (2000), no. 2, 670--705.


\bibitem[Sh75]{shlaz}
Saharon Shelah, \emph{The lazy model-theoretician's guide to stability},  Logique Et Analyse \textbf{18} (1975), no. 71/72,  241--308. 

\bibitem[Sh87a]{sh88}
Saharon Shelah, \emph{Classification of nonelementary classes, {II}.
{A}bstract
  elementary classes}, Classification theory (John Baldwin, ed.)
(1987), 419--497.

\bibitem[Sh87b]{sh300}
Saharon Shelah, \emph{Universal classes}, Classification theory (John
Baldwin, ed.) (1987), 264--418.



\bibitem[Sh99]{sh394}
Saharon Shelah, \emph{Categoricity  for  abstract  classes  with  amalgamation},  Annals  of  Pure  andApplied Logic \textbf{98} (1999), no. 1, 261 -- 294.


\bibitem[Sh78]{shbook}
Saharon Shelah, \emph{Classification Theory and the Number of Non-Isomorphic Models}, North-Holland Publishing Company (1978).

\bibitem[Sh:h]{shelahaecbook}
Saharon Shelah, \emph{Classification Theory for Abstract Elementary Classes},
 vol. 1 \& 2, Mathematical Logic and Foundations, no. 18 \& 20, College
  Publications (2009).


\bibitem[ShVi99]{shvi}
Saharon Shelah and Andres Villaveces, \emph{Toward categoricity for
classes with no maximal models}, Annals of Pure and Applied Logic
\textbf{97} (1-3):1-25 (1999)

\bibitem[Tar54]{tarski} Alfred Tarski, \emph{Contributions to the theory of models {I}}, Indagationes Mathematicae \textbf{16} (1954), 572--581.

 \bibitem[Van06]{van06}
Monica VanDieren, \emph{Categoricity in abstract elementary classes with no maximal models}, Annals Pure Applied Logic \textbf{141} (2006), 108--147.

 \bibitem[Van16]{vand}
 Monica VanDieren, \emph{Superstability and symmetry}, Annals of Pure and Applied Logic \textbf{167} (2016), no.  12, 1171--1183.


\bibitem[Vas17]{vaseyu}
Sebastien Vasey,\emph{ Shelah's eventual categoricity conjecture in universal classes: part I}, Annals of Pure and Applied Logic \textbf{168} (2017), no. 9, 1609 - 1642. 

\bibitem[Vas18]{vaseyt}
Sebastien Vasey, \emph{Toward a stability theory of tame abstract elementary classes}, Journal of Mathematical Logic \textbf{18} (2018), no. 2, 1850009.


\bibitem[Vas19]{vasey18}
Sebastien Vasey, \emph{The categoricity spectrum of large abstract elementary classes}, Selecta Mathematica \textbf{25} (2019), no. 5, 65.


  \bibitem[Zie84]{ziegler}
Martin Ziegler, \emph{Model Theory of Modules}, Annals of Pure and
Applied Logic \textbf{26} (1984), 149 -- 213.

  \bibitem[Zim79]{zim}
Birge Zimmermann-Huisgen, \emph{Rings whose right modules are direct sums of indecomposable modules},
Proceedings of the American Mathematical Society \textbf{77} (2) (1979), 191--197

 \bibitem[Zim79]{zimm}
Wolfgang Zimmermann, \emph{Rein injektive direkte summen von moduln},
Communications in Algebra \textbf{5} (10) (1977), 1083 -- 1117. 

\end{thebibliography}

\end{document}